\newtheorem{thm}{Theorem}[section]
\newtheorem{cor}[thm]{Corollary}
\newtheorem{lem}[thm]{Lemma}
\newtheorem{prop}[thm]{Proposition}
\newtheorem{prop-def}[thm]{Proposition-Definition}
\theoremstyle{definition}
\newtheorem{Def}[thm]{Definition}
\theoremstyle{remark}
\newtheorem{rmk}[thm]{\bf Remark}
\newtheorem{exm}[thm]{\bf Example}
\numberwithin{equation}{section}
\newcommand{\ucr}[1]{\underset{#1}{\circ}}
\newcommand{\dbar}[1]{\overline{\overline{#1}}}
\newcommand{\bc}[1]{\underset{#1}{\boxtimes}}
\def \f {\varphi}
\def \Coder{\operatorname{Coder}}	
\def\N{\mathbb{N}}
\def\Sum{\sum\limits}
\def\la{\lambda}
\def \ot{\otimes}
\def \I{\mathcal{I}}
\def \ip{{\mathcal{P}}}
\def \Hom{\operatorname{Hom}}
\def \End{\mathcal{E}\mathsf{n}\mathsf{d}}
\def \Ker{\operatorname{Ker}}
\def \Id{\operatorname{Id}}
\def \1{\mathbbm 1}
\def \Z{\mathbb{Z}}
\def\al{\alpha}
\def\vert{\operatorname{Vert}}
\def\int{\operatorname{Int}}
\def\leav{\operatorname{Lv}}
\def\pnt{\operatorname{Parent}}
\def\Mod{\mbox{-{\rm Mod}}}
\def\lb{\operatorname{x}}
\def\mx{\mathcal{X}}
\def\ias{\mathcal{A}\mathsf{s}}
\def\C{\operatorname{C}}
\def\proj{\operatorname{proj}}
\def\Der{\operatorname{Der}}
\def\art{\operatorname{art}}
\def\M{M}
\def\F{\mathcal{F}}
\def \h {\operatorname{Hoch}}
\def \Ext {\operatorname{Ext}}
\def \1{\mathbf{1}}
\def \ka{\operatorname{K}}
\def \As{\mathcal{A}\mathsf{g}\mathsf{d}}
\def \Agd{\mathcal{A}\mathsf{g}\mathsf{d}}
\def\ic{\mathcal{C}}
\def \c{\circ}
\def \ra{\rightarrow}
\def\ac{{}^{\scriptstyle \textrm{\rm !`}}}
\def \ob{\mathrm{Ob}}
\def \d{\delta}
\def \gder{\operatorname{AsGDer}}
\def\Coder{\operatorname{Coder}}
\begin{document}
\title{Operad and cohomology of associative algebras with generalized derivation}

\author{Jiang-Nan Xu}
\address{(Xu) School of Mathematical Sciences, Anhui University, Hefei 230601, China}
\email{jnxu1@outlook.com}

\author{Yan-Hong Bao}
\address{(Bao) School of Mathematical Sciences, Anhui University, Hefei 230601, China}
\email{baoyh@ahu.edu.cn}

\subjclass[2010]{16R10, 16R99, 18M60.}


\keywords{Operad, Generalized derivation, Cohomology group, Formal deformation}

\begin{abstract}
An associative algebra with a generalized derivation is called an AsGDer triple. 
We introduce the operad that encodes AsGDer triples, and prove it is a Koszul operad. 
Using its Koszul dual cooperad, we introduce the homotopy version of AsGDer triples.
As an application, we construct the AsGDer cohomology theory for AsGDer triples,
and show that the formal deformation of an AsGDer triple is controlled by the AsGDer cohomology. 
\end{abstract}

\maketitle

\setcounter{section}{-1}
\section{Introduction}
\label{sec0}

Derivations are a fundamental invariant for various kinds of algebras,
which is closely related to cohomology theory, deformation theory, high ordered structure and so on.
As applications of algebra, derivations play an important role in differential geometry, 
control theory and gauge theories in quantum field theory.

In this paper, we mainly study associative algebra with a generalized derivation.
The notion of generalized derivations was firstly introduced by Bre$\check{\rm s}$ar in \cite{Br},
which was used to study a certain generalization of Posner's theorem \cite{Po}. 
Let $A$ be an associative algebra and $h$ a derivation of $A$. 
Recall that a linear map $\delta\colon A\to A$ is called a \textit{generalized $h$-derivation} (or \textit{generalized derivation} for short)
if $\delta(ab)=\delta(a)b+ah(b)$ for all $a, b\in A$. 
Such a triple $(A, h, \delta)$ of an associative algebra $A$ with  a derivation $h$ and a generalized $h$-derivation $\delta$ is called an AsGDer triple. 
Clearly, the notion of generalized derivations covers the notions of derivations and left multipliers (that is, a linear map $\varphi\colon A \to A$
satisfying $\varphi(ab)=\varphi(a)b$ for all $a, b$).
In fact, any generalized derivation can be written as a sum of left multiplier and a derivation. 
The algebraic properties of generalized derivations have been extensively investigated by numerous researchers, 
see \cite{AC, BO,HD,Hv,Le,Liu,LS, Na1, QKR},
and the concept of generalized derivations was further developed to encompass (e.g. higher order derivations) \cite{Na2}
and nonassociative settings \cite{LL}.
In \cite{HMPS}, the authors developed differential geometry (connection, curvature, etc.) based on generalized derivations.

Algebras with derivations are studied from operadic viewpoint, \cite{DL, Lo}. 
Operad theory originated from algebraic topology and have been widely used in algebra, combinatorics, geometry, mathematical 
physics and topology \cite{LV,Gi,MSS}. 
Operad is an algebraic object that models the operations with multiple inputs and one output.
An algebra of a certain type is usually defined by generating operations and relations, and they can be encoded by operads.
For example, associative algebras, Lie algebras and Poisson algebras are encoded by the operad $\mathcal{A}\mathsf{s}$, $\mathcal{L}\mathsf{ie}$ and $\mathcal{P}\mathsf{ois}$ respectively.  
This  motivates us to use operad theory to study AsGDer triples.

\begin{Def}\label{0-def-operad Agd}
	Let $(E, R)$ be a quadratic data, where $E$ is an $\N$-graded vector space with $E(1)=\Bbbk h\oplus \Bbbk \delta$, $E(2)=\Bbbk \mu$ and $E(n)=0$ for all $n\neq 1, 2$, and $R$ is a sub-$\N$-graded vector space of $\F(E)$ spanned by 
	\begin{align*}
		\mu\ucr 1\mu- \mu\ucr 2\mu, \quad 
		h\ucr 1 \mu -\mu\ucr 1 h-\mu\ucr 2 h\quad {\rm and} \quad 
		\delta\ucr 1 \mu - \mu\ucr 1 \delta-\mu\ucr 2 h.	
	\end{align*}
	Then the quadratic operad associated to the quadratic data $(E, R)$ is called \textit{AsGDer operad}, denoted by $\Agd$.
\end{Def}

Koszul duality theory was generalized from associative algebras to operads  in \cite{GK,GJ}. 
By using the rewriting method, we prove that the operad $\Agd$ is Koszul, see Theorem \ref{2-thm-Adg is Koszul}.
By direct computing, we obtain the explicit description of the Koszul dual operad of $\Agd$.

\begin{prop}\label{0-prop-def of koszul dual operad of Agd}
	Let $\Agd^!$ be the Koszul dual operad of the AsGDer operad $\Agd$. Then 
	\[\As^!(n)= \Bbbk\bar\mu_n \oplus \Bbbk \bar h_n\oplus\Bbbk \bar\delta_n\]
	with $|\bar\mu_{n}|=0$ and $|\bar h_n|=|\bar\delta_n|=-1$ for $n\ge 1$, $\mu_1\in \Agd(1)$ is the identity, and the partial composition is given by
	\begin{equation*}
		\begin{array}{lll}
			\bar\mu_n\ucr i\bar\mu_m=\bar\mu_{m+n-1}, & \bar h_n\ucr i\bar\mu_m=\bar h_{m+n-1}, & \bar\delta_n\ucr i\bar\mu_m=\bar\delta_{m+n-1},\\
			\bar\mu_n\ucr i \bar h_m=\begin{cases}
				\bar h_{m+n-1}, &i=1,\\
				\bar\delta_{m+n-1}+\bar h_{m+n-1}, & 2\leq i\leq n,
			\end{cases} & \bar h_n\ucr i\bar h_m=0, & \bar\delta_n\ucr i\bar h_m=0,\\
			\bar\mu_n\ucr i\bar\delta_m=\begin{cases}
				\bar\delta_{m+n-1}, &i=1,\\
				0, &2\leq i\leq n,
			\end{cases} & \bar h_n\ucr i\bar\delta_m=0, & \bar\delta_n\ucr i\bar\delta_m=0.
		\end{array}
	\end{equation*}
\end{prop}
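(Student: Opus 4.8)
The plan is to apply directly the construction of the Koszul dual of a quadratic non-symmetric operad. Writing the defining data of $\Agd$ as $(E,R)$, the Koszul dual is the quadratic operad on the data $(E^\vee,R^\perp)$, where $E^\vee$ is the (degree-shifted) arity-wise linear dual of $E$ and $R^\perp\subseteq\F(E^\vee)^{(2)}$ is the annihilator of $R$ under the natural pairing on the weight-$2$ part of the free operad. Since $E$ is concentrated in arities $1$ and $2$, so is $E^\vee$; I take $\bar h,\bar\delta$ dual to $h,\delta$ in $E^\vee(1)$ and $\bar\mu$ dual to $\mu$ in $E^\vee(2)$. The operadic desuspension $\mathcal S^{-1}$ places $\bar\mu$ in degree $0$ and $\bar h,\bar\delta$ in degree $-1$, and since $\bar\mu_n$ will be a product of $n-1$ degree-$0$ elements while $\bar h_n,\bar\delta_n$ carry a single extra unary factor, this already yields the stated degrees $|\bar\mu_n|=0$ and $|\bar h_n|=|\bar\delta_n|=-1$.

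The heart of the argument is the computation of $R^\perp$, which I would carry out in weight $2$, one arity at a time. Because the generators pair diagonally ($\langle h,\bar h\rangle=\langle\delta,\bar\delta\rangle=\langle\mu,\bar\mu\rangle=1$ and all mixed pairings vanish), the induced pairing between $\F(E)^{(2)}$ and $\F(E^\vee)^{(2)}$ is diagonal with respect to matching planar tree shapes, the only delicate point being the relative sign that $\mathcal S^{-1}$ assigns to the two shapes ``unary-over-binary'' and ``binary-over-unary''. In arity $1$ there is no relation in $R$, so $R^\perp$ is the entire $4$-dimensional space spanned by $\bar h\ucr1\bar h,\ \bar h\ucr1\bar\delta,\ \bar\delta\ucr1\bar h,\ \bar\delta\ucr1\bar\delta$, which gives the four vanishing products among the unary duals. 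In arity $3$ the lone relation $\mu\ucr1\mu-\mu\ucr2\mu$ has the one-dimensional annihilator $\bar\mu\ucr1\bar\mu-\bar\mu\ucr2\bar\mu$, reproducing the self-duality of $\ias$ and forcing $\bar\mu\ucr1\bar\mu=\bar\mu\ucr2\bar\mu$. In arity $2$ the $6$-dimensional space spanned by $h\ucr1\mu,\ \delta\ucr1\mu,\ \mu\ucr1 h,\ \mu\ucr2 h,\ \mu\ucr1\delta,\ \mu\ucr2\delta$ carries the two relations $r_h=h\ucr1\mu-\mu\ucr1 h-\mu\ucr2 h$ and $r_\delta=\delta\ucr1\mu-\mu\ucr1\delta-\mu\ucr2 h$; imposing $\langle r_h,-\rangle=\langle r_\delta,-\rangle=0$ gives a $4$-dimensional $R^\perp$, from which I would read off the four relations $\bar\mu\ucr1\bar h=\bar h\ucr1\bar\mu$, $\bar\mu\ucr2\bar h=\bar h\ucr1\bar\mu+\bar\delta\ucr1\bar\mu$, $\bar\mu\ucr1\bar\delta=\bar\delta\ucr1\bar\mu$ and $\bar\mu\ucr2\bar\delta=0$.

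Finally, from the presentation $(E^\vee,R^\perp)$ I would determine every component $\Agd^!(n)$ and every partial composite by induction on arity. Define $\bar\mu_n$ as the composite of $n-1$ copies of $\bar\mu$ (unambiguous by the arity-$3$ relation) and set $\bar h_n=\bar h\ucr1\bar\mu_n$, $\bar\delta_n=\bar\delta\ucr1\bar\mu_n$. The arity-$1$ relations annihilate any two stacked unary duals, while the arity-$2$ relations let one slide a unary dual across a $\bar\mu$; together they rewrite an arbitrary weight-graded planar composite into exactly one of $\bar\mu_n$, $\bar h_n$, $\bar\delta_n$, proving $\dim\Agd^!(n)=3$ and, by tracking how a unary dual crosses a product, yielding the full composition table. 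I expect the main obstacle to be twofold: first, fixing the desuspension signs so that $R^\perp$ emerges exactly as above rather than with spurious sign errors; and second, establishing the asymmetric formulas for $\bar\mu_n\ucr i\bar h_m$ and $\bar\mu_n\ucr i\bar\delta_m$, whose split between $i=1$ and $2\le i\le n$ is inherited from the asymmetry of the defining relation $\delta\ucr1\mu-\mu\ucr1\delta-\mu\ucr2 h$ (whose second slot receives $h$, not $\delta$) and must be propagated consistently through the inductive grafting of $\bar\mu_n$.
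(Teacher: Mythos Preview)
Your approach is essentially the paper's: first compute the quadratic presentation $(E^\vee,R^\perp)$ of $\Agd^!$ (this is Theorem~\ref{2-thm-Koszul dual of Agd}, proved exactly as you outline, arity by arity via the pairing), then identify the normal forms $\bar\mu_n,\bar h_n,\bar\delta_n$ and read off the composition table.

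There is one genuine gap, however. Your rewriting argument (``slide the unary dual across a $\bar\mu$, kill any two stacked unaries'') only shows that every tree monomial reduces to one of $\bar\mu_n,\bar h_n,\bar\delta_n$, hence $\dim\Agd^!(n)\le 3$. It does \emph{not} show these three are linearly independent in $\F(E^\vee)/(R^\perp)$: different rewriting paths from the same monomial could in principle produce different linear combinations, generating new relations. The paper closes this gap by invoking the operadic Diamond Lemma: with the order $\bar\delta<\bar h<\bar\mu$ one checks that all critical tree monomials of the rewriting system \eqref{2-eq-generate relation of dual operad} are confluent, so by Proposition~\ref{1.3-prop-basis of koszul operad} the reduced monomials form an honest basis. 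You should either run that confluence check, or alternatively verify directly that the composition table in the statement satisfies the operad axioms (OP1)--(OP2), thereby exhibiting a $3$-dimensional quotient of $\F(E^\vee)/(R^\perp)$ and forcing equality. This is the step that actually requires work; the sign bookkeeping and the $i=1$ versus $i\ge 2$ split that you flag as the main obstacles are, by comparison, routine once the presentation \eqref{2-eq-generate relation of dual operad} is in hand.
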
  

When an algebra is encoded by a Koszul operad,  there are  general ways to construct the cohomology complex and the corresponding higher homotopy structure \cite{Ba1,Ba2, LV}. Firstly, we give an explicit expression of Koszul dual cooperad $\Agd^{\ac}$.

\begin{prop}\label{0-prop-def of Koszul dual cooperad }
	Let $\Agd^{\ac}$ be the Koszul dual cooperad of $\Agd$. Then
	\[
	\Agd^{\ac}(n)= \Bbbk \mu^c_n\oplus \Bbbk h^c_n \oplus \Bbbk  \d^c_n \quad ({\rm for} ~ n\geq 1), 	
	\]
	with $|\mu^c_n|=n-1$, $|h^c_n|=|\d^c_n|=n$, the counit $\varepsilon$ is given by $\varepsilon(\mu^c_1)=\1\in \I$,
	and the infinitesimal decomposition is given by
	\begin{align*}
		\Delta_{(1)}( \mu^c_n)=&\sum_{p+q+r=n}(-1)^{r(q-1)}\mu^c_{p+r+1}\bc {p+1}\mu^c_{q},\\
		\Delta_{(1)}(h^c_n)=&\sum_{p+q+r=n}
		(-1)^{r(q-1)}\mu^c_{p+r+1}\bc {p+1} h^c_{q}
		+\sum_{p+q+r=n}(-1)^{(r+1)(q-1)} h^c_{p+r+1}\bc {p+1}\mu^c_{q},\\	
		\Delta_{(1)}( \d^c_n)=& \sum_{p+q+r=n}(-1)^{(r+1)(q-1)} \d^c_{p+r+1}\bc {p+1}\mu^c_{q}
		+\sum_{q+r=n}(-1)^{r(q-1)}\mu^c_{r+1}\bc {1}\d^c_{q}\\
		&+\sum_{p+q+r=n\atop p\geq1}(-1)^{r(q-1)}\mu^c_{p+r+1}\bc {p+1} h^c_{q}.
	\end{align*}
\end{prop}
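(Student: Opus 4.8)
The plan is to derive the cooperad structure on $\Agd^{\ac}$ by dualizing the operad structure on $\Agd^!$ computed in Proposition \ref{0-prop-def of koszul dual operad of Agd}, using the standard relation between the Koszul dual operad and the Koszul dual cooperad \cite{LV}. Recall that $\Agd^{\ac}=\mathcal C(sE,s^2R)$, and that for a finitely generated quadratic operad there is a natural isomorphism of graded $\N$-modules $\Agd^{\ac}(n)\cong s^{\,n-1}\bigl(\Agd^!(n)\bigr)^{*}$, under which the infinitesimal decomposition $\Delta_{(1)}$ of $\Agd^{\ac}$ is the transpose of the infinitesimal composite $\gamma_{(1)}$ of $\Agd^!$, twisted by the Koszul signs coming from the operadic suspension. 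I would first set up this isomorphism explicitly and define $\mu^c_n,h^c_n,\d^c_n$ to be the suspensions of the linear duals of the basis elements $\bar\mu_n,\bar h_n,\bar\delta_n$ of $\Agd^!(n)$.

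Next I would read off the degrees. Since each $x^c_n$ is obtained from $\bar x_n$ by applying $s^{n-1}$ (degree $+(n-1)$) and passing to the dual (which negates the internal degree), one gets $|x^c_n|=(n-1)-|\bar x_n|$; together with $|\bar\mu_n|=0$ and $|\bar h_n|=|\bar\delta_n|=-1$ from Proposition \ref{0-prop-def of koszul dual operad of Agd}, this yields $|\mu^c_n|=n-1$ and $|h^c_n|=|\d^c_n|=n$, as claimed. The counit is dual to the operadic unit $\mu_1\in\Agd^!(1)$, so $\varepsilon(\mu^c_1)=\1\in\I$, and coassociativity of $\Delta_{(1)}$ is then automatic, being transported from associativity of $\gamma_{(1)}$.

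Then I would compute $\Delta_{(1)}$ term by term by transposing each partial composition of Proposition \ref{0-prop-def of koszul dual operad of Agd}. Each relation $\bar a_k\ucr i \bar b_l=\bar c_{k+l-1}$ contributes, after splitting $n=p+q+r$ with an inner arity-$q$ operation grafted at position $p+1$, a summand $\pm\, a^c_{p+r+1}\bc{p+1}b^c_q$ to $\Delta_{(1)}(c^c_n)$; collecting these should give the three displayed formulas. The one conceptual subtlety is that some partial compositions mix the generators: because $\bar\mu_n\ucr i\bar h_m=\bar h_{m+n-1}$ for $i=1$ but $\bar\mu_n\ucr i\bar h_m=\bar\delta_{m+n-1}+\bar h_{m+n-1}$ for $i\geq2$, the decomposition $\Delta_{(1)}(h^c_n)$ acquires a $\mu^c\bc{p+1}h^c$ summand for every grafting position, while the $\bar\delta$-component, present only for $i\ge2$, feeds the third sum of $\Delta_{(1)}(\d^c_n)$ and explains the restriction $p\ge1$ there; similarly $\bar\mu_n\ucr i\bar\delta_m$ is nonzero only for $i=1$, which forces the outer index $1$ in the second sum of $\Delta_{(1)}(\d^c_n)$.

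The main obstacle I expect is the careful bookkeeping of the Koszul signs. Writing the operadic suspension sign for grafting an inner arity-$q$ operation with $r$ inputs to its right as $(-1)^{r(q-1)}$ accounts for the coefficient whenever the outer operation is $\mu^c$ (even, of internal degree $0$). When the outer operation is $h^c$ or $\d^c$ — the odd generators, of internal degree $-1$ in $\Agd^!$ — commuting the desuspension past the inner factor produces an extra sign $(-1)^{q-1}$, which turns the exponent into $(r+1)(q-1)$; matching these two cases against the eight partial compositions of Proposition \ref{0-prop-def of koszul dual operad of Agd} is the delicate step, and I would verify it on low-arity cases before asserting the general formula.
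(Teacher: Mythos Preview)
Your proposal is correct and follows essentially the same route as the paper: the paper also obtains $\Agd^{\ac}$ by dualizing Proposition~\ref{0-prop-def of koszul dual operad of Agd} and introducing the suspension signs, packaging this as the isomorphism $\Agd^{\ac}\cong \mathcal{S}^{-c}\underset{\rm H}{\otimes}(\Agd^!)^*$ and first computing the (sign-free) infinitesimal decomposition of $(\Agd^!)^*$ before tensoring with $\mathcal{S}^{-c}$. Your sign analysis --- the base $(-1)^{r(q-1)}$ from the suspension cooperad, together with the extra $(-1)^{q-1}$ when the outer factor is the odd generator $\bar h^*$ or $\bar\delta^*$ --- is exactly the Koszul sign arising from the Hadamard product and matches the paper's computation.
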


Similar to $A_\infty$-algebras and $L_\infty$-algebras,  we introduce the definition of homotopy AsGDer triples,
and show it coincides with that induced by a square-zero coderivation on the cofree $\As^{\ac}$-coalgebra.

\begin{Def}
	Let $A$ be a graded vector space. An $\Agd_{\infty}$-\textit{algebra structure} on $A$ consists of collections of operations $\{m_n,h_n,\delta_n|n\geq1\}$ with  
	\[m_n\colon A^{\ot n}\ra A,~ |m_n|=n-2,\quad
	h_n\colon A^{\ot n}\ra A,~ |h_n|=n-1,\quad
	\d_n\colon A^{\ot n}\ra A,~ |\d_n|=n-1,\]
	satisfying the following relations
	\begin{align*}
		&\sum_{p+q+r=n}(-1)^{p+qr}m_{p+r+1}\ucr {p+1}m_{q}=0,\\
		&\sum_{p+q+r=n}(-1)^{p+qr}m_{p+r+1}\ucr {p+1}h_{q}
		+\sum_{p+q+r=n}(-1)^{p+q(r+1)}h_{p+r+1}\ucr {p+1}m_{q}=0,\\	
		&\sum_{p+q+r=n\atop p\geq1}(-1)^{p+qr}m_{p+r+1}\ucr {p+1}h_{q}
		+\sum_{q+r=n}(-1)^{qr}m_{r+1}\ucr {1}\delta_{q}\\
		&+\sum_{p+q+r=n}(-1)^{p+q(r+1)}\d_{p+r+1}\ucr {p+1}m_{q}=0
	\end{align*}
	for $n\geq1$.
	The quadruple $(A, m_n, h_n , \delta_n)$ is called an \textit{$\Agd_\infty$-algebra}, or a \textit{homotopy AsGDer triple}.
\end{Def}

\begin{thm}\label{0-thm-Agd_infty str=sq zero coder}
	Let $A$ be a graded vector space.	An $\As_{\infty}$-algebra structure on  $A$ is equivalent to a square-zero coderivation on the cofree $\As^{\ac}$-coalgebra $\As^{\ac}(A)$.
\end{thm}

Cohomology theories for various kinds of algebras have been investigated by many mathematicians.
The deformation theory of algebraic structures began with the famous work of Gerstenhaber for associative algebra.
Recently, the simultaneous deformation of algebraic operation and their additional structure, such as derivations and algebra morphisms,
are widely studied. 
For examples, Lie algebras with derivations are studied from cohomological viewpoint in \cite{TFS}. 
Associative algebras and Leibniz algebras with a derivation were studied in \cite{DM, Da}. 

By using the graded Lie algebra structure on the cofree coalgebra over the Koszul dual cooperad $\Agd^{\ac}$, 
we introduce the AsGDer-complex and show that it controls the formal deformation
of AsGDer triples.
\begin{Def}
Let $(A, h, \delta)$ be an AsGDer triple and $\mu$ the multiplication of $A$. 
 Denote the spaces
\begin{itemize}
	\item $\C^n_{\gder}(A, A)\colon =0$ for $n\le 0$,
	\item ${\C}^1_{\gder}(A, A)\colon=\Hom(A,A)$,
	\item $C^n_{\gder}(A, A)\colon=\Hom(A^{\otimes n},A)\oplus\Hom(A^{\otimes,n-1},A)\oplus\Hom(A^{\otimes, n-1},A)$ for $n>1$,
\end{itemize} 
and the linear map 
\begin{align*}
	\partial^n\colon C^n_{\gder}(A,A)&\to C^{n+1}_{\gder}(A,A)\\
	\partial^n(f)\colon&=(-1)^{n-1}[(\mu,h,\delta),~f]_{L_{\Agd}(A)}
\end{align*}
where $L_{\Agd}(A)$ is a graded Lie algebra with the graded Lie bracket induced by one of the graded Lie algebra $\Coder(\Agd^{\ac}(A))$. 
Then the complex $(\C_{\gder}^\bullet(A, A), \partial^\bullet)$ is called the \textit{AsGDer complex} for the AsGDer triple $(A, h, \delta)$, 
and its $n$-th cohomology group is called the \textit{$n$-th AsGDer cohomology group} of $(A, h, \delta)$, denoted by $H^n_{\gder}(A, A)$, for $n \geq 1$.
\end{Def}

\begin{thm}\label{0-thm-deformation thm}
Let $(A, h, \delta)$ be an AsGDer triple and $H_{\gder}^n(A, A)$ be the $n$-th AsGDer cohomology group of $(A, h, \delta)$.
\begin{enumerate}
	\item[(i)] The triple $(\mu_1, h_1, \delta_1)$ is an infinitesimal deformation of $(A, h, \delta)$ if and only if 
	$(\mu_1, h_1, \delta_1)$ is a $2$-cocycle in the AsGDer complex of $A$. In particular, if $H_{\gder}^2(A, A)=0$,
	then $(A, h, \delta)$ has no nontrivial formal deformation.
	
	\item[(ii)] For each $n\ge 2$, if the deformation equations {\rm (DE$_k$-1) }, {\rm (DE$_k$-2) } and {\rm (DE$_k$-3) } hold for $k=1, \cdots, n-1$,
	then $(\ob'_n, \ob''_n, \ob'''_n)$ is a $3$-cocycle in the AsGDer complex, and hence $H_{\gder}^3(A, A)$ is the obstruction cocycle. In particular,
	if $H_{\gder}^3(A, A)=0$, then all the obstructions vanish.
\end{enumerate}	
\end{thm}

The paper is organized as follows. 
In Section 1, we recall some basic notions and results about (co)operads and generalized derivations.
In Section 2, we first construct the operad $\Agd$ that encodes the category of AsGDer triples, 
and show that $\Agd$ is a Koszul operad. The Koszul dual operad $\Agd^!$ and the free $\Agd$-algebras are calculated explicitly. 
In Section 3,  we give an explicit description of the Koszul dual cooperad $\Agd^{\ac}$,
and use it to introduce  the definition of homotopy $\Agd$-algebras.
In Section 4, we construct the cohomology complex for an AsGDer triple by using the cooperad $\Agd^{\ac}$,
and show that it controls the formal deformation of an AsGDer triple. 

Throughout this paper, $\Bbbk$ is a fixed field of characteristic
zero, and (co)operads are $\Bbbk$-linear. All unadorned
$\ot$ will be $\ot_\Bbbk$ and  all unadorned
$\Hom$ will be $\Hom_\Bbbk$. For an element $\mu$ in a graded vector space, $|\mu|$ denotes the degree of $\mu$.

\noindent\textbf{Acknowledgments}.
Y.-H. Bao was partially 
supported by the National Natural Science Foundation of 
China (Grant No. 12371015) and the Science Fundation for 
Distinguished Young Scholars of Anhui Province (No. 2108085J01).

\section{Preliminaries}
\label{sec1}

In this section, we recall some notions, notations and facts about (co)operads and generalized derivations of  associative algebras. For more details about operads, cooperads and Koszul duality, see \cite{LV}.  

\subsection{Operads and cooperads}
Let $\N\Mod$ be the category of $\N$-graded vector spaces. 
For $\N$-graded vector spaces $\mathcal{P}=\{\mathcal{P}(n)\}_{n\ge 0}$ and $\mathcal{Q}=\{\mathcal{Q}(n)\}_{n\ge 0}$, the composition product $\mathcal{P}\circ \mathcal{Q}$ is given by 
\[
(\mathcal{P}\circ \mathcal{Q})(n):=\bigoplus_{k\ge 1} \mathcal{P}(k)\otimes \left(\bigoplus_{i_1+\cdots+i_k=n} \mathcal{Q}(i_1)\otimes\cdots \otimes \mathcal{Q}(i_k)\right).
\]
Note that the composition product satisfies the associativity, and the triple $(\N\Mod, \circ, \I)$ form a monoidal category, where 
$\I$ is the unit object with $\I(1)=\Bbbk$ and $\I(n)=0$ for all $n\neq 1$. 
For an element $\theta\in \ip(n)$, we say that the arity of $\theta$ is $n$, denoted by $\art(\theta)=n$. 

\begin{Def}\label{def-ns operad}
A \textit{nonsymmetric operad} $\ip$ is a monoid $P$ in the monoidal category $(\N\Mod, \circ, \I)$, that is, there is a so-called \textit{composition map} $\gamma\colon \ip\circ \ip\to \ip$ and a \textit{unit map} $\eta\colon\I\to \ip$ satisfying the associativity and the unitality.

A \textit{nonsymmetric cooperad} is a comonoid $\ic$ in the monoidal category $(\N\Mod, \circ, \I)$, that is, there is a \textit{decomposition map} $\Delta\colon \ic\to \ic\circ \ic$, and a {counit} map $\varepsilon\colon \ic\to \I$ satisfying coassociativity and counitality.
\end{Def} 
A nonsymmetric operad (ns operad for short) is also called a \textit{non-$\Sigma$ operad} or \textit{plain operad}. 
In this paper,  we will also use  the partial definition of a ns 
operad:

\begin{Def}	\label{xxdef-partial def operad}
A \textit{nonsymmetric operad} consists of the following data:
\begin{enumerate}
\item[(i)]
an $\N$-graded vector space $(\ip(n))_{n\geq 0}$, whose elements are
called \textit{$n$-ary operations},
\item[(ii)]
an element $\1\in \ip(1)$, called the \textit{identity},
\item[(iii)]
for all integers $m\ge 1$, $n \ge 0$, and $1\le i\le m$, a partial composition map
\[-\ucr{i}-\colon \ip(m) \otimes \ip(n) \to \ip(m+n-1), \]
\end{enumerate}
satisfying the following axioms:
\begin{enumerate}
\item[(OP1)] (Identity) 		
for any $\theta\in \ip(n)$ and $1\leq i\leq n$,
\[\theta\ucr{i}\1 = \theta =\1\ucr{1}\theta;
\]		
\item[(OP2)] (Associativity) 		
for any $\lambda \in \ip(l)$, $\mu\in \ip(m)$ and $\nu\in \ip(n)$,
\[\begin{cases}
	(\la\ucr{i}\mu)\ucr{i-1+j}\nu
	=\la\ucr{i}(\mu\ucr{j}\nu),
	& 1\le i\le l, 1\le j\le m,\\
	(\la \ucr{i}\mu)\ucr{k-1+m}\nu
	=(\la\ucr{k}\nu)\ucr{i}\mu,
	& 1\le i<k\le l.
\end{cases}
\]
\end{enumerate}
\end{Def} 
If $\ip(0)=0$ (respectively, $\ic(0)=0$), the operad $\ip$ is called a \textit{reduced operad} (respectively, \textit{reduced cooperad}). 
Throughout this paper, all operads and cooperads are reduced. 
\begin{Def}\cite{LV}\label{def-Hadamrd tensor prod}
Let $\ip$ and $\ip'$ be two ns operads, the  \textit{Hadamard tensor product} $\ip\underset{\rm H}{\ot}\ip'$ is the operad defined as follows: 
\[(\ip\underset{\rm H}{\ot}\ip')(n)=\ip(n)\ot \ip'(n),\] 
and for $\mu\ot \mu'\in(\ip\underset{\rm H}{\ot}\ip')(n)$,  $\nu\ot \nu'\in(\ip\underset{\rm H}{\ot}\ip')(m)$, the partial composition is given by
$$(\mu\ot \mu')\ucr i(\nu\ot \nu')=(\mu\ucr i \nu)\ot(\mu'\ucr i \nu').$$
 Similarly, the  Hadamard tensor product of two ns cooperads $(\ic,\Delta_{\ic},\varepsilon_{\ic})$ and $(\ic',\Delta_{\ic'},\varepsilon_{\ic'})$ is the ns cooperad $\ic\underset{\rm H}{\ot}\ic'$ with $(\ic\underset{\rm H}{\ot}\ic')(n)=\ic(n)\ot\ic'(n)$ and the  coproduct  given by the composite map 
	\[\ic\underset{\rm H}{\ot}\ic' \xrightarrow{\Delta_{\ic}{\ot}\Delta_{\ic'}}  (\ic \circ \ic)\underset{{\rm H}}{\ot}(\ic' \circ \ic')\twoheadrightarrow (\ic\underset{\rm H}{\ot}\ic')\c (\ic\underset{\rm H}{\ot}\ic').  \]
\end{Def}
\begin{exm}
Let $V$ be a  vector space.
The \textit{endomorphism operad} $\End_V$ of $V$ is defined as follows:
$\End_V(n)\colon=\Hom(V^{\ot n}, V)$ for all $n\ge 0$,  
the identity $\1:=\Id_V\in \mathcal{E}nd_V(1)$ and the partial composition $-\ucr{i}-$ is given by
	\begin{align}\label{1-1-def-partial composition of map}\tag{E1.1.1}
		(f\ucr{i}g)(v_1,v_2,\cdots,v_{m+n-1})=f(v_1,\cdots,v_{i-1},g(v_{i},\cdots,v_{m+i-1}),v_{m+i},\cdots,v_{m+n-1}).
\end{align}
for  $f\in \End_V(m)$, $g\in \End_V(n)$ and $1\le i\le m$. 
\end{exm}

\subsection{Algebra over an operad and coalgebra over a cooperad}

Let $(\ip, \gamma, \eta)$ be a ns operad. 
Recall that 
a \textit{$\ip$-algebra} is a vector space $A$ equipped with a linear map
$\gamma_A \colon \ip(A)\to A$, where $\ip(A)=\bigoplus_{n} \ip(n)\otimes A^{\otimes n}$, such that the following diagrams
commute:
\[\xymatrix{
  \ip(\ip(A)) \ar[r]^-{\ip(\gamma_A)}\ar[d]_-{\gamma(A)}  & \ip(A)\ar[d]^{\gamma_A}\\
 \ip(A) \ar[r]^-{\gamma_A} & A,
} \quad\quad \quad
\xymatrix{
\I(A)\ar@{=}[dr] \ar[r]^{\eta(A)} & \ip(A) \ar[d]^{\gamma_A}\\
& A.
}\]
Equivalently, there exists a morphism $\widetilde{\gamma}_A\colon \ip \to \End_A$ of operads. 
In this case, for any $\theta\in \ip(n)$ and $n\ge 0$,  $\widetilde{\gamma}_A(\theta)\in \Hom(A^{\otimes n}, A)$
gives an $n$-ary operation of $A$, still denoted by $\theta$ for brevity. 

Let $V$ be a vector space. Recall that a \textit{free} $\ip$-algebra is the vector space $\ip(V)$ equipped with a $\ip$-algebra structure 
\[\gamma_{\ip(V)}\colon=\gamma(V)\colon \ip(\ip(V)) \to \ip(V).\]

\begin{exm}
Suppose that $\mathcal{A}\mathsf{s}$ is a reduced ns operad with $\mathcal{A}\mathsf{s}(n)=\Bbbk \mu_n$ for all $n\ge 1$ and the partial composition $-\ucr i-$ is given by
$$\mu_m\ucr i\mu_n=\mu_{m+n-1}$$
for all $1\le i\le m$ and $n \ge 1$.
Clearly, $\mu_1$ is the identity of $\mathcal{A}\mathsf{s}$.  
The operad $\mathcal{A}\mathsf{s}$ is called the \textit{non-unital associative operad},
which encodes the category of non-unital associative algebras. In fact, 
 $A$ is an $\mathcal{A}\mathsf{s}$-algebra if and only if $A$ is a non-unital associative algebra.
\end{exm}

Let $(\ic,\Delta,\varepsilon)$ be a ns cooperad.
Recall that a \textit{${\ic}$-coalgebra} is a vector space $C$ equipped with a map $\Delta_{C}\colon C\to \widehat{\ic}(C)$, where $\widehat{\ic}(C)\colon=\prod_{n}\ic(n)\otimes C^{\otimes n}$,
such that the following diagrams commute:
\[\xymatrix{
	C \ar[d]_-{\Delta_{C}} \ar[r]^-{\Delta_{C}} & \widehat{\ic}(C) \ar[d]^-{\widehat{\ic}(\Delta_{C})} \\
	\widehat{\ic}(C) \ar[r]_-{\Delta(\Id_C)} & \widehat{\ic}(\widehat{\ic}(C)),
}\qquad\qquad\qquad
\xymatrix{
	C \ar@{=}[dr] \ar[r]^-{\Delta_{C}} & \widehat{\ic}(C) \ar[d]^-{\varepsilon(C)} \\
	& \I(C).
}\]
For convenience, we denote by $[\theta|v_1, \cdots, v_n]$ an element $\theta\otimes v_1 \otimes \cdots \ot v_n\in \ic(n)\otimes C^{\ot n}$. 

Let $V$ be a vector space.  Recall that the \textit{cofree} $\ic$-coalgebra is the vector space $\ic(V)$ equipped with a $\ic$-coalgebra structure
\begin{align*}
\Delta_{\ic(V)}\colon \ic(V) \to \prod_n \ic(n)\otimes (\ic(V))^{\ot n},
\end{align*}
defined by
\[\Delta_{\ic(V)}([\theta| v_1, \cdots, v_n])=\prod [c_i|[c_{i_1}|v_1, \cdots, v_{n_{i_1}}],\cdots, [c_{i_s}|v_{n_{i_1}+\cdots+n_{i_{s-1}}+1}, \cdots, v_{n}]]  
\]
for $\theta\in \ic(n)$, $v_1,\cdots,v_n \in V$ and $\Delta(\theta)=\prod \theta_i\otimes(\theta_{i_1}\ot\cdots\ot \theta_{i_s})$ with $\theta_{i_j}\in \ic(n_{i_j})$, $j=1, \cdots, s$.

Let $(\mathcal{C},\Delta,\varepsilon)$ be a  ns cooperad. 
Denote by $\mathcal{C}^*$  the linear dual operad of the cooperad $\mathcal{C}$.
By definition, $\mathcal{C}^\ast(n)=\mathcal{C}(n)^*=\Hom(\mathcal{C}(n),\Bbbk)$, 
the element $\varepsilon\in \mathcal{C}^\ast(1)$ is just the identity of  $\mathcal{C}^*$,
and the composition map 
\[\mathcal{C}^\ast(n) \otimes \mathcal{C}^\ast(k_1) \otimes \cdots\otimes \mathcal{C}^\ast(k_n) \to \mathcal{C}^\ast(k_1+\cdots+k_n)  \]
\[(f\circ (f_1, \cdots, f_n))(\theta)=(f\otimes f_1 \otimes \cdots \otimes f_n) \Delta_{n;k_1, \cdots, k_n}(\theta) \]
for any $f\in \mathcal{C}^\ast(n)$, $f_i\in \mathcal{C}^\ast(k_i)$, $i=1, \cdots, n$, and $\theta \in \mathcal{C}(k_1+\cdots+k_n)$,
where $\Delta_{n;k_1, \cdots, k_n}$ is the projection of $\Delta(\theta)$ onto $\mathcal{C}(n) \otimes \mathcal{C}(k_1) \otimes \cdots\otimes \mathcal{C}(k_n)$.
Similarly, for a ns operad $\ip$ with each $\ip(n)$ being finite dimensional, one can obtain its linear dual cooperad $\ip^*$.

The notions of $\N$-graded vector spaces and operads can be extended to the graded framework. 
A graded $\N$-graded vector space $M$ is an $\N$-graded vector space in the category of graded vector spaces.
The component of arity $n$ is a graded vector space $\{M_p(n)\}_{p\in \Z}$ for any $n$.
Equivalently, $M$ can be viewed as a family of $\N$-graded vector spaces $\{M_p\}_{p\in \Z}$. 
Notice that a graded $\N$-graded vector space $M$ is a family indexed by two labels: the degree $p$ and the arity $n$.
The degree of an element $\mu\in M(n)$ is denoted by $|\mu|$, see \cite[Section 6.2]{LV} for more details.

\subsection{Free operads and cofree cooperads}

Let $\M$ be an $\N$-graded vector space. Recall that the \textit{free operad} generated by $\M$ is an operad $\F(\M)$ with a morphism
$\eta_\M\colon \M\to \F(\M)$ of $\N$-graded vector spaces which satisfies the following universal property: for any operad $\ip$ and any  morphism
$f\colon \M \to \ip$ of $\N$-graded vector spaces, there exists a unique operadic morphism $\widetilde{f}\colon \F(\M)\to \ip$ such that $\widetilde{f}\circ \eta_\M=f$.

Recall that the \textit{cofree cooperad} generated by $\M$ is the conilpotent cooperad $\F^c(\M)$ with a morphism $\pi\colon \F^c(\M)\to \M$ of $\N$-graded vector space  
satisfying the following universal property:
for any conilpotent cooperad $\mathcal{C}$ and any morphism $\varphi\colon \mathcal{C} \to \M$ of $\N$-graded vector space,
there exists a unique cooperad morphism $\widetilde{\varphi}\colon \mathcal{C} \to \F^c(\M)$ 
such that $\pi\circ \widetilde{\varphi}=\varphi$. 

On the explicit constructions of $\F(M)$ and $\F^c(M)$ generated by $\M$, we refer to Section 5.5 and Section 5.8.6 in \cite{LV}.

Recall that the\textit{ weight grading} on the free operad: the weight is the number of generating operations
needed in the construction of a given operation of the free operad. To be precise, the weight $w(\mu)$ of an operation 
$\mu$ of $\F(\M)$ is defined as follows: $w({\1})=0$, $w(\mu)=1$ for $\mu\in M(n)$, and more generally
\[w(\mu; \mu_1, \cdots, \mu_n)=w(\mu)+w(\mu_1)+\cdots+w(\mu_n).\]
We denote by $\F(\M)^{(r)}$ the $\N$-graded vector space of operations of weight $r$. The weight grading on the cofree cooperad $\F^c(\M)$ 
is defined similarly.

\subsection{Koszul duality of a quadratic operad}

By definition, an \textit{operadic quadratic data} $(E, R)$ is a graded $\N$-graded vector space $E$ and a graded sub-$\N$-graded vector space
$R\subset \F(E)^{(2)}$. The \textit{quadratic operad associated to} $(E, R)$ is defined to be
\[\ip(E, R)\colon=\F(E)/(R),\]
where $(R)$ is the operadic ideal of the free operad $\F(E)$ generated by $R$. 
The quadratic  cooperad $\mathcal{C}(E, R)$ associated to $(E, R)$ is defined to be 
the sub-cooperad of the cofree cooperad $\F^c(E)$ which is universal among the sub-cooperad $\mathcal{C}$ of $\F^c(E)$
such that the composite 
\[\mathcal{C} \hookrightarrow \F^c(E) \twoheadrightarrow \F^c(E)^{(2)}/R\]
is zero. 

Let $\mathcal{S}\colon=\End_{s\Bbbk}$ be the operad of endomorphisms of the $1$-dimensional vector space put in degree $1$.
The operadic suspension of an operad $\ip$ is defined to be the Hadamard product $\mathcal{S}\underset{{\rm H}}\otimes \ip$. 
Similarly, let $\mathcal{S}^c$ be the cooperad $\End_{s\Bbbk}^c$, and the suspension of a cooperad $\mathcal{C}$
is defined to be $\mathcal{S}^c\underset{{\rm H}}\otimes \mathcal{C}$.

The Koszul dual cooperad of the quadratic operad $\ip=\ip(E, R)$ is the quadratic cooperad 
\begin{align}\label{1-eq-Koszul dual cooperad}\tag{E1.4.1}
\ip^{\ac}\colon=\mathcal{C}(sE, s^2R),
\end{align}
where $sE$ is the $\N$-graded vector space $E$ whose degree is shifted by $1$. Observe that $\ip^{\ac}=\mathcal{C}(E, R)$ as an $\N$-graded vector space.
The Koszul dual operad of $\ip$ is the quadratic operad
\begin{align}\label{1-eq-Koszul dual operad}\tag{E1.4.2}
\ip^!\colon=(\mathcal{S}^c\underset{{\rm H}}\otimes \ip^{\ac})^\ast.
\end{align}

\begin{prop}[{\cite[Proposition 7.2.1]{LV}}]\label{1-prop-Koszul dual operad}
For any quadratic operad $\ip=\ip(E, R)$, generated by a reduced $\N$-graded vector space $E$ of finite dimensional in each arity, 
the Koszul dual operad $\ip^!$ admits a quadratic presentation of the form
\[\ip^!=\ip(s^{-1}\mathcal{S}^{-1}\underset{\rm H}\otimes E^\ast, R^\perp).\]	
\end{prop}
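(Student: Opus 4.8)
The statement is {\cite[Proposition 7.2.1]{LV}}, so the plan is to recover the quadratic presentation of $\ip^!$ directly from its definition \eqref{1-eq-Koszul dual operad} by linearly dualizing the Koszul dual cooperad $\ip^{\ac}$ while keeping careful track of the operadic suspension. The two structural inputs I would isolate first are: (a) that the linear dual of a quadratic cooperad is a quadratic operad, with generators and relations obtained by dualizing and taking annihilators; and (b) that the Hadamard product with $\mathcal{S}^c$ is compatible both with the cofree construction and with linear duality. Granting these, the computation assembles almost formally.

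For (a), I would use that $E$ is reduced and finite-dimensional in each arity, so the same holds arity-wise for $\F^c(E)$; consequently there is a weight-graded isomorphism of $\N$-modules $\F^c(V)^{\ast}\cong \F(V^{\ast})$ for $V=sE$ (dual bases indexed by the same trees, with the cooperadic decomposition dualizing to the operadic partial composition). Under this isomorphism the defining inclusion of the quadratic cooperad $\mathcal{C}(V,S)\hookrightarrow \F^c(V)$, characterized by the vanishing of $\mathcal{C}(V,S)\to \F^c(V)^{(2)}/S$, dualizes to the defining quotient $\F(V^{\ast})\twoheadrightarrow \F(V^{\ast})/(S^{\perp})$, where $S^{\perp}\subseteq \F(V^{\ast})^{(2)}$ is the annihilator of $S$ for the induced nondegenerate pairing $\F(V)^{(2)}\otimes \F(V^{\ast})^{(2)}\to \Bbbk$. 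This yields $\mathcal{C}(V,S)^{\ast}\cong \ip(V^{\ast},S^{\perp})$.

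Now I would assemble the two inputs. By \eqref{1-eq-Koszul dual cooperad} we have $\ip^{\ac}=\mathcal{C}(sE,s^2R)$, and by \eqref{1-eq-Koszul dual operad},
\[
\ip^!=(\mathcal{S}^c\underset{{\rm H}}{\otimes}\ip^{\ac})^{\ast}=(\mathcal{S}^c\underset{{\rm H}}{\otimes}\mathcal{C}(sE,s^2R))^{\ast}.
\]
Since each $\mathcal{S}^c(n)$ is one-dimensional, for any tree the tensor product of the local factors $\mathcal{S}^c$ is canonically identified with the global $\mathcal{S}^c$ of the corresponding arity (the degrees match because $\sum_v(\art(v)-1)$ equals the number of leaves minus one); hence the suspension commutes with the cofree cooperad and preserves the weight grading, so the universal sub-cooperad characterization transports, giving $\mathcal{S}^c\underset{{\rm H}}{\otimes}\mathcal{C}(sE,s^2R)=\mathcal{C}(\mathcal{S}^c\underset{{\rm H}}{\otimes}sE,\ \mathcal{S}^c\underset{{\rm H}}{\otimes}s^2R)$. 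Applying (a) together with the identities $(\mathcal{S}^c)^{\ast}\cong \mathcal{S}^{-1}$ and $(sE)^{\ast}=s^{-1}E^{\ast}$, the generators become $(\mathcal{S}^c\underset{{\rm H}}{\otimes}sE)^{\ast}\cong \mathcal{S}^{-1}\underset{{\rm H}}{\otimes}s^{-1}E^{\ast}\cong s^{-1}\mathcal{S}^{-1}\underset{{\rm H}}{\otimes}E^{\ast}$, and the relations become the annihilator $(\mathcal{S}^c\underset{{\rm H}}{\otimes}s^2R)^{\perp}$, which I claim coincides with $R^{\perp}$ inside $\F(s^{-1}\mathcal{S}^{-1}\underset{{\rm H}}{\otimes}E^{\ast})^{(2)}$.

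The main obstacle is precisely this last identification, i.e.\ the sign- and degree-bookkeeping of the suspension decorations. One must fix mutually compatible conventions for $\mathcal{S}^c$ (degree $1-n$ in arity $n$, so that $(\mathcal{S}^c)^{\ast}=\mathcal{S}^{-1}$), the shifts $s$, $s^2$, $s^{-1}$, and the pairing on weight-two operations, and then verify that the Koszul signs produced by the cooperadic decomposition of $\mathcal{S}^c\underset{{\rm H}}{\otimes}\ip^{\ac}$ cancel exactly against those introduced by $s^2$ on the relations, so that $(\mathcal{S}^c\underset{{\rm H}}{\otimes}s^2R)^{\perp}$ collapses to the unsuspended orthogonal $R^{\perp}$ rather than some twisted subspace. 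The finite-dimensionality of each $E(n)$ guarantees that the weight-two pairing is nondegenerate, so that annihilators behave well ($(S^{\perp})^{\perp}=S$) and the dualization is a genuine equivalence; this is what makes the passage from cooperad relations to operad relations reversible and completes the proof.
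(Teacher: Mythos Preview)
The paper does not provide its own proof of this proposition: it is stated in the preliminaries as a citation of \cite[Proposition 7.2.1]{LV} and is used as a black box (in the proof of Theorem~\ref{2-thm-Koszul dual of Agd}). There is therefore nothing in the paper to compare your argument against.

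That said, your outline is the standard one from \cite{LV} and is essentially correct. You have isolated the two genuine ingredients: the identification $\mathcal{C}(V,S)^{\ast}\cong \ip(V^{\ast},S^{\perp})$ via the arity-wise finite-dimensionality of $E$, and the compatibility of the Hadamard product with $\mathcal{S}^c$ with both the (co)free construction and linear duality. Your remark that the only nontrivial point is the sign bookkeeping showing $(\mathcal{S}^c\underset{{\rm H}}{\otimes}s^2R)^{\perp}=R^{\perp}$ is exactly right; in \cite{LV} this is handled by fixing once and for all the pairing on weight-two trees and checking that the Koszul signs from $\mathcal{S}^c$ cancel against those from the double suspension $s^2$. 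If you were to flesh this out, that verification is where the work lies, but for the purposes of this paper the citation suffices.
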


%

\subsection{Tree monomials and rewriting method}

In this subsection, we recall the rewriting method to prove the koszulity  of an operad, for details, see \cite[Chapter 8]{LV} and \cite[Chapter 3]{BD}. By definition, a \textit{rooted tree} $T$ is a finite set $\vert(T)$ (called the set of vertices) together with a parent function $\pnt_T$, where $\vert(T)= \int(T)\sqcup \leav(T ) \sqcup \{r\}$ is the disjoint union of 
the set $\int(T)$ of internal vertices,  the set $\leav(T )$ of leaves and an assigned vertex $r$ (called the root),
the parent function $\pnt_T\colon \vert(T)\setminus\{r\}\ra \vert(T)$ satisfies $|\pnt^{-1}_T(r)|=1$, $|\pnt^{-1}_T(l)|=\emptyset$ for any $l\in \leav(T)$, and for each vertex $v \in \vert(T)\setminus\{r\}$, there is a (unique) positive integer $n$ and vertices
	$v_0=v,v_1,\cdots,v_n=r$, such that $v_i=\pnt_T(v_{i-1})$
	for all $1\leq i\leq n$.

A \textit{planar rooted tree} $T$ is a rooted tree  such that for each $v\in \vert(T)$, there is a total order on the preimage $\pnt^{-1}_T(v)$.
In this paper, a planar rooted trees is drawn as a tree in the plane such that the order on $\pnt^{-1}(v)$ is determined by ordering the corresponding edges left-to-right and the order of leaves of $T$ is also sorted from the left to the right. 
For example, let $T$ be a planar rooted tree $T$ with $\int(T)=\{v_1,v_2,v_3\}$, $\leav(T )=\{l_1,l_2,l_3,l_4\}$ and $\pnt^{-1}(r)=\{v_1\}$, $\pnt^{-1}(v_1)=\{v_2<l_3<v_3\}$,  $\pnt^{-1}(v_2)=\{l_1<l_2\}$, $\pnt^{-1}(v_3)=\{l_4\}$, 
and the total order on the leaves is $\{l_1<l_2<l_3<l_4\}$.
\begin{equation*}\begin{array}{c}
		$\xymatrix@M=3pt@C=10pt@R=10pt{
			l_1	\ar@{-}[dr] & & l_2\ar@{-}[dl] & & l_4 \ar@{-}[d] \\
			& v_2\ar@{-}[drr]  & &l_3 \ar@{-}[d] &  v_3\ar@{-}[dl]\\
			& &  &v_1 \ar@{-}[d] & \\
			&&&r&
		}$\end{array} 
\end{equation*}

Let $\mx=\{\mx(n)\}_{n\ge 0}$ be a collection of finite sets $\mx(n)$, which is called an \textit{operation alphabet}. 
A \textit{tree monomial} in $\mx$ is a pair $(T, \lb)$, where $T$ is a planar rooted tree and $\lb\colon \int(T)\to \mx$ is a map satisfying
$\lb(v)\in \mx(|\pnt^{-1}(v)|)$ for any $v\in \int(T)$. 
In fact, a tree monomial gives a labeling of all internal vertices of $T$ by elements of $\mx$.

Let $\mx=\{\mx(n)\}_{n\ge 0}$ be an operation alphabet with a total order.  
Suppose that $E$ is an $\N$-Mod  generated by $\mx$, that is, each $E(n)$ is the $\Bbbk$-vector space spanned by $\mx(n)$ for $n\geq 0$.
Then the set of all  tree monomials in $\mx$ form a $\Bbbk$-linear basis of the free ns operad $\mathcal{F}(E)$.
For each tree monomial $(T, \lb)\in \mathcal{F}(E)$, we record the labels of internal vertices of
the path from the root  to each leaves  and arrange it according to the order of leaves.
Then one get a sequence of words in the alphabet $\mx$, denoted by {\rm Path}$(T, \lb)$,
which is called \textit{path sequence} of the tree monomial  $(T, \lb)$.
For example, we consider the following three tree monomials
\begin{equation*}\begin{array}{c}
		$\xymatrix@M=3pt@C=10pt@R=10pt{
			\ar@{-}[dr] & & \ar@{-}[dl] & &  \ar@{-}[d] \\
			& \mu_2\ar@{-}[drr]  & & \ar@{-}[d] &  \mu_3\ar@{-}[dl]\\
			& &  &\mu_1 \ar@{-}[d]& \\
			& & &&
		}$\end{array}, \begin{array}{c}
		$\xymatrix@M=3pt@C=10pt@R=10pt{
			& & \ar@{-}[dr] & &  \ar@{-}[dl] \\
			& \ar@{-}[dr]  & & \mu_2\ar@{-}[dl] & \\
			& & \mu_1 \ar@{-}[d]& & \\
			& & &&
		}$\end{array},  \begin{array}{c}
		$\xymatrix@M=3pt@C=10pt@R=10pt{
			\ar@{-}[dr] & & \ar@{-}[dl] & &  \ar@{-}[ddll] \\
			& \mu_2\ar@{-}[dr]  & &  & \\
			& & \mu_1 \ar@{-}[d] & & \\
			&&&&
		}$\end{array}.
\end{equation*}
Then the corresponding path sequences are $(\mu_1\mu_2,\mu_1\mu_2,\mu_1,\mu_1\mu_3)$, $(\mu_1,\mu_1\mu_2,\mu_1\mu_2)$, $(\mu_1\mu_2,\mu_1\mu_2,\mu_1)$,
respectively.

The \textit{path-lexicographic order} on the set of all tree monomials in $\mx$ is defined as follows: for two tree monomials $(T, \lb)$ and $(T', \lb')$,
\begin{itemize}
	\item  if $|\leav(T)|<|\leav(T')|$, we put $(T, \lb) < (T', \lb')$;
	\item  if $|\leav(T)|=|\leav(T')|$, we compare the path sequences {\rm Path}$(T, \lb)$ and {\rm Path}$(T', \lb')$ word by word, using the graded lexicographic order of $\mx^\ast$, which is the set of all words in the alphabet $\mx$.	 
\end{itemize}

Let $(E, R)$ be a quadratic data and $\ip(E, R)=\mathcal{F}(E)/(R)$ the associated quadratic operad, 
where $E$ is an $\N$-graded vector space spanned by an operation alphabet $\mx=\{\mx(n)\}_{n\ge 0}$ with a total order, and
$R\subset \mathcal{F}(E)^{(2)}$ is the set of relators. 
Observe that the space $R$ of relations admits a $\Bbbk$-basis $\mathcal{B}_R$ in which the element is of the form
\begin{align}\label{1-eq-the form of Grobner basis}\tag{E1.5.1}
(\gamma_{i,a,j})=\mu_i\ucr{a}\mu_j-\sum\lambda_{k,b,l}^{i,a,j}~\mu_k\ucr{b}\mu_l,\quad 0\neq\lambda_{k,b,l}^s\in\Bbbk,
\end{align}
such that the sum runs over the terms satisfying $\mu_k\ucr{b}\mu_l< \mu_i\ucr{a}\mu_j$ and $\mu_{q}\in \bigcup_{n\geq 0}\mx(n)$ for any $q\geq 0$. Here the tree monomial $\mu_i\ucr{a}\mu_j$ is called the \textit{leading term} of the relator $(\gamma_{i,a,j})$.
One can always assume that  the leading terms of the set of relators are all distinct and there is no leading term of any other relator in the sum of the right  side. 
Note that the relator $(\gamma_{i,a,j})$ give rise to a rewriting rule in the operad $\ip(E, R)$ by
\[\mu_i\ucr{a}\mu_j\mapsto\sum\lambda_{k,b,l}^{i,a,j}~\mu_k\ucr{b}\mu_l.\]
A tree monomial $(T, \lb) \in \mathcal{F}(E)^{(3)}$ is said to be \textit{critical} if the two sub-trees with $2$ internal vertices are leading terms of some relators. Clearly, there are at least two ways to rewrite a critical monomial. A critical monomial is said to be \textit{confluent} if all these rewriting rules lead to the same element until no rewriting rule is applicable any more. The following theorem plays a crucial role to prove a ns operad is Koszul.

\begin{thm}[{\cite[Theorem 8.1.1]{LV}}]\label{1.3-thm-rewriting method to Koszul}
	Let $\ip(E,R)$ be a reduced quadratic ns operad, where $E$ is an $\N$-graded vector space generated by an operation alphabet $\mx$. 
	If the operation alphabet $\mx$ admits a total order for which there exists a suitable order on planar trees 
	such that every critical monomial is confluent, then $\ip(E, R)$ is Koszul.
\end{thm}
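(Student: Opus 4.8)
The plan is to deduce Koszulity from the theory of Gr\"obner bases for operads, by showing that the confluence hypothesis forces the relators \eqref{1-eq-the form of Grobner basis} to be a quadratic Gr\"obner basis, and then invoking the principle that an operad whose associated graded (monomial) operad is Koszul is itself Koszul. First I would record that, by the presentation \eqref{1-eq-the form of Grobner basis}, each relator has a distinguished leading term $\mu_i\ucr{a}\mu_j$ and defines a rewriting rule $\mu_i\ucr{a}\mu_j\mapsto\sum\lambda^{i,a,j}_{k,b,l}\,\mu_k\ucr{b}\mu_l$ whose right-hand side is a combination of strictly smaller tree monomials. Call a tree monomial \emph{normal} if it contains no leading term as a two-vertex subtree; the entire argument is organized around proving that the classes of normal monomials form a $\Bbbk$-basis of $\ip(E,R)$.

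The first key step is termination: since the suitable order on planar trees is compatible with grafting and strictly decreases under every rewriting rule, and since for fixed arity only finitely many monomials lie below a given one, every chain of reductions stops, so normal forms exist. The second key step is confluence. By Newman's diamond lemma, for a terminating rewriting system local confluence implies global confluence, and local confluence need only be checked on the minimal overlaps of two rules---these are exactly the critical monomials in $\F(E)^{(3)}$ both of whose two-vertex subtrees are leading terms. The hypothesis that every critical monomial is confluent therefore upgrades to global confluence, and a terminating confluent system has unique normal forms; hence every element of $\ip(E,R)$ has a unique normal-monomial expansion and the normal monomials form a basis. Equivalently, \eqref{1-eq-the form of Grobner basis} is a quadratic Gr\"obner basis.

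Finally I would transfer Koszulity through the order filtration. I would form the quadratic monomial operad $\ip^{\mathrm{lead}}$ obtained by keeping only the leading terms $\mu_i\ucr{a}\mu_j$ as relations, and note that the filtration on $\ip(E,R)$ induced by the order has associated graded operad isomorphic to $\ip^{\mathrm{lead}}$, since both carry the normal monomials as basis. Two facts then finish the proof: a quadratic monomial operad is always Koszul, because its normal-monomial basis splits the Koszul complex in each arity and weight; and if $\mathrm{gr}\,\ip$ is Koszul then $\ip$ is Koszul, by the reduction-by-filtration comparison of the two Koszul complexes. The hard part will be this last implication: lifting Koszulity from the associated graded operad requires setting up the filtration on the Koszul complex of $\ip(E,R)$ and showing the resulting spectral sequence degenerates onto the acyclic Koszul complex of $\ip^{\mathrm{lead}}$. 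By contrast, the combinatorial reduction to critical monomials is routine once the diamond lemma is available, so the genuine technical content lies in the filtration argument.
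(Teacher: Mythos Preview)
Your sketch is essentially correct and follows the standard route in \cite[Chapter~8]{LV}: the diamond lemma reduces global confluence to the critical weight-$3$ monomials, confluence plus termination gives unique normal forms and hence a quadratic Gr\"obner basis, and Koszulity is then pulled back from the associated monomial operad via the order filtration. There is nothing to compare against here, however: the paper does not prove this theorem at all but simply quotes it as \cite[Theorem~8.1.1]{LV} and applies it as a black box in the proof of Theorem~\ref{2-thm-Adg is Koszul}. So your proposal is not an alternative to the paper's argument---it is a (reasonable) outline of the external result the paper is invoking.
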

In this case, the basis $\mathcal{B}_R$ of the relation space $R$ is called a \textit{Gr\"{o}bner basis} of the operadic ideal $(R)$.
The  following result is useful to find a $\Bbbk$-basis of a ns operad $\mathcal{F}(E)/(R)$.

\begin{prop}\cite[Proposition 3.4.3.4.]{BD} \label{1.3-prop-basis of koszul operad}
	Let $\mathcal{I}$ be an operadic ideal of free operad $\mathcal{F}(E)$. Then $\mathcal{B} \subset \mathcal{I}$ is a Gr\"{o}bner basis if and only if the cosets of tree monomials that are reduced with respect to $\mathcal{B}$ form a $\Bbbk$-basis of the quotient operad $\mathcal{F}(E)/\mathcal{I}$.
\end{prop}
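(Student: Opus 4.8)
The plan is to treat this as the operadic analogue of the classical fact that, for a fixed admissible monomial order, the standard (reduced) monomials descend to a basis of the quotient. Write $\operatorname{LT}(\mathcal{I})$ for the $\Bbbk$-span of the leading terms $\operatorname{lt}(f)$ of all nonzero $f\in\mathcal{I}$; since the path-lexicographic order is total, this submodule is spanned by tree monomials. Call a tree monomial \emph{normal} with respect to $\mathcal{B}$ if it is reduced, i.e. it contains no leading term $\operatorname{lt}(b)$ ($b\in\mathcal{B}$) as a subtree, and denote by $N(\mathcal{B})$ the set of normal monomials. The working characterization I would adopt is that $\mathcal{B}$ is a Gr\"obner basis precisely when the monomials $\{\operatorname{lt}(b)\mid b\in\mathcal{B}\}$ generate the same operadic monomial submodule as $\operatorname{LT}(\mathcal{I})$; equivalently, $\operatorname{lt}(f)$ is divisible by some $\operatorname{lt}(b)$ for every nonzero $f\in\mathcal{I}$. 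Since $\mathcal{B}\subset\mathcal{I}$, the ideal $(\mathcal{B})$ is automatically contained in $\mathcal{I}$, which I use throughout.

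The first step I would carry out is to set up the division algorithm for tree monomials. Given any $f\in\mathcal{F}(E)$, I repeatedly apply the rewriting rules $\operatorname{lt}(b)\mapsto\operatorname{lt}(b)-b$, in every subtree context, coming from the elements of $\mathcal{B}$. The crucial point is that the path-lexicographic order is \emph{admissible}: it is a well-order and it is compatible with the partial compositions $-\ucr{a}-$, so each rewriting step replaces a monomial by a $\Bbbk$-combination of strictly smaller monomials. Hence the procedure terminates and yields a remainder $r\in\operatorname{span}(N(\mathcal{B}))$ with $f-r\in(\mathcal{B})\subseteq\mathcal{I}$, in particular $f\equiv r$ modulo $\mathcal{I}$. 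This already shows that the cosets of $N(\mathcal{B})$ span $\mathcal{F}(E)/\mathcal{I}$, independently of whether $\mathcal{B}$ is a Gr\"obner basis.

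For the forward implication, assume $\mathcal{B}$ is a Gr\"obner basis; spanning is given by the previous paragraph, so it remains to prove linear independence of the normal cosets. If $\sum_i c_i m_i\in\mathcal{I}$ were a nontrivial relation among distinct normal monomials $m_i$, I would take $m_j$ to be the largest monomial with $c_j\neq0$; then $\operatorname{lt}\bigl(\sum_i c_i m_i\bigr)=m_j$, so $m_j\in\operatorname{LT}(\mathcal{I})$, and the Gr\"obner condition forces $m_j$ to be divisible by some $\operatorname{lt}(b)$, contradicting $m_j\in N(\mathcal{B})$. For the converse, assume the cosets of $N(\mathcal{B})$ form a basis of $\mathcal{F}(E)/\mathcal{I}$ and suppose, for contradiction, that $\mathcal{B}$ is not a Gr\"obner basis, so some nonzero $f\in\mathcal{I}$ has $\operatorname{lt}(f)\in N(\mathcal{B})$. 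Reducing only the non-normal monomials of $f$ by the division algorithm leaves $\operatorname{lt}(f)$ untouched, since it is already normal and every rewriting step strictly lowers monomials; this produces a nonzero $r\in\operatorname{span}(N(\mathcal{B}))$ with $r\equiv f\equiv0$ modulo $\mathcal{I}$, contradicting the linear independence of the normal cosets.

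The step I expect to be the main obstacle is the admissibility of the order, namely verifying that the path-lexicographic order is a well-order compatible with operadic grafting. This compatibility is exactly what guarantees both the termination of the division algorithm and the multiplicativity $\operatorname{lt}(\theta\ucr{a}\theta')=\operatorname{lt}(\theta)\ucr{a}\operatorname{lt}(\theta')$ that is used implicitly when dividing and when reading off leading terms of elements of $(\mathcal{B})$; once it is in place, the remaining bookkeeping is a direct operadic transcription of the commutative-algebra argument.
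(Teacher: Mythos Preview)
The paper does not give its own proof of this proposition: it is stated with an attribution to \cite[Proposition 3.4.3.4]{BD} and left unproved. So there is no ``paper's proof'' to compare against; you have supplied an argument where the authors simply imported the result.

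That said, your outline is the standard proof and is essentially correct. The structure---division algorithm with respect to an admissible order, spanning by normal monomials, then linear independence via the leading-term characterization of a Gr\"obner basis---is exactly the argument one finds in \cite{BD}. Your identification of admissibility (well-foundedness plus compatibility with grafting) as the crux is accurate: this is what makes the reduction terminate and what ensures that leading terms behave multiplicatively under $\ucr{a}$. One small point worth tightening in the converse direction: when you pick $f\in\mathcal{I}$ with $\operatorname{lt}(f)$ normal, you should choose $f$ with $\operatorname{lt}(f)$ \emph{minimal} among such counterexamples (or equivalently reduce $f$ fully and argue the remainder is nonzero because its leading term equals $\operatorname{lt}(f)$); otherwise the claim that reduction ``leaves $\operatorname{lt}(f)$ untouched'' needs a line of justification, since a priori lower non-normal monomials could in principle interact---they cannot, precisely because rewriting only produces strictly smaller monomials, but saying this explicitly closes the gap.
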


\subsection{Generalized derivation}
Let $A$ be an associative algebra. 
Recall that a linear map $\delta\colon A\to A$ is called a \textit{generalized derivation} of $A$ if 
there exists a derivation $h$ of $A$ such that 
\[\delta(ab)=\delta(a)b+ah(b)
\]
for all $a, b\in A$, see \cite{Br}. In this case, $\delta$ is also called a \textit{generalized $h$-derivation}.

Here we give some examples of generalized derivations.

\begin{exm}[\cite{Br}]
Let $A$ be an associative algebra. 
Fixing two elements $a, b\in A$, we consider the linear map
\[\delta_{a, b}\colon A\to A, \quad \delta_{a, b}(x):=ax+xb,\]
for all $x\in A$. 
Clearly, $\delta_{a, b}$ is a generalized $\operatorname{ad}_b$-derivation.
In this case, $\delta_{a, b}$ is called a \textit{generalized inner derivation}.
\end{exm}

\begin{exm}
Let $V$ be a vector space, and $\overline{T}(V)=\bigotimes_{n\geq 1}V^{\otimes n}$ be the reduced tensor algebra with the concatenation product. 
Suppose $d$ and $\delta$ are linear maps from $V$ to $\overline{T}(V)$. 
Define the linear maps $\overline{d},\overline{\delta}\colon \overline{T}(V) \rightarrow \overline{T}(V)$ given by: 
\begin{align*}
	\overline{d} (v_1\otimes v_2\cdots\ot v_n)&=\sum_{i=1}^n v_1\ot \cdots \ot d(v_i)\ot\cdots\ot v_n,	\\
	\overline{\delta} (v_1\otimes v_2\cdots\ot v_n)&=\delta(v_1)\ot v_2\ot \cdots \ot v_n+\sum_{i=2}^n v_1\ot \cdots \ot d(v_i)\ot\cdots\ot v_n.	
\end{align*}
It is easy to verify that $\overline{d}$ is a derivation of $\overline{T}(V)$ and $\overline{\delta}$ is a generalized $\overline{d}$-derivation. In fact, any  generalized derivation on  $\overline{T}(V)$ arises in this way.	
\end{exm}

Let $A$ be a unital associative algebra with the identity $1_A$.
Suppose that $h$ is a derivation of $A$ and $\delta$ is a generalized $h$-derivation. Then, for any $a\in A$, we have
\[\delta(a)=\delta(1_A\cdot a)=\delta(1_A)\cdot a+h(a).\]
Therefore, $\delta$ is uniquely determined by the element $\delta(1_A)\in A$.


Let $A$ be a non-unital associative algebra, and $\delta$ be a generalized $h$-derivation of $A$. 
Then the triple $(A, h, \delta)$ is called an \textit{AsGDer triple}.
Let $(A, h_A, \delta_A)$ and $(B, h_B, \delta_B)$ be AsGDer triples. 
A \textit{morphism} of AsGDer triples is defined to be   
a homomorphism $f\colon A \to B$ of associative algebras satisfying
\[h_B\circ f=f\circ h_A,\quad \delta_B\circ f=f\circ\delta_A.\]
We denote by $\mathbf{AsGDer}$ the category of AsGDer triples.

\section{Operads of associative algebra with generalized derivations }

In this section, we construct the operad $\Agd$ that encodes AsGDer triples and calculate the Koszul dual operad of $\Agd$. 

\subsection{The AsGDer operad}

\begin{Def}\label{2-def-operad Agd}
Let $(E, R)$ be a quadratic data, where $E$ is an $\N$-graded vector space with $E(1)=\Bbbk h\oplus \Bbbk \delta$, $E(2)=\Bbbk \mu$ and $E(n)=0$ for all $n\neq 1, 2$, and $R$ is a sub-$\N$-graded vector space of $\F(E)$ spanned by 
\begin{align}\label{2-eq-def of relation on operad Ad}\tag{E2.1.1}
	\mu\ucr 1\mu- \mu\ucr 2\mu, \quad 
		h\ucr 1 \mu -\mu\ucr 1 h-\mu\ucr 2 h\quad {\rm and} \quad 
		\delta\ucr 1 \mu - \mu\ucr 1 \delta-\mu\ucr 2 h.	
\end{align}
Then the quadratic operad associated to the quadratic data $(E, R)$ is called \textit{AsGDer operad}, denoted by $\Agd$.
\end{Def}

Let $A$ be an $\Agd$-algebra given by the operad morphism $\gamma\colon \Agd\to \End_A$. 
For convenience, we still denote the operation $\gamma_1(h)$, $\gamma_1(\delta)$ and $\gamma_2(\mu)$ by $h$, $\delta$ and $\mu$, respectively.
By the definition of $\Agd$, $\mu$ is an associative multiplication of $A$, $h$ is a derivation of $A$,
and $\delta$ is a generalized $h$-derivation of $A$.
Therefore, $(A, h, \delta)$ is an AsGDer triple.
Conversely, it is easily seen that an AsGDer triple $(A, h, \delta)$ is also an algebra over the operad $\Agd$.

\begin{thm}\label{2-thm-Adg is Koszul}
	The ns operad $\Agd$ is Koszul.
\end{thm}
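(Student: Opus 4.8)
The plan is to establish Koszulity via the rewriting method, i.e.\ Theorem~\ref{1.3-thm-rewriting method to Koszul}. First I would take the operation alphabet $\mx$ with $\mx(1)=\{h,\delta\}$ and $\mx(2)=\{\mu\}$, ordered by $h>\delta>\mu$, and use the induced path-lexicographic order on planar trees. Because $h$ and $\delta$ exceed $\mu$, in each relation of \eqref{2-eq-def of relation on operad Ad} the word $h\mu$ (resp.\ $\delta\mu$, resp.\ the weight-two word $\mu\mu$) dominates the competing words in the relevant path sequences, so the three relations have leading terms $\mu\ucr 1\mu$, $h\ucr 1\mu$ and $\delta\ucr 1\mu$. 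The corresponding rewriting rules are
\begin{align*}
\mu\ucr 1\mu &\mapsto \mu\ucr 2\mu,\\
h\ucr 1\mu &\mapsto \mu\ucr 1 h+\mu\ucr 2 h,\\
\delta\ucr 1\mu &\mapsto \mu\ucr 1\delta+\mu\ucr 2 h.
\end{align*}
Each right-hand side is a sum of strictly smaller tree monomials, so the rewriting system terminates.

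Next I would enumerate the critical monomials, i.e.\ the weight-three trees both of whose weight-two subtrees are leading terms. Every leading term has the shape $f\ucr 1\mu$ with $f\in\{\mu,h,\delta\}$; hence in any overlap the shared internal vertex, seen as the lower operation of the upper pair, must be $\mu$, and seen as the upper operation of the lower pair it forces that lower pair to be $\mu\ucr 1\mu$. A ``fork'' overlap at a binary vertex is impossible, since no $\mu\ucr 2(-)$ is a leading term, and the unary generators cannot branch. Therefore there are exactly three critical monomials:
\[\mu\ucr 1\mu\ucr 1\mu,\qquad h\ucr 1\mu\ucr 1\mu,\qquad \delta\ucr 1\mu\ucr 1\mu.\]

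Then I would check confluence of each by reducing both available rewriting paths to a common normal form. The monomial $\mu\ucr 1\mu\ucr 1\mu$ is the classical associativity critical monomial of $\ias$, whose confluence is standard. For $h\ucr 1\mu\ucr 1\mu$, rewriting the outer $h\ucr 1\mu$ first and rewriting the inner $\mu\ucr 1\mu$ first both terminate, after all applicable rules are exhausted, at the common value which on generic inputs $x_1,x_2,x_3$ reads $(hx_1)(x_2x_3)+x_1\bigl((hx_2)x_3\bigr)+x_1\bigl(x_2(hx_3)\bigr)$; this is just the iterated Leibniz expansion of $h$, and the two paths agree.

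The last and most delicate case is $\delta\ucr 1\mu\ucr 1\mu$, where confluence encodes the compatibility between the generalized-derivation rule and the derivation rule. Rewriting the outer $\delta\ucr 1\mu$ first produces one $\delta$-term and one $h$-term, which reduce to $(\delta x_1)(x_2x_3)+x_1\bigl((hx_2)x_3\bigr)+x_1\bigl(x_2(hx_3)\bigr)$; rewriting the inner $\mu\ucr 1\mu$ first instead defers the $h$-contribution until the rule creates an $h\ucr 1\mu$ leading term, whose rewriting reproduces exactly the same two $h$-terms. I expect this reconciliation---the $h$ emitted by the $\delta$-rule matching the output of the $h$-rule---to be the only genuine (and still short) obstacle, since it is the sole place where all three generators interact. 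With all three critical monomials confluent, Theorem~\ref{1.3-thm-rewriting method to Koszul} gives that $\Agd$ is Koszul; the relators then form a Gr\"obner basis of $(R)$, and Proposition~\ref{1.3-prop-basis of koszul operad} furnishes the explicit monomial basis of $\Agd$ used subsequently.
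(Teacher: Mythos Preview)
Your proposal is correct and follows essentially the same route as the paper: choose an order on $\{\mu,h,\delta\}$ making $\mu\ucr1\mu$, $h\ucr1\mu$, $\delta\ucr1\mu$ the leading terms, identify the three critical monomials $f\ucr1(\mu\ucr1\mu)$ for $f\in\{\mu,h,\delta\}$, and verify their confluence (the paper uses $\mu<h<\delta$ rather than your $\mu<\delta<h$, but this is immaterial since both orders yield the same leading terms). The paper carries out the $\delta$-confluence check in operadic notation rather than on elements, but the computation is the same as yours.
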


\begin{proof}
By Definition \ref{2-def-operad Agd}, we consider the ordering of generators of $\As$ given by
$\mu<h< \delta$, and the leading terms of relations $R$ are
$\mu\ucr 1\mu$, $h\ucr 1\mu$ and $\delta\ucr 1\mu$, respectively. They give rise to three critical monomials:
\[\mu\ucr 1(\mu\ucr 1\mu), \quad h\ucr1(\mu\ucr 1\mu),\quad  \delta\ucr 1 (\mu\ucr 1\mu).	\]
To prove $\Agd$ is Koszul, we only need to check that these critical monomials are confluent, it is well-known that $\mu\ucr 1(\mu\ucr 1\mu)$ is confluent, for $\delta\ucr 1 (\mu\ucr 1\mu)=(\delta\ucr 1 \mu)\ucr 1\mu$, we have 
\begin{align*}
	\delta\ucr 1 (\underline{\mu\ucr 1\mu}) &\mapsto \delta\ucr 1 (\mu\ucr 2\mu)= (\underline{\delta\ucr 1 \mu})\ucr 2\mu \mapsto 
	(\mu\ucr1 \delta+\mu\ucr2 h)\ucr 2\mu=(\mu\ucr1 \delta)\ucr 2\mu+
	\mu\ucr2 (\underline{h\ucr 1\mu}) \\
	&\mapsto(\mu\ucr1 \delta)\ucr 2\mu+\mu\ucr2 (\mu\ucr1 h)+\mu\ucr2(\mu\ucr2 h)=(\mu\ucr1 \delta)\ucr 2\mu+(\mu\ucr2 \mu)\ucr2 h+(\mu\ucr2\mu)\ucr3 h,\\
	(\underline{\delta\ucr 1 \mu})\ucr 1\mu&\mapsto (\mu\ucr 1\delta)\ucr 2\mu+(\mu\ucr 2 h)\ucr 2\mu=(\mu\ucr 1\delta)\ucr 2\mu+\mu\ucr 2(\underline{h\ucr 1\mu} )\\
	&\mapsto (\mu\ucr 1\delta)\ucr 2\mu+ \mu\ucr 2(\mu\ucr1 h )+\mu\ucr2(\mu\ucr 2 h)=(\mu\ucr 1\delta)\ucr 2\mu+ (\mu\ucr 2\mu)\ucr2 h +(\mu\ucr2\mu)\ucr 3 h.
\end{align*}
Hence the critical monomial $\delta\ucr 1 (\mu\ucr 1\mu)$ is confluent.
Similarly, one can show that $h\ucr 1(\mu\ucr 1\mu)$ is confluent. By Theorem \ref{1.3-thm-rewriting method to Koszul}, we know that the operad $\Agd$ is Koszul.
\end{proof}

\subsection{The Koszul dual operad of $\Agd$}
\begin{thm}\label{2-thm-Koszul dual of Agd}
The Koszul dual operad $\Agd^!$ of $\Agd$ is the operad generated by two unary operations  $\bar h, \bar \delta$ of degree $-1$ and one binary operation $\bar\mu$ of degree $0$, which satisfy the following relations:
\begin{align}\label{2-eq-generate relation of dual operad}\tag{E2.2.1}
	\begin{aligned}
		&\bar\mu\ucr 1\bar\mu=\bar\mu\ucr 2\bar\mu,           &&~~\bar\mu\ucr 1 \bar h=\bar h\ucr 1\bar\mu,  &&~~\bar\mu\ucr 1\bar\delta=\bar\delta\ucr 1\bar\mu,\\
		&\bar\mu\ucr 2 \bar h=\bar\delta \ucr 1\bar\mu+\bar h\ucr 1\bar\mu, &&~~\bar\mu\ucr 2\bar\delta=0,        &&~~\bar h\ucr 1\bar h=0,\\
		&\bar h\ucr 1\bar\delta=0,                       &&~~\bar\delta\ucr 1\bar\delta=0,       &&~~\bar\delta\ucr 1\bar h=0.
	\end{aligned} 			
\end{align}
\end{thm}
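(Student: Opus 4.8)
The plan is to apply Proposition \ref{1-prop-Koszul dual operad}, which presents the Koszul dual operad as the quadratic operad $\ip(s^{-1}\mathcal{S}^{-1}\underset{\rm H}{\otimes} E^\ast, R^\perp)$. First I would identify the generating $\N$-module. Dualizing $E$ produces one binary generator $\bar\mu$ (dual to $\mu$) and two unary generators $\bar h,\bar\delta$ (dual to $h,\delta$); applying the desuspension $s^{-1}\mathcal{S}^{-1}$ shifts the internal degree of an arity-$n$ generator by $n-2$, so that $\bar\mu$ lands in degree $0$ and $\bar h,\bar\delta$ in degree $-1$, matching the statement. It then remains to compute the orthogonal space $R^\perp\subset \F(s^{-1}\mathcal{S}^{-1}\underset{\rm H}{\otimes}E^\ast)^{(2)}$ arity by arity.

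The weight-two part of the free operad splits by arity, and the quadratic pairing $\langle -,-\rangle$ between $\F(E)^{(2)}$ and $\F(s^{-1}\mathcal{S}^{-1}\underset{\rm H}{\otimes}E^\ast)^{(2)}$ is block-diagonal with respect to the underlying two-vertex planar tree: two composites pair nontrivially only when they are carried by the same tree shape and their labels are mutually dual, in which case the value is $\pm1$, the sign being dictated by the Koszul sign rule coming from $\mathcal{S}^{-1}$. In arity $1$ the defining relations have no component, so $R^\perp$ is the whole four-dimensional space spanned by $\bar h\ucr 1\bar h,\ \bar h\ucr 1\bar\delta,\ \bar\delta\ucr 1\bar h,\ \bar\delta\ucr 1\bar\delta$, which yields the four vanishing relations among the unary generators. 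In arity $3$ the monomials $\mu\ucr 1\mu$ and $\mu\ucr 2\mu$ sit on distinct (left- and right-comb) trees, and the suspension forces $\langle\bar\mu\ucr 1\bar\mu,\mu\ucr 1\mu\rangle=-\langle\bar\mu\ucr 2\bar\mu,\mu\ucr 2\mu\rangle$; orthogonality to the single associativity relator $\mu\ucr 1\mu-\mu\ucr 2\mu$ then produces exactly $\bar\mu\ucr 1\bar\mu=\bar\mu\ucr 2\bar\mu$.

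The essential computation is in arity $2$, where $\F(E)^{(2)}(2)$ is six-dimensional with basis $\{h\ucr 1\mu,\ \delta\ucr 1\mu,\ \mu\ucr 1 h,\ \mu\ucr 1\delta,\ \mu\ucr 2 h,\ \mu\ucr 2\delta\}$ distributed over three tree shapes, and $R$ occupies the two-dimensional subspace spanned by $r_1=h\ucr 1\mu-\mu\ucr 1 h-\mu\ucr 2 h$ and $r_2=\delta\ucr 1\mu-\mu\ucr 1\delta-\mu\ucr 2 h$. Writing a general dual element with indeterminate coefficients and imposing $\langle -,r_1\rangle=\langle -,r_2\rangle=0$ gives two linear equations whose solution space is four-dimensional. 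I would then pin down the pairing signs from the $\mathcal{S}^{-1}$-convention and read off a basis. The crucial feature is that the monomial $\mu\ucr 2 h$ occurs in both $r_1$ and $r_2$, which couples the $\bar h$- and $\bar\delta$-contributions and is precisely what produces the asymmetric relation $\bar\mu\ucr 2\bar h=\bar\delta\ucr 1\bar\mu+\bar h\ucr 1\bar\mu$, alongside $\bar\mu\ucr 1\bar h=\bar h\ucr 1\bar\mu$, $\bar\mu\ucr 1\bar\delta=\bar\delta\ucr 1\bar\mu$ and $\bar\mu\ucr 2\bar\delta=0$. Assembling the three arities gives the nine relations of \eqref{2-eq-generate relation of dual operad}.

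The main obstacle is the sign bookkeeping: the desuspension $s^{-1}\mathcal{S}^{-1}$ introduces Koszul signs depending on the grafting position and on the degrees of the grafted operations, and it is exactly these signs — the $-1$ relating $\bar\mu\ucr 1\bar\mu$ and $\bar\mu\ucr 2\bar\mu$, and the signs that make $r_1,r_2$ contribute a single combined relation — that determine the precise form of $R^\perp$. Once the conventions are fixed consistently, verifying that the nine listed elements lie in $R^\perp$ together with the dimension count ($4+4+1$ for $R^\perp$ against the complementary $0+2+1$ for $R$) confirms that they constitute a complete set of relations, which establishes the theorem.
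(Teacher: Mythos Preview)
Your proposal is correct and follows essentially the same route as the paper: both invoke Proposition~\ref{1-prop-Koszul dual operad} to present $\Agd^!$ as a quadratic operad on the suspended dual generators and then determine $R^\perp$ by an arity-by-arity orthogonality computation (the paper phrases this as first writing down $(s^2R)^\perp$ inside $(\Agd^{\ac})^*$ and then applying the Hadamard product with $(\mathcal{S}^c)^*$, but this is the same calculation reorganized). Your dimension count and identification of the coupling via the shared monomial $\mu\ucr 2 h$ are exactly the mechanism behind the paper's listed relations.
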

\begin{proof}
By Definition \ref{2-def-operad Agd} and Equation \eqref{1-eq-Koszul dual cooperad}, we have that the Koszul dual cooperad $\Agd^{\ac}=\mathcal{C}(sE,s^2R)$ of $\Agd$ is cogenerated by $sh, s\delta$ and $s\mu$ with the corelations $s^2R$. 
Therefore, the linear dual operad $(\Agd^{\ac})^*=\ip(s^{-1}E^*,(s^2R)^{\bot})$ is a quadratic operad generated by $s^{-1}h^*, s^{-1}\delta^* , s^{-1}\mu^*$ and satisfies the relations $(s^2R)^{\bot}$ spanned by the following elements:
\begin{equation*}
	\begin{aligned}
		&s^{-1}\mu^*\ucr 1s^{-1}\mu^*+s^{-1}\mu^*\ucr 2s^{-1}\mu^*, &&~~s^{-1}\mu^*\ucr 1s^{-1}h^*+s^{-1}h^*\ucr 1s^{-1}\mu^*,\\  &s^{-1}\mu^*\ucr 1s^{-1}\d^*+s^{-1}\d^*\ucr 1s^{-1}\mu^*,
		&&s^{-1}\mu^*\ucr 2s^{-1}h^*+s^{-1}\d^*\ucr 1s^{-1}\mu^*-s^{-1}h^*\ucr 1s^{-1}\mu^*,\\
		&s^{-1}\mu^*\ucr 2s^{-1}\d^*,  &&~~s^{-1}h^*\ucr 1s^{-1}h^*,\\
		&s^{-1}h^*\ucr 1s^{-1}\d^*,  &&~~s^{-1}\d^*\ucr 1s^{-1}\d^*,\\ &s^{-1}\d^*\ucr 1s^{-1}h^*.&&
	\end{aligned} 			
\end{equation*}
By Proposition \ref{1-prop-Koszul dual operad}, we have $\Agd^!=(S^{c})^*\underset{\rm H}{\ot}(\Agd^{\ac})^*$. 
Denote 
\[\bar\mu\colon=v_2{\ot}s^{-1}\mu^*, \quad \bar h\colon=v_1{\ot}s^{-1}h^*, \quad {\rm and}\quad \bar\delta\colon=v_1{\ot}s^{-1}\delta^*.\]
Clearly, $|\bar\mu|=0$ and $|\bar h|=|\bar\delta|=-1$. By direct calculation, we know that $\Agd^!$ can be generated by $\bar\mu$, $\bar h$, and $\bar\delta$ with the relations \eqref{2-eq-generate relation of dual operad}.
\end{proof}

The operad $\Agd^{!}$ is also a quadratic operad that is generated by $\N$-Mod 
\[\bar E=(0,\Bbbk \bar h\oplus \Bbbk \bar \delta, \Bbbk \bar\mu,0,\cdots)\]
with the relation $R^\perp$ spanned by the elements in \eqref{2-eq-generate relation of dual operad}.
Define  the ordering of generators of $\Agd^{!}$ by
$\bar\delta< \bar h< \bar \mu$,  one can similarly show that all of the corresponding critical tree monomials are confluent.
It follows from Proposition \ref{1.3-prop-basis of koszul operad} that the tree monomials of $\mathcal{F}(\bar E)$ that are reduced with respect to  the identities in \eqref{2-eq-generate relation of dual operad} form a basis of the quotient operad $\mathcal{F}(\bar E)/(R^\perp)$.

\begin{prop}\label{2-prop-def of koszul dual operad of Agd}
Let $\Agd^!$ be the Koszul dual operad of the AsGDer operad $\Agd$. Then 
\[\As^!(n)= \Bbbk\bar\mu_n \oplus \Bbbk \bar h_n\oplus\Bbbk \bar\delta_n\]
with $|\bar\mu_{n}|=0$ and $|\bar h_n|=|\bar\delta_n|=-1$ for $n\ge 1$, $\bar\mu_1\in \Agd(1)$ is the identity, and the partial composition is given by
\begin{equation}\label{2-eq-partial comp of koszul dual of Agd}\tag{E2.2.2}
\begin{array}{lll}
	\bar\mu_n\ucr i\bar\mu_m=\bar\mu_{m+n-1}, & \bar h_n\ucr i\bar\mu_m=\bar h_{m+n-1}, & \bar\delta_n\ucr i\bar\mu_m=\bar\delta_{m+n-1},\\
	\bar\mu_n\ucr i \bar h_m=\begin{cases}
		\bar h_{m+n-1}, &i=1,\\
		\bar\delta_{m+n-1}+\bar h_{m+n-1}, & 2\leq i\leq n,
	\end{cases} & \bar h_n\ucr i\bar h_m=0, & \bar\delta_n\ucr i\bar h_m=0,\\
\bar\mu_n\ucr i\bar\delta_m=\begin{cases}
	\bar\delta_{m+n-1}, &i=1\\
	0, &2\leq i\leq n,
\end{cases} & \bar h_n\ucr i\bar\delta_m=0, & \bar\delta_n\ucr i\bar\delta_m=0.
\end{array}
\end{equation}
In particular, $\bar h_1=\bar h$, $\bar\delta_1=\bar\delta$ and $\bar \mu_2=\bar \mu$ as in Theorem \ref{2-thm-Koszul dual of Agd}.
\end{prop}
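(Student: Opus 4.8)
The plan rests on Theorem~\ref{2-thm-Koszul dual of Agd}, which already presents $\Agd^!$ as the quadratic operad $\F(\bar E)/(R^\perp)$ generated by the degree-$0$ binary operation $\bar\mu$ and the degree-$(-1)$ unary operations $\bar h,\bar\delta$, modulo the nine relations \eqref{2-eq-generate relation of dual operad}. Since $\Agd^!$ is given by this presentation, I would carry out every computation inside $\F(\bar E)/(R^\perp)$, where the partial composition is bilinear and the only tools are the associativity axiom (OP2) and the relations \eqref{2-eq-generate relation of dual operad}; note that all operadic signs coming from the suspension $\mathcal{S}^c$ in $\Agd^!=(\mathcal{S}^c\underset{\rm H}{\otimes}\Agd^{\ac})^*$ have already been absorbed into the coefficients of \eqref{2-eq-generate relation of dual operad}, so no further Koszul signs appear and the final table is sign-free. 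The proof then splits into two tasks: exhibiting a basis of each $\Agd^!(n)$, and verifying the composition table \eqref{2-eq-partial comp of koszul dual of Agd}.

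For the basis, I would set $\bar\mu_1:=\1$, $\bar\mu_n:=\bar\mu\ucr1\bar\mu_{n-1}$ for $n\ge2$ (well defined by the associativity relation $\bar\mu\ucr1\bar\mu=\bar\mu\ucr2\bar\mu$), and $\bar h_n:=\bar h\ucr1\bar\mu_n$, $\bar\delta_n:=\bar\delta\ucr1\bar\mu_n$, so that the degrees are as claimed and $\bar h_1=\bar h$, $\bar\delta_1=\bar\delta$, $\bar\mu_2=\bar\mu$. First I would show that these span: an arbitrary tree monomial reduces, modulo \eqref{2-eq-generate relation of dual operad}, to a $\Bbbk$-combination of $\bar\mu_n,\bar h_n,\bar\delta_n$, because the relations among the $\bar\mu$'s collapse every pure-$\bar\mu$ monomial to $\bar\mu_n$, the identities $\bar\mu\ucr1\bar h=\bar h\ucr1\bar\mu$, $\bar\mu\ucr2\bar h=\bar\delta\ucr1\bar\mu+\bar h\ucr1\bar\mu$ and $\bar\mu\ucr1\bar\delta=\bar\delta\ucr1\bar\mu$ let me rewrite a single unary generator in terms of the root forms $\bar h_n,\bar\delta_n$, and any monomial carrying two unary generators dies through one of the vanishing composites $\bar h\ucr1\bar h$, $\bar h\ucr1\bar\delta$, $\bar\delta\ucr1\bar h$, $\bar\delta\ucr1\bar\delta$ or the factor $\bar\mu\ucr2\bar\delta=0$. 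Hence $\dim\Agd^!(n)\le3$. For independence I would invoke the Gr\"obner basis established in the paragraph preceding this proposition together with Proposition~\ref{1.3-prop-basis of koszul operad}: the reduced tree monomials form a $\Bbbk$-basis, and counting them shows there are exactly three in each arity, whence $\dim\Agd^!(n)=3$ and the spanning set $\{\bar\mu_n,\bar h_n,\bar\delta_n\}$ is therefore a basis.

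For the composition table I would prove the nine formulas by induction on arity, using only (OP2) and \eqref{2-eq-generate relation of dual operad}. The three compositions with inner operation $\bar\mu_m$ are immediate, since the root symbol is untouched and the $\bar\mu$-parts merge associatively, producing $\bar\mu_{m+n-1}$, $\bar h_{m+n-1}$ or $\bar\delta_{m+n-1}$. The four composites $\bar h_n\ucr i\bar h_m$, $\bar\delta_n\ucr i\bar h_m$, $\bar h_n\ucr i\bar\delta_m$, $\bar\delta_n\ucr i\bar\delta_m$ vanish: writing the inner operation over $\bar\mu_m$ and transporting its unary head to the root lands two unary generators at the root, which is zero. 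The two remaining families are the heart of the matter: rewriting $\bar h_m=\bar h\ucr1\bar\mu_m$ and reassociating reduces $\bar\mu_n\ucr i\bar h_m$ to $(\bar\mu_n\ucr i\bar h)\ucr i\bar\mu_m$, so everything comes down to computing $\bar\mu_n\ucr i\bar h$ and $\bar\mu_n\ucr i\bar\delta$, i.e.\ transporting one unary generator from the $i$-th leaf of the comb $\bar\mu_n$ to the root. At $i=1$ the position-one relations move it with no correction, giving $\bar h_n$ (resp.\ $\bar\delta_n$); for $2\le i\le n$ the relation $\bar\mu\ucr2\bar h=\bar\delta\ucr1\bar\mu+\bar h\ucr1\bar\mu$ contributes the extra summand $\bar\delta_n$, while $\bar\mu\ucr2\bar\delta=0$ annihilates the $\bar\delta$-case.

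The step I expect to be the main obstacle is exactly this transport through the comb: one has to run the induction on $n$ and on $i$ so that a single unary head placed at the $i$-th leaf climbs to the root producing precisely one correction term $\bar\delta_n$ (for $\bar h$ and $i\ge2$) and nothing more. The delicate point is that the $\bar\delta$ correction, once created by $\bar\mu\ucr2\bar h$, must thereafter travel upward only through the clean position-one relation $\bar\mu\ucr1\bar\delta=\bar\delta\ucr1\bar\mu$; if it ever drifted to a right-hand slot it would be killed by $\bar\mu\ucr2\bar\delta=0$, so the induction must be organized to keep it on the left branch. I would therefore first prove the clean base identity $\bar\mu_n\ucr1\bar h=\bar h_n$ and then feed it into the inductive step for general $i$, after which the accumulation of correction terms becomes transparent and the remaining formulas follow formally.
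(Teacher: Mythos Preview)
Your proposal is correct and follows essentially the same approach as the paper: invoke the Gr\"obner basis from the preceding paragraph together with Proposition~\ref{1.3-prop-basis of koszul operad} to identify the three reduced monomials $\bar\mu_n,\bar h_n,\bar\delta_n$ as a basis of $\Agd^!(n)$, and then read off the composition table by direct rewriting using \eqref{2-eq-generate relation of dual operad}. The paper's own proof is only two sentences, simply naming the reduced monomials and saying ``by direct computation'' for \eqref{2-eq-partial comp of koszul dual of Agd}; your inductive transport-through-the-comb argument is precisely what that direct computation amounts to. Two minor remarks: (i) the paper writes $\bar\mu_n$ as the right-comb $(\cdots(\bar\mu\ucr2\bar\mu)\ucr3\bar\mu)\cdots)\ucr{n-1}\bar\mu$, which is the actual reduced monomial for the ordering $\bar\delta<\bar h<\bar\mu$, whereas your left-comb $\bar\mu\ucr1\bar\mu_{n-1}$ is only equal to it modulo the ideal---this is harmless for the composition computations but slightly obscures the basis statement; (ii) your separate spanning argument is redundant once the Gr\"obner basis is in hand, since Proposition~\ref{1.3-prop-basis of koszul operad} already gives both spanning and independence.
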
  
\begin{proof}
For each $n\geq 1$, there are only three monomials   
\[\bar\mu_n:=(\cdots(\bar\mu\ucr 2\bar\mu)\ucr 3\bar\mu)\ucr 4\cdots)\ucr {n-1}\bar\mu, \quad \bar h_n:=\bar h\ucr 1\bar\mu_n, \quad \bar\delta_n:=\bar\delta\ucr 1 \bar\mu_n.\] 
in $\Agd(n)$ which are  reduced with respect to relations $\eqref{2-eq-generate relation of dual operad}$.
It follows that $\Agd^!(n)$ has a basis $\{\bar\mu_n, \bar h_n, \bar\delta_n\}$ for all $n\ge 1$. 
By direct computation, we get the partial composition as in \eqref{2-eq-partial comp of koszul dual of Agd}.
\end{proof}

\subsection{Free AsGDer triple}

In this part, we study the free object in the category $\mathbf{AsGDer}$.
Let $V$ be a vector space over $\Bbbk$. We denote $\dbar{V}=\Bbbk\langle x,y\rangle \ot V$, 
where  $\Bbbk\langle x, y\rangle$ is the free associative algebra generated by two variables $x$ and $y$.
For convenience, we denote by $gv$ the homogeneous element $g\otimes v\in \Bbbk\langle x, y\rangle \ot V$. 

Let $\overline{T}(\dbar{V})$ be the reduced tensor algebra.
Define the linear maps $h,\delta: \overline{T}(\dbar{V})\to \overline{T}(\dbar{V})$ as follows:
\begin{align*}
	h(g_1v_1\ot g_2v_2\cdots\ot g_nv_n)&=\sum_{i=1}^ng_1v_1\ot \cdots\ot (xg_i) v_i\otimes\cdots \ot g_nv_n,\\
	\delta(g_1v_1\ot g_2v_2\cdots\ot g_nv_n)&=(yg_1)v_1\ot g_2v_2\cdots\ot g_nv_n+\sum_{i=2}^ng_1v_1\ot \cdots\ot (xg_i)v_i\otimes\cdots \ot g_nv_n.	
\end{align*}
for all $g_i\in \Bbbk\langle x,y\rangle $ and $v_i\in V$. By an easy calculation, we have that $(\overline{T}(\Bbbk\langle x,y\rangle \ot V),h,\delta)$ is an AsGDer triple.

\begin{prop}\label{2-prop-free asgder}
Retain the above notation. Let $V$ be a vector space over $\Bbbk$. 
Then $(\overline{T}(\dbar{V}),h,\delta)$ is a free object in the category  $\mathbf{AsGDer}$. 
To be precise, for any AsGDer $(B, h_B,\delta_B)$ and any linear map $\varphi\colon V\to B$, there exists a unique morphism 
$\overline{\varphi}\colon  \overline{T}(\dbar{V}) \to B$ of AsGDers such that the following diagram commutes:
\[\xymatrix{
V \ar[r]^-{i}\ar[dr]_\varphi & \overline{T}(\dbar{V})\ar@{-->}[d]^{\bar \varphi}\\
& B}\]
where the linear map $i$ is defined by $i(v)=1v=1\otimes v\in \dbar{V}\subset \overline{T}(\dbar{V})$ for $v\in V$.	
\end{prop}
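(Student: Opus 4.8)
The plan is to exploit the freeness of the reduced tensor algebra $\overline{T}(\dbar{V})$ as an associative algebra, and to pin down the values of $\bar\varphi$ on the generating space $\dbar{V}=\Bbbk\langle x,y\rangle\ot V$ that are forced by compatibility with $h$ and $\delta$. First I observe that for $n=1$ the defining formulas reduce to $h(gv)=(xg)v$ and $\delta(gv)=(yg)v$; hence, writing a word $g=z_1z_2\cdots z_k$ in the alphabet $\{x,y\}$, one has $gv=F_{z_1}F_{z_2}\cdots F_{z_k}(1\ot v)$ inside $\overline{T}(\dbar{V})$, where $F_x:=h$ and $F_y:=\delta$. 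Any morphism of AsGDer triples must therefore send $gv$ to $G_{z_1}G_{z_2}\cdots G_{z_k}(\varphi(v))$, where $G_x:=h_B$ and $G_y:=\delta_B$. I take this composite as the \emph{definition} of $\bar\varphi$ on the basis $\{gv\}$ of $\dbar{V}$ (well defined because the words in $x,y$ form a $\Bbbk$-basis of $\Bbbk\langle x,y\rangle$), and then extend $\bar\varphi$ to an algebra homomorphism on all of $\overline{T}(\dbar{V})$ via the universal property of the tensor algebra. The relation $\bar\varphi\circ i=\varphi$ is immediate, since $i(v)=1\ot v$ corresponds to the empty composite.

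The crux is to verify that $\bar\varphi$ commutes with $h$ and $\delta$, carried out in two steps. For $h$: both $\bar\varphi\circ h$ and $h_B\circ\bar\varphi$ are $\bar\varphi$-derivations from $\overline{T}(\dbar{V})$ to the bimodule $B$, because $h$ and $h_B$ are derivations and $\bar\varphi$ is multiplicative. By construction they agree on the generators $gv$, since prepending $x$ to $g$ corresponds to applying $h_B$; as a $\bar\varphi$-derivation is determined by its values on a generating set, the two maps coincide on all of $\overline{T}(\dbar{V})$.

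For $\delta$ the argument is more delicate, since $\delta$ is only a generalized derivation. I will introduce the error map $\Phi:=\bar\varphi\circ\delta-\delta_B\circ\bar\varphi$. Using the generalized derivation rules $\delta(ab)=\delta(a)b+a\,h(b)$ on the source and $\delta_B(\bar\varphi(a)\bar\varphi(b))=\delta_B(\bar\varphi(a))\bar\varphi(b)+\bar\varphi(a)\,h_B(\bar\varphi(b))$ on $B$, together with the $h$-compatibility just established, a short computation collapses the mixed terms and yields $\Phi(ab)=\Phi(a)\,\bar\varphi(b)$ for all $a,b$. Since $\Phi$ vanishes on each generator $gv$ (because prepending $y$ to $g$ corresponds to applying $\delta_B$), an induction on tensor length using this left-multiplicative recursion forces $\Phi\equiv 0$, so $\bar\varphi\circ\delta=\delta_B\circ\bar\varphi$. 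Finally, uniqueness follows from the same observation that drove the definition: any AsGDer morphism agreeing with $\varphi$ on $V$ is an algebra homomorphism whose values on $\dbar{V}$ are dictated by the factorization $gv=F_{z_1}\cdots F_{z_k}(1\ot v)$ and by compatibility with $h$ and $\delta$, hence equals $\bar\varphi$. I expect the main obstacle to be precisely the $\delta$-compatibility: because $\delta$ is a generalized rather than an ordinary derivation, one cannot conclude from a single ``derivations agree on generators'' principle, and must instead bootstrap from the $h$-case and control the left-multiplicative error term $\Phi$.
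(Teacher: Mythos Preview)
Your proposal is correct and follows essentially the same route as the paper: define $\bar\varphi$ on $\dbar V$ by sending a word $g$ in $x,y$ acting on $v$ to the corresponding composite of $h_B,\delta_B$ applied to $\varphi(v)$, extend multiplicatively via the universal property of $\overline{T}(\dbar V)$, and then check compatibility with $h$ and $\delta$. The paper simply asserts that the compatibilities are ``easy to verify,'' whereas you actually carry them out; in particular your error-map argument $\Phi(ab)=\Phi(a)\,\bar\varphi(b)$ for the generalized derivation $\delta$ is exactly the missing detail, and it is correct.
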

\begin{proof}
We define a linear map $\varphi':\Bbbk\langle x,y\rangle \ot V\ra B$ as follows: for any $g=x^{i_1}y^{j_1}\cdots x^{i_s}y^{j_s} \in \Bbbk\langle x,y\rangle$ $(i_k, j_k\ge 0)$, and $v\in V$,
\[\varphi'(gv):=(h_B^{i_1}\circ \delta_B^{j_1}\circ \cdots\circ  h_B^{i_s}\circ\delta_B^{j_s}\circ \varphi )(v),\]
where $h_B^{k}$ is the composition of $k$ copies of $h_B$ for $k\ge 0$. Clearly, $\varphi'\circ i=\varphi$.
	
Since $\overline{T}(\dbar{V})$ is a free associative algebra, $\varphi'$ induces an associative algebra morphism $\overline{\varphi}:\overline{T}(\dbar{V})\to B$. Clearly, $\overline{\varphi}\circ i=\varphi$.   
It is easy to verify that $h_B\circ\overline{\varphi}=\overline{\varphi}\circ h$ and $\delta_B\circ\overline{\varphi}=\overline{\varphi}\circ\delta$,
and therefore $\overline{\varphi}$ is a morphism of AsGDer triples. 
	
It remains to  show that $\overline{\varphi}$ is unique.
Assume that there is another morphism $\widehat{\varphi}\colon \overline{T}(\dbar{V})\to B$ of  AsGDer triples such that $\widehat{\varphi}\circ i=\varphi$, then we have
\[\widehat{\varphi}(1v)=\varphi(v)=\overline{\varphi}(1v),\quad h_B\c\widehat{\varphi}=\widehat{\varphi}\circ h,
\quad \delta_B\circ\widehat{\varphi}=\widehat{\varphi}\circ\delta.\]
It follows that 
\[\widehat{\varphi}((xg)v)=\widehat{\varphi}\c h(gv)=h_B\c\widehat{\varphi}(gv) \quad \widehat{\varphi}(ygv)=\widehat{\varphi}\c\d(gv)=\delta_B\c\widehat{\varphi}(gv),\]
for any $g\in\Bbbk\langle x,y\rangle$ and $v\in V$. Therefore, we have
\[\widehat{\varphi}(gv)=h_B^{i_1}~\delta_B^{j_1}~\cdots~ h_B^{i_s}~\delta_B^{j_s}\varphi(v)=\overline{\varphi}(gv)\]
for $g=x^{i_1}y^{j_1}\cdots x^{i_s}y^{j_s} \in \Bbbk\langle x,y\rangle$, and $v\in V$. 
Therefore, we have $\widehat{\varphi}=\overline{\varphi}$ since $\overline{T}(\dbar{V})$ is a free associative algebra.
\end{proof}

\section{homotopy AsGDer Triples}

In this section, we consider $\As_{\infty}$-algebras, that is, homotopy AsGDer triples. 
For this, we first recall some notation on cooperad.
Let $M$, $N$ and $N'$ be $\mathbb{N}$-modules. The right linear analog $M\c(N,N')$ of the composition product is an $\mathbb{N}$-module defined by:
\[
[M\c(N,N')](n):=\bigoplus_k M(k)\otimes \left(\bigoplus_{i_1+\cdots+i_k=n}\bigoplus_{j=1}^k N(i_1)\otimes\cdots \otimes N'(i_j)\otimes\cdots \otimes N(i_k)\right).
\]
For $\mathbb{N}$-module morphisms $f : M \ra M'$ and $g : N \ra N'$ ,  the infinitesimal composite 
\[f\c' g:M\c N\ra M'\c(N,N')\]
is  defined as
\[f\c' g\colon=\sum_{i}f\ot (\Id_N^{\ot i-1}\ot g\ot \Id_N^{\ot n-i} ).\]

Let $(\ic,\Delta,\varepsilon)$ be a ns cooperad. For convenience, we denote $\ic \ucr{(1)} \ic\colon = \ic\circ (\I, \ic)$ and 
\[\alpha\bc{i}\beta\colon=\alpha\otimes(\1^{\ot i-1}\ot \beta\ot \1^{\ot n-i}) \in \ic\ucr{(1)}\ic\]
for all $\alpha\in \ic(n)$ and $\beta\in \ic(m)$. The infinitesimal decomposition map $\Delta_{(1)} :\ic\to \ic\ucr{(1)}\ic$ is defined by
the composite map
\[ \ic \xrightarrow{\Delta} \ic \circ \ic \xrightarrow{\Id_{\ic}\c'\Id_{\ic}}\ic\c(\ic,\ic) \xrightarrow{\Id_{\ic}\c(\varepsilon,\Id_{\ic})}\ic\c(\I,\ic)= \ic \ucr{(1)} \ic,\]
see \cite[Section 6.1]{LV}.
For convenience, we denote $\Delta_{(1)}(\theta)=\sum_{i} \theta_{(1)}\bc{i}\theta_{(2)}$ for all $\theta\in \ic(n)$.
Note that $\art(\theta_{(1)})+\art(\theta_{(2)})=n+1$.

\subsection{Koszul dual cooperad $\Agd^{\ac}$}

Let $\mathcal{S}^{-c}$ be the linear dual cooperad of the endomorphism operad $\End_{\Bbbk s}$. To be precise, $\mathcal{S}^{-c}(n)=\Bbbk \alpha_n$ with $|\alpha_n|=n-1$, 
and the infinitesimal decomposition map is defined by:
\[\Delta_{(1)} \bigl(\alpha_{n}\bigr)= \sum_{p+q+r=n}(-1)^{r(q-1)}   \alpha_{p+r+1}\bc{p+1}\alpha_q.\]		
We consider the ns cooperad $\mathcal{S}^{ c}$ given by  
$\mathcal{S}^{c}(n)=\Bbbk\beta_n$ with $|\beta_n|=1-n$ and the infinitesimal decomposition map 
\[\Delta_{(1)} \bigl(\beta_{n}\bigr)= \sum_{p+q+r=n}(-1)^{p(q-1)}  \beta_{p+r+1}\bc{p+1}\beta_q.\]		
Clearly $\Agd^!=(\mathcal{S}^{c}\underset{\rm H}{\ot}\Agd^{\ac})^*$ and $(\Agd^!)^*=\mathcal{S}^{c}\underset{\rm H}{\ot}\Agd^{\ac}$.  
Observe that there are isomorphisms of cooperads
\[\mathcal{S}^{-c}\underset{\rm H}{\ot}\mathcal{S}^c\cong \ias^* \quad {\rm and} \quad \ias^*\underset{\rm H}{\ot} \ic\cong \ic\] 
for any ns cooperad $\ic$, and therefore we have the isomorphism 
\begin{align}\label{3.1-eq-Koszul dual cooperad and dual of Koszul dual operad}\tag{E3.1.1}
\Agd^{\ac}\cong \ias^*\underset{\rm H}{\ot}\Agd^{\ac}\cong \mathcal{S}^{- c}\underset{\rm H}{\ot}\mathcal{S}^c\underset{\rm H}{\ot}\Agd^{\ac}\cong \mathcal{S}^{-c}\underset{\rm H}{\ot}(\As^!)^*
\end{align}
of ns cooperads.

By Proposition \ref{2-prop-def of koszul dual operad of Agd},  
we know that the cooperad $(\As^!)^*$ is given by 
\[(\As^!)^*(n)= \Bbbk\bar{\mu}^*_n \oplus \Bbbk \bar{h}^*_n\oplus\Bbbk \bar{\delta}^*_n,\quad ({\rm for}~n\geq 1)
\]
and the infinitesimal decomposition map 
\begin{align*}
	\Delta_{(1)}(\bar{\mu}^*_n)&=\sum_{p+q+r=n}\bar{\mu}^*_{p+r+1}\bc {p+1}\bar{\mu}^*_{q},\\
	\Delta_{(1)}(\bar{h}^*_n)&=\sum_{p+q+r=n}
	\big(\bar{\mu}^*_{p+r+1}\bc {p+1}\bar{h}^*_{q}+\bar{h}^*_{p+r+1}\bc {p+1}\bar{\mu}^*_{q}\big),\\	
	\Delta_{(1)}(\bar{\delta}^*_n)&=\sum_{p+q+r=n}\bar{\delta}^*_{p+r+1}\bc {p+1}\bar{\mu}^*_{q}+
	\sum_{q+r=n}\bar{\mu}^*_{r+1}\bc {1}\bar{\delta}^*_{q}+\sum_{p+q+r=n\atop p\geq1}
	\bar{\mu}^*_{p+r+1}\bc {p+1}\bar{h}^*_{q}.
\end{align*}
It follows that the cooperad $S^{-c}\underset{\rm H}{\ot}(\Agd^!)^*$ is given by  
\begin{align*}
	(S^{-c}\underset{\rm H}{\ot}(\Agd^!)^*)(n)&= \Bbbk (\alpha_n\ot\bar{\mu}^*_n) \oplus \Bbbk(\alpha_n\ot \bar{h}^*_n) \oplus\Bbbk(\alpha_n\ot\bar{\d}^*_n) \quad ({\rm for} ~n\geq 1),	
\end{align*}
and the infinitesimal decomposition map 
\begin{align*}
	\Delta_{(1)}( \alpha_n\ot\bar{\mu}^*_n)&=\sum_{p+q+r=n}(-1)^{r(q-1)}(\alpha_{p+r+1}\ot\bar{\mu}^*_{p+r+1})\bc {p+1}(\alpha_{q}\ot\bar{\mu}^*_{q}),\\
	\Delta_{(1)}(\alpha_n\ot \bar{h}^*_n)&=\sum_{p+q+r=n}
	(-1)^{r(q-1)}(\alpha_{p+r+1}\ot\bar{\mu}^*_{p+r+1})\bc {p+1}(\alpha_{q}\ot \bar{h}^*_{q})\\
	&+\sum_{p+q+r=n}(-1)^{(r+1)(q-1)}(\alpha_{p+r+1}\ot \bar{h}^*_{p+r+1})\bc {p+1}(\alpha_{q}\ot\bar{\mu}^*_{q}),\\	
	\Delta_{(1)}(\alpha_n\ot \bar{\delta}^*_n)&=\sum_{p+q+r=n}(-1)^{(r+1)(q-1)}(\alpha_{p+r+1}\ot \bar{\delta}^*_{p+r+1})\bc {p+1}(\alpha_{q}\ot\bar{\mu}^*_{q})\\
	&+\sum_{q+r=n}(-1)^{r(q-1)}(\alpha_{r+1}\ot\bar{\mu}^*_{r+1})\bc {1}(\alpha_{q}\ot \bar{\delta}^*_{q})\\
	&+\sum_{p+q+r=n\atop p\geq1}(-1)^{r(q-1)}(\alpha_{p+r+1}\ot\bar{\mu}^*_{p+r+1})\bc {p+1}(\alpha_{q}\ot \bar{h}^*_{q}).
\end{align*}
By the isomorphism \eqref{3.1-eq-Koszul dual cooperad and dual of Koszul dual operad} of cooperads, we denote by ${\mu}^c_n$, $h^c_n$ and  $\d^c_n$  the isomorphic image of $\alpha_n\ot\bar{\mu}^*_n$
$\alpha_n\ot \bar{h}^*_n$ and $\alpha_n\ot \bar{\delta}^*_n$, respectively. 
Then we obtain the explicit description of the Koszul dual cooperad of $\Agd$.

\begin{prop}\label{3-prop-def of Koszul dual cooperad }
Let $\Agd^{\ac}$ be the Koszul dual cooperad of $\Agd$. Then
\[
\Agd^{\ac}(n)= \Bbbk \mu^c_n\oplus \Bbbk h^c_n \oplus \Bbbk  \d^c_n \quad ({\rm for} ~ n\geq 1), 	
\]
with $|\mu^c_n|=n-1$, $|h^c_n|=|\d^c_n|=n$, the counit $\varepsilon$ is given by $\varepsilon(\mu^c_1)=\1\in \I$,
and the infinitesimal decomposition is given by
\begin{align*}
	\Delta_{(1)}( \mu^c_n)=& \sum_{p+q+r=n}(-1)^{r(q-1)}\mu^c_{p+r+1}\bc {p+1}\mu^c_{q},\\
	\Delta_{(1)}(h^c_n)=&\sum_{p+q+r=n}
	(-1)^{r(q-1)}\mu^c_{p+r+1}\bc {p+1} h^c_{q}+\sum_{p+q+r=n}(-1)^{(r+1)(q-1)} h^c_{p+r+1}\bc {p+1}\mu^c_{q},\\	
	\Delta_{(1)}( \d^c_n)=&\sum_{p+q+r=n}(-1)^{(r+1)(q-1)} \d^c_{p+r+1}\bc {p+1}\mu^c_{q}
	+\sum_{q+r=n}(-1)^{r(q-1)}\mu^c_{r+1}\bc {1}\d^c_{q}\\
	&+\sum_{p+q+r=n\atop p\geq1}(-1)^{r(q-1)}\mu^c_{p+r+1}\bc {p+1} h^c_{q}.
\end{align*}
\end{prop}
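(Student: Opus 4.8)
The plan is to read off $\Agd^{\ac}$ by transporting structure along the isomorphism \eqref{3.1-eq-Koszul dual cooperad and dual of Koszul dual operad}, that is, $\Agd^{\ac}\cong \mathcal{S}^{-c}\underset{\rm H}{\ot}(\Agd^!)^*$, rather than computing the quadratic cooperad $\mathcal{C}(sE,s^2R)$ from its defining universal property, which would be more cumbersome. All the ingredients are already in place: Proposition \ref{2-prop-def of koszul dual operad of Agd} records the full partial-composition table \eqref{2-eq-partial comp of koszul dual of Agd} of $\Agd^!$, and $\mathcal{S}^{-c}$ has one-dimensional components $\Bbbk\alpha_n$ with $|\alpha_n|=n-1$ and infinitesimal decomposition carrying the sign $(-1)^{r(q-1)}$. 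So the first step is to dualize Proposition \ref{2-prop-def of koszul dual operad of Agd}: the components of $(\Agd^!)^*$ are spanned by $\bar\mu^*_n,\bar h^*_n,\bar\delta^*_n$, and $\Delta_{(1)}$ on $(\Agd^!)^*$ is the transpose of the partial compositions. Concretely, each entry $\theta_1\ucr i\theta_2=\theta_3$ of \eqref{2-eq-partial comp of koszul dual of Agd} contributes a summand $\theta_1^*\bc i\theta_2^*$ to $\Delta_{(1)}(\theta_3^*)$; here I must remember that the single relation $\bar\mu_n\ucr i\bar h_m=\bar\delta_{m+n-1}+\bar h_{m+n-1}$ feeds both $\Delta_{(1)}(\bar h^*)$ and $\Delta_{(1)}(\bar\delta^*)$. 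Reading off all nonzero entries reproduces the three (sign-free) formulas for $\Delta_{(1)}$ on $\bar\mu^*_n,\bar h^*_n,\bar\delta^*_n$ displayed before the proposition.

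Second, I would assemble the Hadamard product $\mathcal{S}^{-c}\underset{\rm H}{\ot}(\Agd^!)^*$. The underlying spaces are immediate, namely $\Bbbk(\alpha_n\ot\bar\mu^*_n)\oplus\Bbbk(\alpha_n\ot\bar h^*_n)\oplus\Bbbk(\alpha_n\ot\bar\delta^*_n)$, and degrees add: since linear dualization flips $|\bar h_n|=|\bar\delta_n|=-1$ to $+1$ and fixes $|\bar\mu_n|=0$, one gets $|\alpha_n\ot\bar\mu^*_n|=n-1$ and $|\alpha_n\ot\bar h^*_n|=|\alpha_n\ot\bar\delta^*_n|=n$. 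The counit sends $\alpha_1\ot\bar\mu^*_1$ to $\1$. Renaming $\mu^c_n:=\alpha_n\ot\bar\mu^*_n$, $h^c_n:=\alpha_n\ot\bar h^*_n$, and $\d^c_n:=\alpha_n\ot\bar\delta^*_n$ then matches the statement, so the claims about the component spaces, degrees, and counit follow at once.

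The heart of the argument, and the step I expect to be the most delicate, is the sign bookkeeping in the infinitesimal decomposition of the Hadamard product. By construction the coproduct of $\ic\underset{\rm H}{\ot}\ic'$ is $\Delta_{(1)}\ot\Delta_{(1)}$ followed by the canonical projection matching cofactors of equal arity placed in the same slot, and this matching braids the right-hand $\mathcal{S}^{-c}$-cofactor $\alpha_q$ (of degree $q-1$) past the left-hand $(\Agd^!)^*$-cofactor. By the Koszul rule this contributes $(-1)^{(q-1)|x'_{(1)}|}$, so each term carries the $\mathcal{S}^{-c}$-sign $(-1)^{r(q-1)}$ multiplied by $1$ when the left cofactor is $\bar\mu^*$ (degree $0$) and by $(-1)^{q-1}$ when it is $\bar h^*$ or $\bar\delta^*$ (degree $1$). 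This is exactly what turns the exponent $r(q-1)$ into $(r+1)(q-1)$ for the term of $\Delta_{(1)}(\bar h^*_n)$ with left cofactor $\bar h^*$ and for the term of $\Delta_{(1)}(\bar\delta^*_n)$ with left cofactor $\bar\delta^*$, while leaving the $\mu^c$-headed terms unchanged. Verifying this one Koszul sign across all three decompositions is the only genuine computation; once it is confirmed, substituting the renamed generators yields the three displayed formulas and completes the proof.
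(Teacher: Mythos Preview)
Your proposal is correct and follows essentially the same route as the paper: dualize the partial-composition table of $\Agd^!$ from Proposition \ref{2-prop-def of koszul dual operad of Agd} to obtain $(\Agd^!)^*$, then compute the Hadamard product with $\mathcal{S}^{-c}$ via the isomorphism \eqref{3.1-eq-Koszul dual cooperad and dual of Koszul dual operad} and rename the basis. Your explicit tracking of the Koszul sign $(-1)^{(q-1)|x'_{(1)}|}$ arising from the swap in the Hadamard coproduct is exactly the mechanism producing the shift $r(q-1)\to(r+1)(q-1)$ on the $\bar h^*$- and $\bar\delta^*$-headed terms, and the paper carries out this same computation (though it records the outcome without spelling out the sign rule).
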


\subsection{Coderivations of a cofree $\Agd^{\ac}$-coalgebra}
Let $\ic$ be a ns cooperad and $C$ a $\ic$-coalgebra. 
Recall that a \textit{coderivation} on $C$ is a linear map $d_C\colon C \to C$ satisfying 
\[\Delta_C\circ d_C = \left(\bigoplus\Id_\ic \otimes (\Id_C^{\ot i}\otimes d_C \otimes \Id_C^{\ot n-i})\right)  \circ \Delta_C. \]
Denoted by $\Coder(C)$ the space of all coderivations on $C$. 
On the coderivations of the cofree $\ic$-coalgebra $\ic(V)$, we have the following characterization.

\begin{prop}\cite[Proposition 6.3.8]{LV}\label{3-prop-def of coderivation on cofree coalgebra}
Let $\ic$ be a ns cooperad and $\ic(V)$ a cofree $\ic$-coalgebra for a vector space $V$. Then there is an isomorphism of vector spaces
\[ \Coder(\ic(V))\cong \Hom(\ic(V), V).\]
To be precise,  any coderivation $d$ on $\ic(V)$ is completely determined by its projection onto the space of the cogenerators $\proj_V\circ d\colon \ic(V)\to V$.
Conversely, for any  map $\psi\colon \ic(V)\to V$, there exists a unique coderivation $d_\psi$ on $\mathcal{C}(V)$ given by
\[d_\psi([\theta|v_1, \cdots, v_n])\colon=\sum(-1)^\pm[\theta_{(1)}|v_1, \cdots, v_{i-1}, \psi([\theta_{(2)}| v_{i}, \cdots, v_{i+\art(\theta_{(2)})-1}]), v_{i+\art(\theta_{(2)})}, \cdots, v_n]\]
where $\pm=(|v_1|+\cdots+|v_{i-1}|)(|\psi|+|\theta_{(2)}|)$ and  $\Delta_\ic(\theta)=\sum \theta_{(1)}\bc{i}\theta_{(2)}$. 
\end{prop}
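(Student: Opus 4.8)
The plan is to establish the stated bijection by exhibiting maps in both directions and checking that they are mutually inverse. Define $\Phi\colon \Coder(\ic(V))\to \Hom(\ic(V),V)$ by $\Phi(d)=\proj_V\circ d$; this requires no verification, since a coderivation is in particular a linear map. All the content lies in producing the inverse $\Psi\colon \psi\mapsto d_\psi$ via the displayed formula and in proving three things: (i) $d_\psi$ is genuinely a coderivation, (ii) $\proj_V\circ d_\psi=\psi$, and (iii) every coderivation $d$ satisfies $d=d_{\proj_V\circ d}$. Throughout I would work arity by arity, writing a general element of $\ic(V)$ as a finite sum of terms $[\theta|v_1,\dots,v_n]$ with $\theta\in\ic(n)$, and treating the infinitesimal decomposition $\Delta_{(1)}(\theta)=\sum_i\theta_{(1)}\bc{i}\theta_{(2)}$ as the basic combinatorial device, noting $\art(\theta_{(1)})+\art(\theta_{(2)})=n+1$.

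For step (i) I would substitute the formula for $d_\psi$ into the coderivation identity
\[\Delta_{\ic(V)}\circ d_\psi=\left(\bigoplus\Id_\ic\ot(\Id^{\ot i}\ot d_\psi\ot\Id^{\ot (n-i)})\right)\circ\Delta_{\ic(V)}\]
and expand both sides on a generator $[\theta|v_1,\dots,v_n]$. Each side becomes a sum indexed by a choice of two nested cuts of the tree $\theta$: on the left one first applies $\psi$ to an inner factor and then comultiplies, while on the right one first comultiplies and then applies $\psi$ inside one of the resulting coalgebra factors. After reindexing, the two sums are matched term by term by invoking the coassociativity of $\Delta$ for the cooperad $\ic$, equivalently the compatibility of $\Delta_{(1)}$ with itself coming from the counital, coassociative comonoid structure. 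This is exactly where the sign $(-1)^{\pm}$ with $\pm=(|v_1|+\cdots+|v_{i-1}|)(|\psi|+|\theta_{(2)}|)$ is forced: it is the Koszul sign incurred when the operation $\psi$, together with the piece $\theta_{(2)}$ that it absorbs, is commuted past the inputs $v_1,\dots,v_{i-1}$. Step (ii) is then immediate: the unique summand of $d_\psi([\theta|v_1,\dots,v_n])$ whose outer label lies in $\ic(1)$ is the one with $\theta_{(1)}=\1$, and by counitality that summand is precisely $\psi([\theta|v_1,\dots,v_n])$; hence $\proj_V\circ d_\psi=\psi$, so $\Phi\circ\Psi=\Id$.

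For step (iii), and hence $\Psi\circ\Phi=\Id$, I would prove the sharper uniqueness statement that a coderivation is determined by its corestriction to $V$. Setting $\delta=d-d_{\proj_V\circ d}$, which is again a coderivation with $\proj_V\circ\delta=0$, I would show $\delta=0$ by induction on arity. The coderivation identity expresses $\Delta_{\ic(V)}\bigl(\delta([\theta|v_1,\dots,v_n])\bigr)$ entirely in terms of $\delta$ applied to factors whose comultiplicative depth is strictly smaller; since $\ic$ is reduced and the cofree coalgebra $\ic(V)$ is conilpotent, an element is recovered from the iterated corestrictions $\proj_V$ of all its comultiplication components. The base case $\proj_V\circ\delta=0$ is the hypothesis, and the inductive step propagates the vanishing up the arity filtration, forcing $\delta=0$ and therefore $d=d_{\proj_V\circ d}$.

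The main obstacle I anticipate is step (i): not its conceptual content, which is just coassociativity of $\Delta_\ic$, but the careful bookkeeping required to see that the double decomposition of $\theta$ on the two sides of the coderivation identity produces identical index sets and, crucially, identical Koszul signs. Arranging that the sign convention built into the definition of $d_\psi$ is exactly the one making the two sides coincide, rather than differ by a global arity-dependent sign, is the delicate point. I would handle it by fixing once and for all the Koszul rule for moving the degree-$(|\psi|+|\theta_{(2)}|)$ object past a prefix of inputs, and then checking that a single application of coassociativity respects this rule.
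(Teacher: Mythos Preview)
Your proof proposal is correct and follows the standard argument, but note that the paper does not actually prove this proposition: it is stated with the citation \cite[Proposition 6.3.8]{LV} and used as a black box, so there is no ``paper's own proof'' to compare against. Your outline---defining the restriction map $\Phi$, constructing the explicit inverse $\Psi$ via the displayed formula, and verifying (i) $d_\psi$ is a coderivation by coassociativity of $\Delta_\ic$, (ii) $\proj_V\circ d_\psi=\psi$ by counitality, and (iii) uniqueness via conilpotency and induction on arity---is exactly the standard approach, and is essentially how the result is established in Loday--Vallette.
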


\subsection{Homotopy AsGDer triples}
\begin{Def}
	Let $A$ be a graded vector space. An $\Agd_{\infty}$-\textit{algebra structure} on $A$ consists of collections of operations $\{m_n,h_n,\delta_n|n\geq1\}$ with  
	\[m_n\colon A^{\ot n}\ra A,~ |m_n|=n-2,\quad
	h_n\colon A^{\ot n}\ra A,~ |h_n|=n-1,\quad
	\d_n\colon A^{\ot n}\ra A,~ |\d_n|=n-1,\]
	satisfying the following relations
	\begin{align}
		&\sum_{p+q+r=n}(-1)^{p+qr}m_{p+r+1}\ucr {p+1}m_{q}=0,\label{3-def of homotopy m_n}\tag{E3.3.1}\\
		&\sum_{p+q+r=n}(-1)^{p+qr}m_{p+r+1}\ucr {p+1}h_{q}
		+\sum_{p+q+r=n}(-1)^{p+q(r+1)}h_{p+r+1}\ucr {p+1}m_{q}=0,\label{3-def of homotopy h_n}\tag{E3.3.2}\\	
		&\sum_{p+q+r=n\atop p\geq1}(-1)^{p+qr}m_{p+r+1}\ucr {p+1}h_{q}
		+\sum_{q+r=n}(-1)^{qr}m_{r+1}\ucr {1}\delta_{q}\label{3-def of homotopy delta_n}\tag{E3.3.3}\\
		&+\sum_{p+q+r=n}(-1)^{p+q(r+1)}\d_{p+r+1}\ucr {p+1}m_{q}=0,\nonumber	
	\end{align}
for $n\geq1$, with $-\ucr i-$  defined as in Equations \eqref{1-1-def-partial composition of map}.
The quadruple $(A, m_n, h_n , \delta_n)$ is called an \textit{$\Agd_\infty$-algebra}, or a \textit{homotopy AsGDer triple}.
\end{Def}
Observe that a graded vector space $A$ equipped with the operations $\{m_n\mid n\ge 1\}$ satisfying the relations \eqref{3-def of homotopy m_n}
is just an $A_\infty$-algebra. 

\begin{exm}
Let $(A, m_n)$ be an $A_\infty$-algebra. 
Fixing an element $a\in A$ with $|a|=0$ and $m_1(a)=0$, 
we define  $h_n=0$ and $\delta_n\colon A^{\ot n}\ra A$ as follows:
	\[\delta_n(b_1,\cdots,b_n) \colon=(-1)^{n+1}m_{n+1}(a,b_1,\cdots,b_n) \quad \text{for}\quad b_1,\cdots,b_n\in A. \]
Since $h_n=0 $ for all $n\geq 1$, the relations \eqref{3-def of homotopy h_n} are obviously satisfied.
For convenience, we denote $\delta_n$ by $(-1)^{n+1}m_{n+1}\ucr 1 a$, which means putting the element $a$ in the first position of linear map $m_{n+1}$. Clearly, we have $m_1\ucr 1 a=0$ since $m_1(a)=0$. Then for $n\geq 1$, we have
\begin{align*}
		&\sum_{p+q+r=n\atop p\geq1}(-1)^{p+qr}m_{p+r+1}\ucr {p+1}h_{q}
		+\sum_{q+r=n\atop q\geq1}(-1)^{qr}m_{r+1}\ucr {1}\delta_{q}
		+\sum_{p+q+r=n}(-1)^{p+q(r+1)}\d_{p+r+1}\ucr {p+1}m_{q}\\
		&=\sum_{q+r=n\atop q\geq1}(-1)^{qr}m_{r+1}\ucr {1}\delta_{q}
		+\sum_{p+q+r=n}(-1)^{p+q(r+1)}\d_{p+r+1}\ucr {p+1}m_{q}\\
		&=\sum_{q+r=n\atop q\geq1}(-1)^{qr+q+1}m_{r+1}\ucr {1}(m_{q+1}\ucr 1a)
		+\sum_{p+q+r=n}(-1)^{q(r+1)+r}(m_{p+r+2}\ucr 1a)\ucr {p+1}m_{q}\\
		&=\Big(\sum_{q+r=n\atop q\geq1}(-1)^{qr+q+1}m_{r+1}\ucr {1}m_{q+1} +\sum_{p+q+r=n}(-1)^{q(r+1)+r}m_{p+r+2}\ucr {p+2}m_{q}  \Big)\ucr 1a\\
		&=\Big(\sum_{p+q+r=n+1\atop p=0,q\geq2}(-1)^{(q-1)r+q}m_{p+r+1}\ucr {p+1}m_q+
		\sum_{p+q+r=n+1\atop p\geq1}(-1)^{q(r+1)+r}m_{p+r+1}\ucr {p+1} m_q\Big)\ucr 1a\\
		&=\Big(\sum_{p+q+r=n+1\atop p=0}(-1)^{(q-1)r+q}m_{p+r+1}\ucr {p+1}m_q+
		\sum_{p+q+r=n+1\atop p\geq1}(-1)^{q(r+1)+r}m_{p+r+1}\ucr {p+1} m_q\big)\ucr 1a\\	
		&=\big(\sum_{p+q+r=n+1}(-1)^{rq+r+q}m_{p+r+1}\ucr {p+1}m_q\Big)\ucr 1a\\
		&=(-1)^{n+1}\Big(\sum_{p+q+r=n+1}(-1)^{p+rq}m_{p+r+1}\ucr {p+1}m_q\Big)\ucr 1a=0.		
	\end{align*}
Therefore, the relations \eqref{3-def of homotopy delta_n} hold for all $n\ge 1$, and $(A, m_n, h_n, \delta_n)$ is an  $\Agd_{\infty}$-algebra.	
\end{exm}


\begin{thm}\label{3-thm-Agd_infty str=sq zero coder}
 	Let $A$ be a graded vector space.	An $\As_{\infty}$-algebra structure on  $A$ is equivalent to a square-zero coderivation on the cofree $\As^{\ac}$-coalgebra $\As^{\ac}(A)$.
\end{thm}
\begin{proof}
	Let $\f$ be a square-zero coderivation on the  $\As^{\ac}$-coalgebra $\As^{\ac}(A)$.
	By Proposition \ref{3-prop-def of coderivation on cofree coalgebra}, $\f$ is completely determined by the composite
	\[\As^{\ac}(A)\overset{\f}{\longrightarrow}\As^{\ac}(A)\overset{\pi}{\longrightarrow}A,\]
	where $\pi=\varepsilon_A: \As^{\ac}(A)\twoheadrightarrow \I(A)\cong A$.
	Denote 
	\begin{align*}
	\bar{m}_n:=\pi\c \f|_{\mu^c_n\ot A^{\ot n}},  \quad
	\bar{h}_n:=\pi\c \f|_{h^c_n\ot A^{\ot n}}, \quad
	\bar{\d}_n:=\pi\c \f|_{\d^c_n\ot A^{\ot n}}, 
	\end{align*}
	and we define the operations  $\{m_n,h_n,\delta_n|n\geq1\}$ as follows
	\begin{align}\label{3-eq-proof of Agd_infty str=sq zero coder}\tag{E3.3.4}
	\begin{split}
	m_n(a_1\otimes\cdots\ot a_n)&~:=~\bar{m}_n(\mu^c_n\ot a_1\cdots\ot a_n),\\
	h_n(a_1\otimes\cdots\ot a_n)&~:=~\bar{h}_n(h^c_n\ot a_1\cdots\ot a_n),\\
	\d_n(a_1\otimes\cdots\ot a_n)&~:=~\bar{\d}_n(\d^c_n\ot a_1\cdots\ot a_n),
	\end{split}
		\end{align}
	for all $a_1,\cdots, a_n\in A$. Since $|\bar{m}_n|=|\bar{h}_n|=|\bar{\d}_n|=-1$, $|\mu^c_n|=n-1$, $|h^c_n|=|\d^c_n|=n$, we have $|m_n|=n-2$, $|h_n|=|\d_n|=n-1$.
	
	Consider the restriction of $\pi\c \f\c \f$ on $\As^{\ac}(n)\ot A^{\ot n}$ for each $n\geq1$, and by $\varphi^2=0$, we have 
	\begin{align*}
		0&=\pi\c \f\c \f([\mu^c_n| a_1 ,\cdots, a_n])\\
		&=\sum_{p+q+r=n}(-1)^{r(q-1)+\sigma}\pi\c \f\big([\mu^c_{p+r+1}| a_1\cdots, a_p,
		\bar{m}_q([\mu^c_{q}| a_{p+1},\cdots, a_{p+q}]),\cdots, a_{n}]
		\big)\\
		&=\sum_{p+q+r=n}(-1)^{r(q-1)+\sigma}\bar{m}_{p+r+1}\big([\mu^c_{p+r+1}| a_1\cdots, a_p,
		\bar{m}_q([\mu^c_{q}| a_{p+1},\cdots, a_{p+q}]),\cdots, a_{n}]\big)\\
		&=\sum_{p+q+r=n}(-1)^{r(q-1)+\sigma}m_{p+r+1}\big(a_1\cdots\ot a_p\otimes
		m_q( a_{p+1}\cdots\ot a_{p+q})\ot\cdots\ot a_{n}\big)\\
		&=\sum_{p+q+r=n}(-1)^{p+qr}(m_{p+r+1}\ucr{p+1}m_{q})(a_1\ot\cdots\ot a_n),
	\end{align*}
	where $\sigma=(|a_1|+|a_2|\cdots+|a_p|)q+p+r$. Therefore, we get  relations \eqref{3-def of homotopy m_n}.

	Next, we show that the relations \eqref{3-def of homotopy h_n} hold. In fact, by $\pi\c \f\c \f([h^c_n|a_1,\cdots, a_n])=0$, we have
	\begin{align*}
		0&=\pi\c \f\c \f([h^c_n|a_1,\cdots, a_n])\\
		&=\sum_{p+q+r=n}(-1)^{r(q-1)+\xi}\pi\c \f\big([\mu^c_{p+r+1}| a_1\cdots, a_p,
		\bar{h}_q([h^c_{q}| a_{p+1}\cdots, a_{p+q}]),\cdots, a_{n}]
		\big)\\
		&+\sum_{p+q+r=n}(-1)^{(r+1)(q-1)+\xi'}\pi\c \f\big([h^c_{p+r+1}| a_1,\cdots,a_p,
		\bar{m}_q([\mu^c_{q}| a_{p+1},\cdots,a_{p+q}]),\cdots, a_{n}]
		\big)\\
		=&\sum_{p+q+r=n}(-1)^{r(q-1)+\xi}\bar{m}_{p+r+1}\big([\mu^c_{p+r+1}| a_1\cdots, a_p,
		\bar{h}_q([h^c_{q}| a_{p+1}\cdots, a_{p+q}]),\cdots, a_{n}]\big)\\
		&+\sum_{p+q+r=n}(-1)^{(r+1)(q-1)+\xi'}\bar{h}_{p+r+1}([h^c_{p+r+1}| a_1,\cdots,a_p,
		\bar{m}_q([\mu^c_{q}| a_{p+1},\cdots,a_{p+q}]),\cdots, a_{n}]
		\big)\\
		=&\sum_{p+q+r=n}(-1)^{r(q-1)+\xi}m_{p+r+1}\big(a_1\cdots\ot a_p\otimes
		h_q(a_{p+1}\cdots\ot a_{p+q})\ot\cdots\ot a_{n}\big)\\
		&+\sum_{p+q+r=n}(-1)^{(r+1)(q-1)+\xi'}h_{p+r+1}(a_1\cdots\ot a_p\otimes
		m_q( a_{p+1}\cdots\ot a_{p+q})\ot\cdots\ot a_{n}\big)\\
		=&\sum_{p+q+r=n}(-1)^{p+qr}(m_{p+r+1}\ucr {p+1}h_{q})(a_1\ot\cdots\ot a_n)\\
		&+\sum_{p+q+r=n}(-1)^{p+q(r+1)}(h_{p+r+1}\ucr {p+1}m_{q})(a_1\ot\cdots\ot a_n),	
	\end{align*}
	where $\xi=(|a_1|+|a_2|\cdots+|a_p|)(q+1)+p+r$, and  $\xi'=(|a_1|+|a_2|\cdots+|a_p|)q+p+r+1$. 
	
	Finally, we check the relations \eqref{3-def of homotopy delta_n}. By $\pi\c \f\c \f(\d^c_n\ot a_1\cdots\ot a_n)=0$, we have
	\begin{align*}
		0=&\pi\c \f\c \f([\d^c_n| a_1,\cdots, a_n])\\
		=&\sum_{p+q+r=n\atop p\geq1}(-1)^{r(q-1)+\xi}\bar{m}_{p+r+1}\big([\mu^c_{p+r+1}| a_1\cdots, a_p,
		\bar{h}_q([h^c_{q}| a_{p+1},\cdots, a_{p+q}]),\cdots, a_{n}]\big)\\
		&+\sum_{q+r=n}(-1)^{r(q-1)+r}\bar{m}_{r+1}\big([\mu^c_{r+1}|
		\bar{\d}_q([\d^c_{q}| a_{1},\cdots, a_{p}]),\cdots, a_{n}]
		\big)\\
		&+\sum_{p+q+r=n}(-1)^{(r+1)(q-1)+\xi'}\bar{\d}_{p+r+1}\big([\d^c_{p+r+1}| a_1,\cdots, a_p,\bar{m}_q([\mu^c_{q}, a_{p+1},\cdots, a_{p+q}]),\cdots, a_{n}]
		\big)\\
		=&\sum_{p+q+r=n\atop p\geq1}(-1)^{p+qr}~(m_{p+r+1}\ucr {p+1}h_{q})(a_1\ot\cdots\ot a_n)
		+ \sum_{q+r=n}(-1)^{qr}(m_{r+1}\ucr 1\d_q)(a_1\ot\cdots\ot a_n)\\                                      
		&+\sum_{p+q+r=n}(-1)^{p+q(r+1)}~(\d_{p+r+1}\ucr {p+1}m_{q})(a_1\ot\cdots\ot a_n),	
	\end{align*}
	where $\xi=(|a_1|+|a_2|\cdots+|a_p|)(q+1)+p+r$, and  $\xi'=(|a_1|+|a_2|\cdots+|a_p|)q+p+r+1$. Thus, we get the relations \eqref{3-def of homotopy delta_n}.
	
Conversely, let $(A, m_n, h_n, \delta_n)$ be an $\Agd_{\infty}$-algebra.
Consider the operations $\{\bar{m}_n,\bar{h}_n,\bar{\delta}_n|n\geq1\}$ as in \eqref{3-eq-proof of Agd_infty str=sq zero coder}, 
and by Proposition \ref{3-prop-def of coderivation on cofree coalgebra} one can obtain a coderivation $\f$ of $\As^{\ac}(A)$.
From the same calculation to check the relations \eqref{3-def of homotopy m_n}-\eqref{3-def of homotopy delta_n},
it follows that $\pi\circ \varphi\circ \varphi=0$ and therefore $\varphi \circ \varphi=0$.
\end{proof}

\begin{rmk}
For $n=1$,  the relations \eqref{3-def of homotopy m_n}-\eqref{3-def of homotopy delta_n} mean that 
\[m_1\c m_1=0,\quad m_1\c h_1=h_1\c m_1,\quad m_1\c \d_1=\d_1\c m_1. \]
Therefore, $m_1$ is a degree $-1$ differential of $A$ and $h_1,\d_1$ commutes with $m_1$. For $n=2$, we have
\begin{align*}
	&m_1\ucr 1 m_2-m_2\ucr 1m_1-m_2\ucr 2m_1=0,\\
	&h_1\ucr 1m_2-m_2\ucr 1h_1-m_2\ucr 2h_1=-m_1\ucr 1h_2-h_2\ucr 1m_1-h_2\ucr 2 m_1,\\
	&\d_1\ucr 1m_2-m_2\ucr 1\d_1-m_2\ucr 2h_1=-m_1\ucr 1\d_2-\d_2\ucr 1m_1-\d_2\ucr 2 m_1.	
\end{align*}
Therefore, $m_1$ is a derivation of $m_2$, $h_1$ is a derivation of $m_2$ up to homotopy, and $\d_1$ is a generalized derivation of $m_2$ up to homotopy.
\end{rmk}

\section{Cohomology and Formal Deformation of AsGDer Triples}

In this section,  we construct the cohomology theory for AsGDer triples by using the cooperad $\Agd^{\ac}$,
and show the cohomology controls the formal deformation of AsGDer triples.

\subsection{Coderivation space on the cofree $\Agd^{\ac}$-coalgebra}

Let $V$ be a graded vector space concentrated in degree $0$ and $\Agd^{\ac}(V)$ the cofree $\Agd^{\ac}$-coalgebra. 
By Proposition \ref{3-prop-def of Koszul dual cooperad }, we know that 
the degree $n$ component of $\Agd^{\ac}(V)$ is 
\begin{align}\label{4-eq-the degree n component of Agd^ac(V)}\tag{E4.1.1}
\Agd^{\ac}(V)^{n}=(\Bbbk\mu^c_{n+1}\otimes V^{\otimes, n+1}) \oplus 
(\Bbbk h^c_n\otimes V^{\otimes n}) \oplus (\Bbbk\d^c_n\otimes V^{\otimes n}),
\end{align}
for $n\geq 1$ and  $\Agd^{\ac}(V)^{0}=\Bbbk\mu^c_1\otimes V$. 
Denote by $\Coder^{n}(\Agd^{\ac}(V))$ the vector space of all degree $n$ coderivations of $\Agd^{\ac}(V)$.
Then we have $\Coder^{n}(\Agd^{\ac}(V))=0$ for $n\ge 1$ since elements of $\Agd^{\ac}(V)$ have non-negative degrees.
It is well known that
\[\Coder(\Agd^{\ac}(V))\colon=\bigoplus_{ n\in \Z}\Coder^{n}(\Agd^{\ac}(V))\]
is a graded Lie algebra with Lie bracket $[f,g]=f\circ g-(-1)^{mn}g\circ f$ for $f\in\Coder^{n}(\Agd^{\ac}(V))$ and  $g\in\Coder^{m}(\Agd^{\ac}(V))$. Denote the graded vector space $L_{\Agd}(V)$ by
\begin{align*}
	L_{\Agd}^{-n}(V):=\begin{cases}
		\Hom(V^{\ot, n+1},V)\oplus \Hom(V^{\ot n},V)\oplus\Hom(V^{\ot n},V), &{\rm if} ~n\geq 1,\\
		\Hom(V,V), &{\rm if} ~n=0,\\
		0,& {\rm if} ~n<0.
	\end{cases}
\end{align*}
By Proposition \ref{3-prop-def of coderivation on cofree coalgebra} and the Equation \eqref{4-eq-the degree n component of Agd^ac(V)}, 
there are isomorphisms  of vector spaces:
\[L_{\Agd}^{-n}(V)\xrightarrow{\Phi_{-n}} \Hom(\Agd^{\ac}(V)^{n}, V)\xrightarrow{\Psi_{-n}} \Coder^{-n}(\Agd^{\ac}(V)),\]
where
\begin{align*}
	\Phi_{-n}(f)(\mu^c_{n+1}\otimes v_1\cdots\otimes v_{n+1})&=f_1(v_1\cdots\otimes v_{n+1}),\\
	\Phi_{-n}(f)(h^c_{n}\otimes v_1\cdots\otimes v_{n})&=f_2(v_1\cdots\otimes v_{n}),\\
	\Phi_{-n}(f)(\delta^c_{n}\otimes v_1\cdots\otimes v_{n})&=f_3(v_1\cdots\otimes v_{n}),
\end{align*}
for $f=(f_1,f_2,f_3)\in L_{\Agd}^{-n}(V)$, $\Phi_{-n}(f)\in  \Hom(\Agd^{\ac}(V)^{(n)}, V)$,
and $\Psi_{-n}$ is defined as in Proposition \ref{3-prop-def of coderivation on cofree coalgebra}. 
Consider $\Phi\colon=\bigoplus_{n}\Phi_n$ and $\Psi\colon=\bigoplus_{n}\Psi_n$, and we have the following isomorphism of graded vector spaces
\[\Psi\circ \Phi\colon L_{\Agd}(V)\to \Coder(\Agd^{\ac}(V)).\]
It follows that $L_{\Agd}(V)$ admits a graded Lie algebra structure with the Lie bracket $[-,-]_{L_{\Agd}(V)}$ on $L_{\Agd}(V)$ given by
$[(f_1, f_2, f_3), (g_1, g_2, g_3)]_{L_{\Agd}(V)}=(\psi_1, \psi_2, \psi_3)$ with
\begin{equation}\label{4-eq-the gr Lie bracket for L_Agd(A)}\tag{E4.1.2}
	\begin{aligned}
		\psi_1=&~\sum_{j=1}^{n+1}(-1)^{m(j-1)}f_1\ucr jg_1 -(-1)^{mn}\sum_{j=1}^{m+1}(-1)^{n(j-1)}g_1\ucr jf_1,\\
		\psi_2=&~(-1)^n\sum_{j=1}^{n+1}(-1)^{(m-1)(j-1)}f_1\ucr jg_2 + \sum_{j=1}^{n}(-1)^{(j-1)m}f_2\ucr jg_1 \\ 
		&~-(-1)^{mn}\Big((-1)^m\sum_{j=1}^{m+1}(-1)^{(n-1)(j-1)}g_1\ucr jf_2+\sum_{j=1}^{m}(-1)^{(j-1)n}g_2\ucr jf_1\Big),
		\\                   
		\psi_3=&~\sum_{j=1}^{n}(-1)^{(j-1)m}f_3\ucr jg_1+(-1)^n\big(f_1\ucr1g_3+\sum_{j=2}^{n+1}(-1)^{(m-1)(j-1)}f_1\ucr jg_2\big)\\
		&~-(-1)^{mn}\Big(\sum_{j=1}^{m}(-1)^{(j-1)n}g_3\ucr jf_1+ (-1)^m \big(g_1\ucr1f_3+\sum_{j=2}^{m+1}(-1)^{(n-1)(j-1)}g_1\ucr jf_2\big)\Big),                     	
	\end{aligned}
\end{equation}
for $(f_1, f_2, f_3)\in L_{\Agd}^{-n}(V)$ and $(g_1, g_2, g_3)\in L_{\Agd}^{-m}(V)$, 
where $-\ucr i-$ is defined  in Equation \eqref{1-1-def-partial composition of map}.

\begin{lem}
	Let $A$ be a vector space, and $\mu\colon A^{\ot 2}\ra A$, $h,\delta\colon A\ra A$ be linear maps. The pair $(A,\mu)$
	is an associative algebra and  the triple $(A,h,\delta)$ is an AsGDer triple
	 if and only if $(\mu,h,\delta)\in L^1_{\Agd}(A)$ satisfies $[(\mu,h,\delta),(\mu,h,\delta)]_{L_{\Agd}(A)}=0$.
\end{lem}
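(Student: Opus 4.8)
The plan is to read the statement as a direct specialization of the graded Lie bracket formula \eqref{4-eq-the gr Lie bracket for L_Agd(A)}. First I would note that $(\mu,h,\delta)$ lives in the degree $-1$ part $L_{\Agd}^{-1}(A)$, since $\mu\in\Hom(A^{\ot 2},A)$ and $h,\delta\in\Hom(A,A)$; accordingly the relevant instance of the bracket is the case $n=m=1$, in which $(-1)^{mn}=-1$. I would then substitute $f=g=(\mu,h,\delta)$ into the three components $\psi_1,\psi_2,\psi_3$ of $[(\mu,h,\delta),(\mu,h,\delta)]_{L_{\Agd}(A)}$.

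The second step is to carry out this substitution. Because $f=g$ and $(-1)^{mn}=-1$ in degree $-1$, the ``$f\circ g$'' half and the ``$-(-1)^{mn}g\circ f$'' half of each $\psi_i$ coincide rather than cancel, so each component is twice a single expression:
\begin{align*}
\psi_1&=2\,(\mu\ucr 1\mu-\mu\ucr 2\mu),\\
\psi_2&=2\,(h\ucr 1\mu-\mu\ucr 1 h-\mu\ucr 2 h),\\
\psi_3&=2\,(\delta\ucr 1\mu-\mu\ucr 1\delta-\mu\ucr 2 h).
\end{align*}
These are exactly twice the three generators of the relation space in \eqref{2-eq-def of relation on operad Ad}, now interpreted as operations on $A$ via the morphism $\Agd\to\End_A$.

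The third step is to unwind each $\psi_i$ through the partial-composition formula \eqref{1-1-def-partial composition of map}. One finds that $\psi_1(a,b,c)=2\,(\mu(\mu(a,b),c)-\mu(a,\mu(b,c)))$ recovers associativity of $\mu$, that $\psi_2(a,b)=2\,(h(\mu(a,b))-\mu(h(a),b)-\mu(a,h(b)))$ recovers the Leibniz identity making $h$ a derivation, and that $\psi_3(a,b)=2\,(\delta(\mu(a,b))-\mu(\delta(a),b)-\mu(a,h(b)))$ recovers the identity making $\delta$ a generalized $h$-derivation. Since $\psi_1\in\Hom(A^{\ot 3},A)$ while $\psi_2,\psi_3\in\Hom(A^{\ot 2},A)$ lie in independent summands of $L_{\Agd}^{-2}(A)$, the bracket vanishes if and only if all three $\psi_i$ vanish; and because $\operatorname{char}\Bbbk=0$ the common factor $2$ may be dropped. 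Hence $[(\mu,h,\delta),(\mu,h,\delta)]_{L_{\Agd}(A)}=0$ is equivalent to $\mu$ being associative, $h$ a derivation, and $\delta$ a generalized $h$-derivation, which is precisely the assertion; both directions come out at once from this single computation.

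The only genuine obstacle is the sign bookkeeping inside \eqref{4-eq-the gr Lie bracket for L_Agd(A)}: one must verify that in degree $-1$ the global sign $(-1)^{mn}=-1$ forces the two halves of each $\psi_i$ to add, and that the internal signs $(-1)^{m(j-1)}$, $(-1)^{(m-1)(j-1)}$, $(-1)^{(j-1)m}$ and their analogues collapse correctly once $n=m=1$ is imposed. Everything else is a mechanical evaluation, and, given that the explicit bracket formula has already been established, no deeper structural input (such as the Koszulity of $\Agd$) is required for this lemma.
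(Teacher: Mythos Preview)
Your proposal is correct and follows exactly the paper's approach: plug $(\mu,h,\delta)$ into the bracket formula \eqref{4-eq-the gr Lie bracket for L_Agd(A)} with $n=m=1$ and read off the three relations. Your version is in fact more careful than the paper's own proof, tracking the factor of $2$ and the characteristic-zero hypothesis, and your third component $\psi_3=2(\delta\ucr 1\mu-\mu\ucr 1\delta-\mu\ucr 2 h)$ is correct where the paper's displayed third equation contains a typo (it writes $\mu\ucr 2\delta$ instead of $\mu\ucr 2 h$).
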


\begin{proof}
	By the definition of $[-,-]_{L_{\Agd}(A)}$ in Equation \eqref{4-eq-the gr Lie bracket for L_Agd(A)},  $[(\mu,h,\delta),(\mu,h,\delta)]_{L_{\Agd}(A)}=0$ if and only if $\mu, h$ and $\delta$ satisfies:
	\begin{align*}
		&\mu\ucr 1\mu=\mu\ucr 2\mu,\\
		&\mu\ucr 1h+\mu\ucr 2h-h\ucr 1\mu=0,\\
		&\mu\ucr 1\delta+\mu\ucr 2h-\delta\ucr 1\mu=0.
	\end{align*}
	it follows that $[(\mu,h,\delta),(\mu,h,\delta)]_{L_{\Agd}(A)}=0$ if and only if $\mu$ is an associative product, $h$ is a derivation and $\delta$ is a generalized $h$-derivation.	
\end{proof}

\subsection{The AsGDer complex}
Let $(A,\mu)$ be an associative algebra and $(A,h,\delta)$ be an AsGDer triple.
Then $(\mu,h,\delta)\in L^{-1}_{\Agd}(A)$ satisfies $[(\mu,h,\delta),(\mu,h,\delta)]_{L_{\Agd}(V)}=0$.  
Denote the spaces
\begin{itemize}
	\item $\C^n_{\gder}(A, A)\colon =0$ for $n\le 0$,
	\item ${\C}^1_{\gder}(A, A)\colon=L^0_{\Agd}(A)=\Hom(A,A)$,
	\item $\C^n_{\gder}(A, A)\colon=L^{1-n}_{\Agd}(A)=\Hom(A^{\otimes n},A)\oplus\Hom(A^{\otimes, n-1},A)\oplus\Hom(A^{\otimes, n-1},A)$ for $n>1$,
\end{itemize} 
and the linear map 
\begin{align*}
\partial\colon C^n_{\gder}(A,A)&\to C^{n+1}_{\gder}(A,A),\\
\partial(f)\colon&=(-1)^{n-1}[(\mu,h,\delta),~f]_{L_{\Agd}(A)}.
\end{align*}
By $[(\mu,h,\delta),(\mu,h,\delta)]_{L_{\Agd}(A)}=0$, we have  
\begin{align*}
	\partial^2(f)&=[(\mu,h,\delta),[(\mu,h,\delta),f]_{L_{\Agd}(A)}]_{L_{\Agd}(A)}\\
	&=[[(\mu,h,\delta),(\mu,h,\delta)]_{L_{\Agd}(A)},f]_{L_{\Agd}(A)}-[(\mu,h,\delta),[(\mu,h,\delta),f]_{L_{\Agd}(A)}]_{L_{\Agd}(A)}\\
	&=-[(\mu,h,\delta),[(\mu,h,\delta),f]_{L_{\Agd}(A)}]_{L_{\Agd}(A)}.
\end{align*}
It follows that $\partial^2=0$ and $\partial$ is a differential. Therefore, we obtain a complex $(\C_{\gder}^\bullet(A, A), \partial^\bullet)$.

\begin{Def}
Retain the above notation. Then the complex $(\C_{\gder}^\bullet(A, A), \partial^\bullet)$
is called the \textit{AsGDer complex} for the AsGDer triple $(A, h, \delta)$, and its $n$-th cohomology group
is called the \textit{$n$-th AsGDer cohomology group} of $(A, h, \delta)$, denoted by $H^n_{\gder}(A, A)$, for $n \geq 1$.
\end{Def}

\begin{rmk}
For $f\in \C^1_{\gder}(A,A)=\Hom(A,A)$,  
\begin{align}\label{4-def of coboundary map of degree 1}\tag{E4.2.1}
	\partial(f)=&\big(\mu\ucr 1f+\mu\ucr 2f-f\ucr 1\mu,~h\ucr 1f-f\ucr 1h,~\delta\ucr 1 f-f\ucr 1\delta\big).
\end{align}
For $f=(f_1,f_2,f_3)\in C^n_{\gder}(A,A)$ ($n\ge 2)$,  
\begin{equation}\label{4-def of coboundary map n}\tag{E4.2.2}
	\begin{aligned}
		\partial(f)=\Big(&\mu\ucr 2f_1+\sum_{i=1}^{n}(-1)^if_1\ucr i\mu+(-1)^{n+1}\mu\ucr 1f_1,\\
		&\mu\ucr{2}f_2+(-1)^n\mu\ucr{1}f_2+\sum_{i=1}^{n-1}(-1)^i f_2\ucr{i}\mu+(-1)^{n-1}\big(h\ucr{1}f_1-\sum_{i=1}^{n}f_1\ucr{i}h\big),\\  
		&\mu\ucr{2}f_2+(-1)^n\mu\ucr{1}f_3+\sum_{i=1}^{n-1}(-1)^i f_3\ucr{i}\mu+(-1)^{n-1}\big(\delta\ucr{1}f_1-f_1\ucr{1}\delta-\sum_{i=2}^{n}f_1\ucr{i}h\big)\Big).\\
	\end{aligned}	
\end{equation}
Denote by ${\rm Der}(A)$ the space of all derivations of the algebra $A$, from \eqref{4-def of coboundary map of degree 1}, we have $H^1_{\gder}(A,A)=\{f\in {\rm Der}(A) ~|~f\c h=h\c f~ {\rm and}~ f\c \delta=\delta\c f \}$.

\end{rmk}

Next, we consider the cohomology of an AsGDer triple with coefficients in it's module.
\begin{Def}
	Let $(A,h,\delta)$ be an AsGDer triple.
	An AsGDer-module over $(A, h, \delta)$ consists of a triple $(M,h_M,\d_M)$ in which $M$ is an $A$-$A$-bimodule and $h_M,\d_M : M \rightarrow M$ are  linear maps satisfying
	\begin{align}\label{4-def of module of AsGDer}\tag{E4.2.3}
		\begin{split}
			h_M ( a m ) &=h (a) m + a h_M (m),\\
			h_M (m a )& = h_M (m) a + m h (a),\\
			\delta_M ( a m ) &=\delta (a) m + a h_M (m),\\
			\delta_M ( ma ) &=\delta_M (m) a + m h (a),
		\end{split}	
	\end{align}
	for all	$a\in A$ and $m\in M$.	
\end{Def} 	

It is easily seen that the $(A,h,\delta)$ is a module over itself.  
Suppose that $M$ is an $A$-$A$-bimodule over an associative algebra $A$.
Recall that the semi-direct product $A\ltimes M$ is an
associative algebra with $A\ltimes M=A\oplus M$ as a vector space and the multiplication given by 
\begin{align*}
	(a, m) \cdot (b, n) = (ab, an + mb).
\end{align*}
On AsGDer triples, we have a similar result.

\begin{lem}\label{4-lem-semi-direct}
Let $(A, h, \delta)$ be an AsGDer triple  and $(M, h_M,\d_M)$ be an AsGDer-module over $(A, h, \delta)$. 
Then $\d \oplus \delta_M\colon A\ltimes M \to A\ltimes M$ is a generalized $h\oplus h_M$-derivation.
\end{lem}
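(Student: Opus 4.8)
The plan is to verify directly that $\delta\oplus\delta_M$ satisfies the defining identity of a generalized $(h\oplus h_M)$-derivation on $A\ltimes M$. Since the notion of generalized derivation presupposes an ambient derivation, I would first record that $h\oplus h_M$ is indeed a derivation of $A\ltimes M$: writing a typical product as $(a,m)\cdot(b,n)=(ab,\,an+mb)$ and applying $h\oplus h_M$ componentwise, the $A$-component is handled by $h$ being a derivation of $A$, while the $M$-component is exactly the first two module axioms in \eqref{4-def of module of AsGDer}, namely $h_M(an)=h(a)n+ah_M(n)$ and $h_M(mb)=h_M(m)b+mh(b)$.

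The main step is then the generalized-derivation identity
\[(\delta\oplus\delta_M)\big((a,m)\cdot(b,n)\big)=(\delta\oplus\delta_M)(a,m)\cdot(b,n)+(a,m)\cdot(h\oplus h_M)(b,n),\]
which I would check one component at a time. The $A$-component asks precisely that $\delta(ab)=\delta(a)b+ah(b)$, which holds because $\delta$ is a generalized $h$-derivation of $A$. For the $M$-component, expanding the left-hand side by linearity gives $\delta_M(an)+\delta_M(mb)$; applying the third module axiom $\delta_M(an)=\delta(a)n+ah_M(n)$ and the fourth $\delta_M(mb)=\delta_M(m)b+mh(b)$ produces exactly the four terms $\delta(a)n+\delta_M(m)b+ah_M(n)+mh(b)$ that appear on the right-hand side after expanding $(\delta\oplus\delta_M)(a,m)\cdot(b,n)$ and $(a,m)\cdot(h\oplus h_M)(b,n)$.

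I expect no serious obstacle: the computation is routine bookkeeping. The one point deserving attention is the asymmetry built into the module axioms in \eqref{4-def of module of AsGDer} — the term $ah_M(m)$ in $\delta_M(am)$ uses $h_M$ rather than $\delta_M$, and $mh(a)$ in $\delta_M(ma)$ uses $h$ rather than $\delta$. This asymmetry is exactly what makes the $M$-component close up: it mirrors the defining identity $\delta(ab)=\delta(a)b+ah(b)$, in which $\delta$ acts on the left factor and $h$ on the right. Verifying that these terms match the right factor $(a,m)\cdot(h\oplus h_M)(b,n)$, where it is $h\oplus h_M$ rather than $\delta\oplus\delta_M$ that appears, is the only place where one must be careful not to confuse the two maps.
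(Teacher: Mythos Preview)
Your proposal is correct and follows essentially the same approach as the paper: a direct componentwise verification of the generalized-derivation identity using the module axioms \eqref{4-def of module of AsGDer}. The only cosmetic difference is that the paper invokes \cite[Proposition~3]{DM} for the fact that $h\oplus h_M$ is a derivation, whereas you sketch that verification directly.
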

\begin{proof}
By \cite[Proposition 3]{DM}, we know that $h  \oplus h_M$ is a derivation.
For any $(a, m), (b, n)\in A\ltimes M$, we have
	\begin{align*}
		&(\d  \oplus \d_M) ( (a, m) \cdot (b, n) ) \\=~& ( \delta (ab) , \d_M (an) + \d_M (mb)  ) \\
		=~& (\d (a) b + a h (b), \d (a)n + a h_M (n) + \d_M (m) b + m h (b) ) \\
		=~& ( \d (a) b , \d (a) n + \d_M (m) b) ~+~ ( a h(b) , a h_M (n) + m h (b) ) \\
		=~& (\d (a), \d_M (m)) \cdot (b, n) ~+~ (a, m) \cdot (h(b) , h_M (n) ) \\
		=~& (\d \oplus \d_M )(a, m) \cdot (b, n) ~+~ (a, m) \cdot (h \oplus h_M) (b, n).
	\end{align*}
	It follows that $\d  \oplus \d_M \colon  A \oplus M \rightarrow A \oplus M$ is a generalized $h\oplus h_M$-derivation. 
\end{proof}

Let $(M, h_M,\d_M)$ be an AsGDer-module over an  AsGDer triple $(A, h, \delta)$. 
By Lemma \ref{4-lem-semi-direct}, $(A\ltimes M, h \oplus h_M,\d \oplus \delta_M)$ is also an AsGDer triple, called the semi-direct of
$(A, h, \delta)$ and $(M, h_M,\d_M)$.
Therefore, we get a cochain complex $(C_{\gder}^\bullet(A\ltimes M, A\ltimes M),\partial_{A\ltimes M})$.
Define 
\[\C^n_{\gder}(A,M)\colon=\begin{cases}
0, & n\le 0,\\
\Hom(A, M), & n=1,\\
\Hom(A^{\otimes n},M)\oplus\Hom(A^{\otimes, n-1},M)\oplus\Hom(A^{\otimes, n-1},M), & n\ge 2.
\end{cases}\]
Observe that $\C^\bullet_{\gder}(A,M)$ is a subspace of $ C^\bullet_{\gder}(A\ltimes M, A\ltimes M)$.
It is easy to verify that 
\[\partial_{A\ltimes M}(\C^\bullet_{\gder}(A, M))\subseteq \C^{\bullet+1}_{\gder}(A, M).\]
Consequently, we get a cochain complex $(\C^\bullet_{\gder}(A,M),\partial)$, 
where $\partial$ is the restriction of the linear map $\partial_{A\ltimes M}$ on $\C^\bullet_{\gder}(A, M)$.

\begin{Def}
Let $(A, h, \delta)$ be an  AsGDer triple and $(M, h_M,\d_M)$ an AsGDer-module over  $(A, h, \delta)$. 
The cochain complex $(\C^\bullet_{\gder}(A,M),\partial)$ is called the \textit{AsGDer complex for $(A, h, \delta)$ with coefficients in the AsGDer-module $(M, h_M,\d_M)$}, and 
the $n$-th cohomology group of $(\C^\bullet_{\gder}(A,M),\partial)$  is called
the \textit{$n$-th AsGDer cohomology group of an AsGDer triple $(A,h,\delta)$ with coefficients in the AsGDer-module $(M, h_M,\d_M)$},
denoted by $H^n_{\gder}(A, M)$, for $n \geq 1$.
\end{Def}

\subsection{Connection with AssDer cohomology groups}
Let $A$ be an associative algebra and $M$ be an $A$-$A$-bimodule. 
Recall that the Hochschild cochain complex of $A$ with coefficients in $M$ is $(\C_{\h}^\bullet(A, M), d_{\rm Hoch})$ with 
$$\C^n_{\h}(A, M)=
\begin{cases}
0, & n=0,\\
\Hom(A^{\ot n}, M), & n\ge 1,
\end{cases}$$ and the differential $d_{\rm Hoch}\colon \C_{\h}^n(A, M)\to \C_{\h}^{n+1}(A, M)$ $(n\ge 1)$ given by 
\begin{equation*}
	\begin{aligned}\label{4-def-hoch differential}
		d_{\rm Hoch}(f)(a_1,a_2,\cdots,a_{n+1}) =& a_1f(a_2,\cdots,a_{n+1}) 
		+\sum_{i=1}^{n}(-1)^{i}f(a_1,\cdots,a_{i-1},a_ia_{i+1}, a_{i+2}, \cdots,a_{n+1})\\
		&+ (-1)^{n+1}f(a_1,\cdots,a_n)a_{n+1}.
	\end{aligned}
\end{equation*}
for all $f\in\C_{\h}^{n}(A,M)$, $a_1,...,a_{n+1}\in A$ and $n\ge 1$. The corresponding cohomology groups are denoted by $ H^n_{\rm Hoch}(A, M)$.

Note that the cochain complex $(\C_{\h}^\bullet(A, M), d_{\rm Hoch})$ is different with the classical Hochschild complex 
at the term of degree $0$, since $\C_{\h}^0(A, M)=0$ rather than $M$. So the first Hochschild cohomology group $H_{\h}^1(A, M)=\Der(A, M)$, 
rather than the space of outer derivations of $A$ with values in $M$.

Recall that an associative algebra $A$ together with a derivation $h\colon A \to A$ is called an \textit{AssDer pair}, denoted by $(A, h)$, 
and a pair $(M, h_M)$ is called a module over $(A, h)$ if $M$ is an $A$-$A$-bimodule and the linear map $h_M\colon M\to M$ satisfies 
\[h_M(am)=h(a)m+ah_M(m)\quad {\rm and} \quad h_M(ma)=h_M(m)a+mh(a) \]
for all $a\in A, m\in M$, see \cite{DM} for more details. 
In the same paper, the authors constructed the cohomology of the AssDer pairs. 
	
\begin{Def}[{\cite[Section 2.1]{DM}}]
Let $(A, h)$ be an AssDer pair and $(M, h_M)$ a module over $(A, h)$.
Define
\[\C^n_{{\rm AssDer}}(A, M)\colon=\begin{cases}
	0, & n\le 0,\\
	\Hom(A, M), & n=1,\\
	\Hom(A^{\otimes n}, M)\oplus\Hom(A^{\otimes, n-1}, M), & n\ge 2.
\end{cases}\]
and the differential $\partial' \colon \C^n_{\mathrm{AssDer}} (A, M) \rightarrow \C^{n+1}_{\mathrm{AssDer}} (A, M) $ 
\begin{align*}
	\begin{cases}
		\partial'(f)= (d_{\mathrm{Hoch}}(f), -\Delta(f) ), & \text{ for } f \in \C^1_{\mathrm{AssDer}} (A, M) = \mathrm{Hom} (A, M),\\
		\partial' (f_n, \overline{f}_n) = ( d_{\mathrm{Hoch}}(f_n),  d_{\mathrm{Hoch}}(\overline{f}_n)+(-1)^n  \Delta(f_n)), & \text{ for } (f_n, \overline{f}_n) \in \C^n_{\mathrm{AssDer}} (A, M), 
	\end{cases}
\end{align*}
where the linear map $\Delta\colon \mathrm{Hom}(A^{\otimes n}, M) \rightarrow \mathrm{Hom}(A^{\otimes n}, M) $ is given by
$$\Delta f = \sum_{i=1}^n f \circ (\mathrm{id}^{\otimes i-1} \otimes  h \otimes \mathrm{id}^{\otimes n-i}) - h_M \circ f.$$
The cochain complex $(\C^\bullet_{\mathrm{AssDer}} (A, M), \partial')$ is the  \textit{AssDer complex of $(A, h)$ with coefficients in its module $(M, h_M)$}, and the corresponding cohomology groups are denoted by $ H^n_{\rm AssDer}(A,M)$, for $n \geq 1$.
\end{Def}
 
 \begin{rmk}
 Observe that $\Delta\colon \C_{\h}^\bullet(A, M) \to \C_{\h}^\bullet(A, M)$ 
 is a chain map and the AssDer complex $(\C^\bullet_{\mathrm{AssDer}} (A, M), \partial')$ is exactly isomorphic to the negative shift of the mapping cone of the chaim map $\Delta$. 
\end{rmk}	
	
Based on the above observation, we have the following result.
\begin{prop}\label{4-prop-relations-cohomology groups-AssDer}
Let  $\C^\bullet_{\mathrm{AssDer}} (A, M)$ be the AssDer complex of the AssDer pair $(A, h)$ with coefficients in the $(A, h)$-module $(M, h_M)$.
Then there is a long exact sequence
\begin{align*}
\begin{split}
0 & \rightarrow H^1_{\mathrm{AssDer}} (A, A)\rightarrow \Der(A, M) \rightarrow \Der(A, M)
\rightarrow  H^2_{\mathrm{AssDer}} (A, A)\rightarrow H^2_{\mathrm{Hoch}} (A, A) \\
&\to \cdots\rightarrow H^{n-1}_{\mathrm{Hoch}} (A, A)\rightarrow H^n_{\mathrm{AssDer}} (A, A)\rightarrow H^n_{\mathrm{Hoch}} (A, A)\rightarrow \cdots.
\end{split}
\end{align*}		
\end{prop}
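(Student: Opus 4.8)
The plan is to build directly on the observation recorded in the preceding remark: the AssDer complex $(\C^*_{\mathrm{AssDer}}(A,M),\partial')$ is the mapping cone of the chain map $\Delta\colon \C^*_{\h}(A,M)\to\C^*_{\h}(A,M)$. Writing $\C^n_{\mathrm{AssDer}}(A,M)=\C^n_{\h}(A,M)\oplus\C^{n-1}_{\h}(A,M)$ and $\partial'(f_n,\bar f_n)=(d_{\mathrm{Hoch}}f_n,\,d_{\mathrm{Hoch}}\bar f_n+(-1)^n\Delta f_n)$, I would first extract from this cone the short exact sequence of cochain complexes
\[
0\longrightarrow \C^{*-1}_{\h}(A,M)\xrightarrow{\ \iota\ } \C^*_{\mathrm{AssDer}}(A,M)\xrightarrow{\ \pi\ } \C^*_{\h}(A,M)\longrightarrow 0,
\]
where $\C^{*-1}_{\h}$ denotes the Hochschild complex with degree-$n$ term $\C^{n-1}_{\h}(A,M)$, the map $\iota(\bar f)=(0,\bar f)$ embeds the second summand, and $\pi(f_n,\bar f_n)=f_n$ projects onto the first. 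That $\iota$ has image a subcomplex follows from $\partial'(0,\bar f_n)=(0,d_{\mathrm{Hoch}}\bar f_n)$, and that $\pi$ is a chain map onto the Hochschild complex is immediate; the only input needed beyond this is that $\Delta$ commutes with $d_{\mathrm{Hoch}}$, which the remark already grants.

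Next I would apply the long exact cohomology sequence of this short exact sequence, using $H^n(\C^{*-1}_{\h})=H^{n-1}_{\h}$ and $H^n(\C^*_{\h})=H^n_{\h}$, to obtain
\[
\cdots\to H^{n-1}_{\h}(A,M)\to H^n_{\mathrm{AssDer}}(A,M)\to H^n_{\h}(A,M)\xrightarrow{\ \vartheta\ } H^n_{\h}(A,M)\to H^{n+1}_{\mathrm{AssDer}}(A,M)\to\cdots.
\]
The remaining task is to identify the connecting map $\vartheta$. I would compute it from the definition: given a Hochschild cocycle $f_n$ with $d_{\mathrm{Hoch}}f_n=0$, lift it to $(f_n,0)\in\C^n_{\mathrm{AssDer}}$ and apply $\partial'$, getting $\partial'(f_n,0)=(0,(-1)^n\Delta f_n)$, which lies in the image of $\iota$ and corresponds to the cocycle $(-1)^n\Delta f_n$. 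Hence $\vartheta=(-1)^n\Delta_*$, i.e. the map induced by $\Delta$ on Hochschild cohomology up to sign; since $\Delta$ is a chain map this is well defined, and the sign does not affect exactness.

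Finally I would pin down the low-degree terms. Because the complexes here are the reduced Hochschild complexes with $\C^0_{\h}=0$, one has $H^0_{\h}=0$, so the sequence begins at $n=1$; combined with $H^1_{\h}(A,M)=\Der(A,M)$ this yields
\[
0\to H^1_{\mathrm{AssDer}}(A,M)\to \Der(A,M)\xrightarrow{\ \Delta_*\ }\Der(A,M)\to H^2_{\mathrm{AssDer}}(A,M)\to H^2_{\h}(A,M)\to\cdots,
\]
which is exactly the asserted sequence (taking $M=A$ recovers the coefficients displayed in the statement). I expect no conceptual obstacle here; the only delicate points are bookkeeping, namely fixing the shift convention so that $0\to\C^{*-1}_{\h}\to\C^*_{\mathrm{AssDer}}\to\C^*_{\h}\to0$ is a genuine short exact sequence of cochain complexes, and tracking the factor $(-1)^n$ so as to confirm that the connecting map is $\Delta_*$ up to an irrelevant sign. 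The verification that $\Delta$ is a chain map, which would otherwise be the one computational step, is already supplied by the remark.
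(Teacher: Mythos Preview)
Your proposal is correct and follows exactly the approach the paper intends: the paper does not give a detailed proof, relying instead on the preceding remark that the AssDer complex is the mapping cone of $\Delta$, and your argument simply spells out the standard long exact sequence of a mapping cone together with the identification of the low-degree terms. The only additional content you supply beyond the paper is the explicit computation of the connecting map as $(-1)^n\Delta_*$, which is a welcome clarification.
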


Now, we consider the relations between AssDer complex and AsGDer complex.
Let $(A, h, \delta)$ be an AsGDer triple amd $(M, h_M, \delta_M)$ a module over $(A, h, \delta)$.
Obviously, $(A, h)$ is an AssDer pair and $(M, h_M)$ is a module over $(A, h)$.	
Suppose that $\C^\bullet_{\gder} (A, M)$ is the AsGDer complex of $(A, h, \delta)$ with coefficients in $(M, h_M, \delta_M)$ 
and $\C^\bullet_{\mathrm{AssDer}} (A, M)$ is the AssDer complex of AssDer pair $(A,h)$ with coefficients in $(M, h_M)$. 
Then we have a chain map 
$\Theta^\bullet\colon \C^\bullet_{\gder} (A, A)\to \C^\bullet_{\mathrm{AssDer}} (A, A)$ defined by:
\[\begin{cases}
	\Theta^1=\Id, & \\
	\Theta^n(f_1, f_2, f_3)= (f_1, f_2), & n\ge 2.
\end{cases}\]
Clearly, $\Theta^\bullet$ is an epimorphism of cochain complexes. Therefore, we have a short exact sequence of cochain complexes:	
\begin{align*}
	\xymatrix{
		0\ar[r]& \ka^\bullet(A,M) \ar[r]
		& \C^\bullet_{\gder} (A, M) \ar[r] ^{\Theta^\bullet\quad} & \C^\bullet_{\mathrm{AssDer}} (A, M)  \ar[r] &0,}
\end{align*}
where the cochain complex $\ka^\bullet(A,M)$ is the kernel of the chain map $\Theta^\bullet$. To be precise, 
\[\ka^n(A,M)=\begin{cases}
	0, & n\le 1,\\
	\Hom(A^{\otimes n-1}, M)  & n\ge 2,
\end{cases}\]
and the differential $\partial_{\ka}^n :\ka^n(A, M)\to \ka^{n+1}(A, M)$ is given by
\begin{equation*}
	\partial^n_{\ka}(f)(a_1, a_2, \cdots, a_{n}) =
	\sum_{i=1}^{n-1}(-1)^{i}f(a_1, \cdots, a_{i-1}, a_ia_{i+1}, a_{i+2}, \cdots, a_{n})
	+ (-1)^{n}f(a_1, \cdots, a_{n-1})a_{n}.
\end{equation*}
for any $f\in\ka^n(A, M)$.

\begin{prop}\label{4.3-prop-rela-cohomology groups-AsGDer-1}
Retain the above notation. Then there exists a long exact sequence
	\begin{align*}
		\begin{split}
	0 & \rightarrow H^1_{\gder} (A, M)\rightarrow H^1_{\mathrm{AssDer}} (A, M)\rightarrow H^2(\ka^\bullet(A, M))
	\rightarrow H^2_{\gder} (A, M) \rightarrow H^2_{\mathrm{AssDer}} (A, M) \\
	&\to \cdots \to H^{n-1}_{\mathrm{AssDer}} (A, M) \rightarrow H^{n}(\ka^\bullet(A,M))\rightarrow H^n_{\gder} (A, M)\rightarrow H^n_{\mathrm{AssDer}} (A, M)\rightarrow \cdots.	
		\end{split}		
	\end{align*}
\end{prop}

Next we give a characterization for the cohomology groups of $\ka^\bullet(A, M)$.
Assume that $A$ is a non-unital associative algebra. 
One can consider the augmented algebra $\Lambda=\Bbbk \1 \oplus A$ with the augmented ideal $A$ and the identity element $\1$. 
Clearly, the category of right $A$-modules is isomorphic to the category
of right unitary modules over $\Lambda$. 
For brevity, we use $[a_1\mid \cdots\mid a_n]$ to denote the homogeneous element
$a_1\otimes \cdots \otimes a_n\in A^{\otimes n}$.

\begin{lem}\label{4.3-lem-cohomology group of K(A, M)}
	Let $\Lambda=\Bbbk \1 \oplus A$ be an augmented algebra with augmented ideal $A$,
	and $M$ be a right $\Lambda$-module. 
	Then we have the isomorphism 
	\[\Ext_\Lambda^n(A, M)\cong H^{n+2}(\ka^\bullet(A, M)),\]
	for all $n\ge 0$.
\end{lem}
\begin{proof}
	Being similar to the normalized  resolution introduced in \cite[Section 10.2]{Ma}, we have the free resolution $\mathbf{P}_\bullet \to A$ for the right $\Lambda$-module $A$,  where 
	\[\mathbf{P}_\bullet \colon \cdots \to A^{\otimes, n+1}\otimes \Lambda \xrightarrow{b_n} A^{\otimes n}\otimes \Lambda
	\to \cdots \to A^{\otimes 3}\otimes \Lambda \xrightarrow{b_2} A^{\otimes 2}\otimes \Lambda \xrightarrow{b_1} A\otimes \Lambda \to 0,\]
	and the differential 
	\[b_{n}([a_1\mid\cdots \mid a_{n+1}]\otimes \alpha)=\sum_{i=1}^{n}(-1)^{i-1} 
	[a_1\mid \cdots \mid a_{i}a_{i+1}\mid \cdots \mid a_{n+1}]\otimes \alpha+(-1)^{n}
	[a_1\mid\cdots \mid a_{n}]\ot (a_{n+1}\al)\]
	for all $[a_1\mid\cdots \mid a_{n+1}]\otimes \alpha\in A^{\ot, n+1}\ot \Lambda$. 
	
	Applying the functor $\Hom_\Lambda(-, M)$ to the complex $\mathbf{P}_\bullet$, 
	we get a complex $\Hom_\Lambda(\mathbf{P}_\bullet, M)$ as follows
	\begin{align*}
		0 & \to \Hom_\Lambda(A\ot \Lambda, M) \xrightarrow{b_1^\ast} \Hom_\Lambda(A^{\ot 2}\ot \Lambda, M)\to \Hom_\Lambda(A^{\ot 3}\ot \Lambda, M)\to\cdots \\
		& \to \Hom_\Lambda(A^{\ot, n}\ot \Lambda, M) \xrightarrow{b_n^\ast}\Hom_\Lambda(A^{\ot, n+1}\ot \Lambda, M) \to \cdots.
	\end{align*}
	By the isomorphisms of vector spaces
	\[\Hom_\Lambda(A^{\ot n}\ot \Lambda, M)\cong \Hom(A^{\ot n}, M),\]
	we know that the complex $\Hom_\Lambda(\mathbf{P}_\bullet, M)$ is isomorphic to the  complex
	\[\mathbf{P}^\bullet(A, M)\colon 0 \to \Hom(A, M) \xrightarrow{d_1} \Hom(A^{\ot2}, M) \to \cdots \to \Hom(A^{\ot n}, M)
	\xrightarrow{d_n} \Hom(A^{\ot, n+1}, M)\to \cdots,\]
	where the differential 
	\begin{align*}
		d^n(f)([a_1\mid\cdots\mid a_{n+1}]) =
		\sum_{i=1}^{n}(-1)^{i-1}f([a_1\mid\cdots\mid a_ia_{i+1}\mid\cdots\mid a_{n+1}])
		+ (-1)^{n}f([a_1\mid\cdots\mid a_{n}])a_{n+1}.
	\end{align*}
	for all $a_1\ot\cdots\ot a_{n+1}\in A^{\ot, n+1}$.
	It is easily seen that there is an isomorphism of complexes
	\[\mathbf{P}^\bullet(A, M)\cong \ka^\bullet(A, M)[-2].\]
	Therefore, we have 
	\[\Ext_\Lambda^n(A, M)\cong H^{n+2}(\ka^\bullet(A, M)),\]
	for all $n\ge 0$.
\end{proof}

If $A$ is a unital associative algebra, we can take $\Lambda=A$ and
obtain the same isomorphism in Proposition \ref{4.3-lem-cohomology group of K(A, M)}.
Therefore, we have the following simple observation because $A$ is a projective right $A$-module.

\begin{lem}\label{4.3-lem-cohomology group of K(A, M) for unital A}
	Let $A$ be a unital associative algebra and $M$ a right $A$-module. Then 
	$H^{2}(\ka^{\bullet}(A,M))\cong M$ 
	and $H^{n}(\ka^{\bullet}(A,M))=0$ for all $n\neq 2$.
\end{lem}  

By the long exact sequence in Proposition \ref{4.3-prop-rela-cohomology groups-AsGDer-1} and Lemma \ref{4.3-lem-cohomology group of K(A, M) for unital A}, 
we have the following result.

\begin{prop}\label{4-prop-cohomology group-unital-version}
	Let $(A, h, \delta)$ be an AsGDer triple, where $A$ is a unital associative algebra, and let $(M, h_M, \delta_M)$ be a module over $(A, h, \delta)$.
	Then we have the exact sequence
	\begin{align*}
		0  \to H^1_{\gder} (A, M)\rightarrow H^1_{\mathrm{AssDer}} (A, M)\to M
		\to H^2_{\gder} (A, M) \rightarrow H^2_{\mathrm{AssDer}} (A, M) \to 0
	\end{align*}
	and the isomorphism 
	\[H_{\gder}^n(A, M)\cong H_{{\rm AssDer}}^n(A, M)\]
	for all $n\ge 3$.
\end{prop}

\begin{cor}\label{4-corollary-case-hoch=0}
	Let $(A, h, \delta)$ be an AsGDer triple, where $A$ is a unital associative algebra, and let $(M, h_M, \delta_M)$ be a module over $(A, h, \delta)$.
	\begin{itemize}
		\item [(i)] If $H^2_{\h}(A,M)=0$, then $\dim H^2_{\gder}(A,M)=\dim H^1_{\gder}(A,M)+\dim M $.
		\item [(ii)] If $H^{n-1}_{\h}(A,M)=H^n_{\h}(A,M)=0$ for some $n\geq 3$, then $H^n_{\gder}(A,M)=0$.
	\end{itemize}
	In particular, if $A$ is semisimple as an associative algebra, then $H^n_{\gder}(A, M)=0$ for all $n\geq 3$.
\end{cor}
\begin{proof}
	(i) By Proposition \ref{4-prop-relations-cohomology groups-AssDer} and	$H^2_{\h}(A,M)=0$, we obtain the exact sequence
	\begin{align*}
		0 & \rightarrow H^1_{\mathrm{AssDer}} (A, M)\rightarrow \Der(A,M) \rightarrow \Der(A, M)
		\rightarrow  H^2_{\mathrm{AssDer}} (A, M)\rightarrow 0. 	
	\end{align*}
	Thus $\dim H^2_{\mathrm{AssDer}} (A, M)=\dim H^1_{\mathrm{AssDer}} (A, M)$.
	From the exact sequence in Proposition \ref{4-prop-cohomology group-unital-version}, it follows that
	\[\dim H^2_{\gder} (A, M)= \dim H^1_{\gder} (A, M)+\dim M. \]
	(ii) If $H^{n-1}_{\h}(A,M)=H^n_{\h}(A,M)=0$ for some $n\geq 3$, 
	 we have $H^{n}_{\rm AssDer}(A,M)=0$ by Proposition \ref{4-prop-relations-cohomology groups-AssDer}. From the isomorphism in Proposition \ref{4-prop-cohomology group-unital-version}, it follows that
	\[H^n_{\gder}(A, M)\cong  H^n_{\rm AssDer}(A, M)=0.  \]
\end{proof}

Let $(A, h)$ be an AssDer pair and $(M, h_M)$ a module over $(A, h)$. It is easily seen that $(A, h, h)$ is an AsGDer triple and $(M, h_M, h_M)$ is a module over $(A, h, h)$.
To avoid confusion, we use $\C^\bullet _{\gder} (A, M)_{\delta=h}$ to denote the AsGDer complex of $(A, h, h)$ with coefficients in $(M, h_M, h_M)$
and  $H^n_{\gder} (A, M)_{\delta=h}$ to denote the $n$-th cohomology group, respectively. 
In this situation, we have the chain map $\Phi^\bullet \colon \C^\bullet _{\mathrm{AssDer}} (A, M) \to \C^\bullet _{\gder} (A, M)_{\delta=h}$ given by
\[\begin{cases}
	\Phi^1=\Id, & \\
	\Phi^n(f_1,f_2)=(f_1,f_2,f_2), & n\ge 2,
\end{cases}\]
Clearly, $\Theta^n\Phi^n=\Id$.  It follows that the cochain complex
$\C^\bullet _{\gder} (A, M)_{\delta=h}$ is a direct sum of cochian complexes $\C^\bullet _{\mathrm{AssDer}} (A, M)$ and $\ka^\bullet(A,M)$
when $\delta=h$. 
Therefore, we have the following result.

\begin{cor}\label{4-cor-relation-cohomology group-AsGDer-2}
	Retain the above notation.  Then we have 
	$H^1_{\gder}(A, M)_{\delta=h}=H^1_{\mathrm{AssDer}} (A, M)$ and $H^n_{\gder} (A, M)_{\delta=h}=H^n_{\mathrm{AssDer}} (A, M)\oplus H^n(\ka^\bullet(A, M))$ for $n\geq 2$.
\end{cor}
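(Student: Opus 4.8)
The plan is to exploit the fact that, once $\delta=h$, the chain map $\Phi^*$ is a section of $\Theta^*$ \emph{at the level of cochain complexes}, so that the short exact sequence
$0\to\ka^*\to\C^*_{\gder}(A,M)_{\delta=h}\xrightarrow{\Theta^*}\C^*_{\mathrm{AssDer}}(A,M)\to0$
splits in the category of cochain complexes. The whole argument is then a splitting argument together with the fact that cohomology is additive on finite direct sums; there is no spectral sequence or delicate estimate involved.

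First I would record the two structural facts established just above the corollary: $\Phi^*$ is a chain map and $\Theta^n\Phi^n=\Id$. Together these say that $\Phi^*$ is a chain-level splitting of $\Theta^*$. Consequently $\Phi^*\Theta^*$ is an idempotent chain endomorphism of $\C^*_{\gder}(A,M)_{\delta=h}$, since $(\Phi^*\Theta^*)(\Phi^*\Theta^*)=\Phi^*(\Theta^*\Phi^*)\Theta^*=\Phi^*\Theta^*$, and because $\Phi^*$ is injective its kernel is exactly $\ker\Theta^*$, which is the image of $\ka^*$. Hence the chain map $x\mapsto\bigl((\Id-\Phi^*\Theta^*)(x),\,\Theta^*(x)\bigr)$ is an isomorphism of cochain complexes
$\C^*_{\gder}(A,M)_{\delta=h}\cong\ka^*\oplus\C^*_{\mathrm{AssDer}}(A,M)$,
with inverse the inclusion of $\ka^*$ together with $\Phi^*$. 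The point I would stress is that this is an isomorphism of complexes, not merely of graded vector spaces, precisely because all four maps in sight commute with the differentials.

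Next I would pass to cohomology. Since $H^n(-)$ carries finite direct sums of cochain complexes to direct sums, the displayed isomorphism yields $H^n_{\gder}(A,M)_{\delta=h}\cong H^n(\ka^*)\oplus H^n_{\mathrm{AssDer}}(A,M)$ for every $n$. Equivalently one may phrase this through the long exact sequence of Proposition \ref{4-prop-rela-cohomology groups-AsGDer-1}: the presence of the chain section $\Phi^*$ forces every connecting homomorphism to vanish, so that sequence breaks into the split short exact sequences $0\to H^n(\ka^*)\to H^n_{\gder}(A,M)_{\delta=h}\to H^n_{\mathrm{AssDer}}(A,M)\to0$, each split by $H^n(\Phi^*)$. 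I would then separate the two cases: for $n\ge 2$ this is exactly the asserted decomposition, while for $n=1$ the vanishing $\ka^1=0$ gives $H^1(\ka^*)=0$ and collapses the sum to $H^1_{\gder}(A,M)_{\delta=h}=H^1_{\mathrm{AssDer}}(A,M)$.

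The single place where something genuine is used — and the step I would flag as the crux — is the chain-map property of $\Phi^*$, which is where the hypothesis $\delta=h$ is indispensable: the third component of the AsGDer differential in \eqref{4-def of coboundary map n} mixes $\delta$ and $h$, and only when $\delta=h$ does the assignment $(f_1,f_2)\mapsto(f_1,f_2,f_2)$ intertwine $\partial'$ with $\partial$. Once that compatibility is in hand (it is verified where $\Phi^*$ is introduced), everything else is formal homological algebra and presents no further obstacle.
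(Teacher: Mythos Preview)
Your argument is correct and follows exactly the approach the paper takes: the paper observes that $\Phi^*$ is a chain map with $\Theta^n\Phi^n=\Id$, so the short exact sequence $0\to\ka^*\to\C^*_{\gder}(A,M)_{\delta=h}\to\C^*_{\mathrm{AssDer}}(A,M)\to0$ splits at the level of complexes, and the corollary is deduced from this splitting together with $\ka^1=0$. Your write-up simply fleshes out these steps in more detail.
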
 

\begin{exm}
Let $A=\mathbf{M}_{m}(\Bbbk)$ be the full matrix algebra over $\Bbbk$ of order $m$. 
It is well known that  any derivation $h$ on $A$ is inner, 
that is,  there exists an element $a \in A$ such that $h=[a, -]$,
where $[-, -]$ is the commutator. 
In addition, for any generalized $h$-derivation $\delta$,
there exists an element $b\in A$ such that $\delta(x)=bx+[a,x]$ for all $x\in A$.
Next we calculate the AsGDer cohomology group for 
the AsGDer triple $(A, h, \delta)$. 

For any $f\in \Ker\partial$,  we know that $f$ is a derivation and commutes with $h,\delta$. 
Suppose that $f=[y, -]$ for some $y\in A$.
Then $f\circ h=h\circ f$ and $f\circ \delta=\delta\circ f$ imply that
\begin{align*}
	ya=ay, \quad{\rm and}\quad yb=by.	
\end{align*}
It follows that $y\in Z(a)\cap Z(b)$, where $Z(a)$ and $Z(b)$ are the centerlizer of $a$ and $b$ in $A$, respectively.
Note that $[y,-]=[y',-]$ if and only if $y-y'\in Z(A)=\Bbbk I_m$,
where $Z(A)$ is the center of $A$ and $I_m$ is the identity matrix of order $m$.
Therefore, we get $\dim H^1_{\gder}(A, A)=\dim(Z(a)\cap Z(b))-1$,

Since $A$ is semisimple,  it follows from Corollary \ref{4-corollary-case-hoch=0} that 
\[\dim H^2_{\gder} (A, A)=\dim(Z(a)\cap Z(b))-1+m^2, \quad{\rm and} \quad H^n_{\gder} (A, A)=0 \quad {\rm for}\quad n\geq 3. \]
\end{exm}

\subsection{Formal deformations of AsGDer triples}

In this part, we study the formal deformation of an AsGDer triple, and show the AsGDer cohomology groups
control this formal deformation.

Let $(A, \mu)$ be an associative algebra and $(A,h,\delta)$ be an AsGDer triple. 
Let $\Bbbk[[t]]$ and $A[[t]]$ be the formal power series ring in one variable $t$ with coefficients in $\Bbbk$ and $A$, respectively.
Clearly, $A[[t]]$ is a $\mathbb{K}[[t]]$-module. 
Observe that each $\Bbbk[[t]]$-linear map $\varphi_t\colon A[[t]]\to A[[t]]$ is determined by their restriction to the subset $A$. 
Therefore, we may write $\varphi_t(a)=\sum_{i\geq 0}  \varphi_i(a) t^i$ for any $a\in A$, 
where $\varphi_i\colon A \to A$ is $\Bbbk$-linear map for all $i\ge 0$.

A formal deformation of  $(A, h, \delta)$ means an AsGDer triple $(A_t, h_t, \delta_t)$ over $\Bbbk[[t]]$, such that
$A_t$ is a formal deformation of $A$, and 
\begin{align*}
	h_t = & \sum_{i\geq 0}  h_i t^i, ~~~~~ h_i \in \mathrm{Hom} (A, A) ~~ \text{ and } h_0 = h,\\
	\delta_t = & \sum_{i \geq 0}  \delta_i t^i, ~~~~~ \delta_i \in \mathrm{Hom} (A, A) ~~ \text{ and } \delta_0 = \delta.	
\end{align*}
Similarly, we may write the multiplication $\mu_t$ of $A[[t]]$ as 
\[\mu_t(a, b)=\sum_{i\ge 0}\mu_i(a, b)t^i\]
for $a, b\in A$, where $\mu_i\colon A\otimes A \to A$ is $\Bbbk$-linear map for all $i\ge 0$ and $\mu_0=\mu$ is the multiplication of $A$. 

By easy computing, one can obtain the following  deformation equations for AsGDer triples:
\begin{align}
 \Sum_{i+j=n\atop i, j>0} & \mu_i(\mu_j(a, b), c)-\mu_i(a, \mu_j(b, c))=a\mu_n(b, c)-\mu_n(ab, c)+\mu_n(a, bc)-\mu_n(a, b)c, \label{4.4-eq-defor Eq 1}\tag{DE$_n$-1}\\
 \Sum_{i+j=n\atop i, j>0}& h_i(\mu_j(a, b))-\mu_i(h_j(a), b)-\mu_i(a, h_j(b)) \notag\\
&=\mu_n(a, h(b))+\mu_n(h(a), b)-h(\mu_n(a, b))+h_n(a)b+ah_n(b)-h_n(ab) \label{4.4-eq-defor Eq 2}\tag{DE$_n$-2}\\
\Sum_{i+j=n\atop i, j>0}&\delta_i(\mu_j(a, b))-\mu_i(\delta_j(a), b)-\mu_i(a, h_j(b))\notag\\
&=\mu_n(\delta(a), b)+\mu_n(a, h(b))-\delta(\mu_n(a, b))+\delta_n(a)b+ah_n(b)-\delta_n(ab), \label{4.4-eq-defor Eq 3}\tag{DE$_n$-3}
\end{align}
for all $a, b, c\in A$ and $n \ge 1$. 

As we expected, the AsGDer cohomology controls the formal deformation of AsGDer triples. 

The triple $(\mu_1, h_1,\delta_1)$ is called an \textit{infinitesimal deformation} of the AsGDer triple $(A, h, \delta)$,
which gives an AsGDer triple structure on the quotient space $A[[t]]/(t^2)$. 

We denote $\mathrm{Ob}'_n\colon A^{\ot 3}\to A$ and $\mathrm{Ob}''_n, \ob'''_n\colon A^{\ot 2}\to A$ as follows
\begin{align*}
	\mathrm{Ob}'_n(a, b, c)&=\sum_{i+j=n,\atop i,j>0}\mu_i(\mu_j(a, b), c)-\mu_i(a, \mu_j(b, c)),\\
	\mathrm{Ob}''_n(a, b)&=\sum_{i+j=n,\atop i,j>0}h_i(\mu_j(a, b))-\mu_i(h_j(a), b)-\mu_i(a, h_j(b)),\\
	\ob'''_n(a, b)&=\sum_{i+j=n,\atop i,j>0}\delta_i(\mu_j(a, b))-\mu_i(\delta_j(a), b)-\mu_i(a, h_j(b)).
\end{align*}
Being similar to the case of associative algebras, the formal deformation of an AsGDer will meet so-called obstructions 
in the AsGDer cohomology, which are all deducible from the deformation Equations \eqref{4.4-eq-defor Eq 1}, \eqref{4.4-eq-defor Eq 2} and \eqref{4.4-eq-defor Eq 3}.
By a routine check, we have the following result.

\begin{thm}\label{5-thm-deformation thm}
	Let $(A, h, \delta)$ be an AsGDer triple and $H_{\gder}^n(A,A)$ be the $n$-th AsGDer cohomology group of $(A, h, \delta)$.
	\begin{enumerate}
		\item[(i)] The triple $(\mu_1, h_1, \delta_1)$ is an infinitesimal deformation of $(A, h, \delta)$ if and only if 
		$(\mu_1, h_1, \delta_1)$ is a $2$-cocycle in the AsGDer complex of $A$. In particular, if $H_{\gder}^2(A,A)=0$,
		then $(A, h, \delta)$ has no nontrivial formal deformation.
		
		\item[(ii)] For each $n\ge 2$, if the deformation equations {\rm (DE$_k$-1) }, {\rm (DE$_k$-2) } and {\rm (DE$_k$-3) } hold for $k=1, \cdots, n-1$,
		then $(\ob'_n, \ob''_n, \ob'''_n)$ is a $3$-cocycle in the AsGDer complex, and hence $H_{\gder}^3(A,A)$ is the obstruction cocycle. In particular,
		if $H_{\gder}^3(A,A)=0$, then all the obstructions vanish.
	\end{enumerate}	
\end{thm}

\end{document}